\documentclass[12pt]{amsart}
\usepackage{}

\usepackage{amsmath}
\usepackage{amsfonts}
\usepackage{amssymb}
\usepackage[all]{xy}           
\usepackage{bbm}
\usepackage{bbding}
\usepackage{txfonts}
\usepackage{amscd}
\usepackage{tikz}
\usetikzlibrary{matrix}
\usepackage[shortlabels]{enumitem}

\usepackage{ifpdf}
\ifpdf
\usepackage[colorlinks,final,backref=page,hyperindex]{hyperref}
\else
\usepackage[colorlinks,final,backref=page,hyperindex,hypertex]{hyperref}
\fi
\usepackage{tikz}
\usepackage[active]{srcltx}

\topmargin 0cm \textheight 22cm \oddsidemargin 0cm \evensidemargin -0cm \textwidth 16.3cm

\makeatletter

\newtheorem{thm}{Theorem}[section]
\newtheorem{lem}[thm]{Lemma}

\newtheorem{pro}[thm]{Proposition}
\newtheorem{ex}[thm]{Example}
\newtheorem{rmk}[thm]{Remark}
\newtheorem{defi}[thm]{Definition}

\setlength{\baselineskip}{1.8\baselineskip}

\newcommand {\emptycomment}[1]{}

\newcommand{\lon }{\,\rightarrow\,}
\newcommand{\be }{\begin{equation}}
\newcommand{\ee }{\end{equation}}

\newcommand{\g}{\mathfrak g}
\newcommand{\h}{\mathfrak h}

\newcommand{\huaS}{\mathcal{S}}

\newcommand{\huaG}{\mathcal{G}}

\newcommand{\huaC}{{\mathfrak{C}}}

\newcommand{\huaH}{\mathcal{H}}

\newcommand{\huaO}{{\mathcal{O}}}

\newcommand{\frkg}{\mathfrak g}

\newcommand{\half}{\frac{1}{2}}

\newcommand{\Courant}[1]{\left\llbracket  #1\right\rrbracket }

\newcommand{\jetd}{\mathbbm{d}}


\newcommand{\Id}{\rm{Id}}

\newcommand{\p}{\mathbbm{p}}
\newcommand{\br}[1]{   [ \cdot,    \cdot  ]   }
\newcommand{\id}{\mathbbm{i}}

\newcommand{\dM}{\mathrm{d}}

\newcommand{\Hom}{\mathrm{Hom}}

\newcommand{\gl}{\mathfrak {gl}}

\newcommand{\Ker}{\mathrm{Ker}}

\newcommand{\AD}{\mathfrak{ad}}

\newcommand{\ad}{\mathrm{ad}}

\newcommand{\K}{\mathbb{K}}


\newcommand{\Li}{\mathsf{3Lie}}
\newcommand{\cC}{{\rm c}\mathsf{3Lie}}
\begin{document}

\title[Deformations,  cohomologies and abelian extensions of compatible $3$-Lie
algebras]{Deformations,  cohomologies and abelian extensions of compatible $3$-Lie
algebras}

\author{Shuai Hou}
\address{Department of Mathematics, Jilin University, Changchun 130012, Jilin, China}
\email{houshuai19@mails.jlu.edu.cn}

\author{Yunhe Sheng}
\address{Department of Mathematics, Jilin University, Changchun 130012, Jilin, China}
\email{shengyh@jlu.edu.cn}

\author{Yanqiu Zhou}
\address{School of Science, Guangxi University of Science and Technology, Liuzhou 545006, China}
\email{zhouyanqiunihao@163.com}


\begin{abstract}
  In this paper, first we give the notion of a compatible $3$-Lie algebra and construct a  bidifferential graded
Lie algebra whose Maurer-Cartan elements are compatible $3$-Lie algebras. We also obtain the bidifferential graded
Lie algebra that governs deformations of a compatible $3$-Lie algebra. Then we
introduce a cohomology theory of a compatible $3$-Lie algebra with coefficients in itself and show that there is a one-to-one correspondence between equivalent classes of infinitesimal deformations of a compatible $3$-Lie algebra and the second cohomology group. We further study 2-order 1-parameter deformations of a compatible $3$-Lie algebra  and introduce
the notion of a Nijenhuis operator on a compatible $3$-Lie algebra, which could give rise to a trivial deformation.
At last, we introduce a  cohomology theory   of a compatible $3$-Lie algebra  with coefficients in arbitrary representation and classify abelian extensions of a compatible $3$-Lie algebra  using the
second cohomology group.

\end{abstract}

\subjclass[2010]{17B56,   13D10}

\keywords{compatible $3$-Lie algebra, Maurer-Cartan element, cohomology, deformation, abelian extension}

\maketitle

\tableofcontents

\allowdisplaybreaks


\section{Introduction}\label{sec:intr}


Compatible algebraic structures refer to two algebraic structures of the same kind in a linear category such that any linear combination of multiplications corresponding to these two algebraic structures   still defines the same kind of algebraic structure. Compatible algebraic structures have been widely studied in mathematics and mathematical  physics. Golubchik and Sokolov   studied compatible Lie algebras with the background of integrable equations \cite{GS1}, classical Yang-Baxter equations \cite{GS2}, loop algebras over
Lie algebras \cite{GS3}.  Compatible Lie algebras are also related to  elliptic theta functions \cite{OS1}. See   \cite{Pan,S,WB} for more details about classification, operads and bialgebra theory of compatible Lie algebras. Recently, the theory of bidifferential graded Lie algebras was developed in \cite{LSB}, by which  the deformation
theory and the cohomology theory of compatible Lie algebras were established. Then  compatible $L_{\infty}$-algebras
and compatible associative algebras were studied in \cite{DAS,DAS-1} using similar ideas.

The notion of $3$-Lie algebras and more generally, $n$-Lie algebras was introduced in~\cite{Filippov}, and have been
widely studied from different aspects. The $n$-Lie algebra is the algebraic structure corresponding to Nambu mechanics \cite{N}.
$3$-Lie algebras play important roles in the study of the Bagger-Lambert-Gustavsson theory of multiple
M2-branes \cite{M2branes,MFM1}. Cohomology and
deformation  theories of $3$-Lie algebras have been deeply studied in \cite{ABM,Izquierdo,deformation,NR bracket of n-Lie,Takhtajan1,Zhangtao}. See the review article \cite{review,A. Makhlouf} for more details.

The purpose of the paper is to study compatible $3$-Lie algebras. We construct a bidifferential graded Lie algebra such that
compatible $3$-Lie algebras can be characterized by its Maurer-Cartan elements. We introduce a cohomology
theory of a compatible $3$-Lie algebra with coefficients in itself with the help of the aforementioned bidifferential graded Lie algebra.
As applications, we study infinitesimal deformations and $2$-order $1$-parameter deformations of a compatible $3$-Lie algebra respectively. We show that infinitesimal deformations of a compatible $3$-Lie algebra are classified by the second cohomology group. We introduce the notion of a Nijenhuis operator on a compatible 3-Lie algebra and show that a Nijenhuis operator gives rise to a trivial $2$-order $1$-parameter deformation of the compatible $3$-Lie algebra. Moreover, we introduce the cohomology of a compatible $3$-Lie algebra with coefficients in arbitrary
representation, and classify abelian
extensions of a compatible $3$-Lie algebra   by the second cohomology group.

As aforementioned, compatible Lie algebras and 3-Lie algebras both have various applications in mathematical physics, so compatible 3-Lie algebras also have potential applications in mathematical physics, which will be studied in the future. Note that the theory of compatible 3-Lie algebras is not totally parallel to that of compatible Lie algebras, e.g. a Nijenhuis operator on a Lie algebra $(\g,[\cdot,\cdot])$ gives rise to a deformed Lie algebra $(\g,[\cdot,\cdot]_N)$ such that $(\g,[\cdot,\cdot],[\cdot,\cdot]_N)$ is naturally a compatible Lie algebra, while this is not true for 3-Lie algebras. A Nijenhuis operator on a 3-Lie algebra $(\g,[\cdot,\cdot,\cdot])$ also gives rise to a deformed 3-Lie algebra $(\g,[\cdot,\cdot,\cdot]_N)$. But  $(\g,[\cdot,\cdot,\cdot],[\cdot,\cdot,\cdot]_N)$ is in general not a compatible 3-Lie algebra unless extra conditions are satisfied (see Proposition \ref{pro:c3c}).

The paper is organized as follows. In Section \ref{sec:cohomology I}, we introduce the notion of a compatible $3$-Lie algebra, and construct a bidifferential graded Lie algebra
whose  Maurer-Cartan elements are compatible 3-Lie algebra structures. We further construct the  bidifferential graded Lie algebra
governing deformations of a compatible 3-Lie algebra.
In Section \ref{sec:coh}, we introduce the cohomology theory of a compatible $3$-Lie algebra  with coefficients in itself and study infinitesimal deformations.
We also study 2-order 1-parameter deformations of a compatible $3$-Lie algebra and introduce the notion of
a Nijenhuis operator on a compatible $3$-Lie algebra, which could give rise to a trivial deformation.
In Section \ref{sec:cohomology II},
we introduce a cohomology theory of a compatible $3$-Lie algebra with coefficients in arbitrary representation and use the second cohomology group to classify abelian extensions of a compatible $3$-Lie algebra.

\vspace{2mm}
In this paper, all the vector spaces are over algebraically closed field $\mathbb K$ of characteristic $0$, and finite dimensional.
\vspace{2mm}

{\bf Acknowledgements.} This research was  supported by NSFC (11922110).

\section{The bidifferential gLa governing deformations of a compatible $3$-Lie algebra} \label{sec:cohomology I}

In this section, first we construct the bidifferential graded Lie
algebra (gLa) whose Maurer-Cartan elements are compatible $3$-Lie algebra
structures. Then we give the bidifferential graded Lie algebra  governing deformations of a compatible $3$-Lie algebra.
\begin{defi}{\rm (\cite{Filippov})}\label{defi:3Lie}
A {\bf 3-Lie algebra}
is a vector space $\g$ together with a skew-symmetric linear map $[\cdot,\cdot,\cdot]_{\g}:\wedge^{3}\g\rightarrow \g$, such that for $ x_{i}\in \g, 1\leq i\leq 5$, the following {\bf Fundamental Identity} holds:
\begin{eqnarray}
\nonumber\qquad &&[x_1,x_2,[x_3,x_4, x_5]_{\g}]_{\g}\\
&=&[[x_1,x_2, x_3]_{\g},x_4,x_5]_{\g}+[x_3,[x_1,x_2, x_4]_{\g},x_5]_{\g}+[x_3,x_4,[x_1,x_2, x_5]_{\g}]_{\g}.
 \label{eq:jacobi1}
\end{eqnarray}
\end{defi}



Let $\g$ be a vector space. We consider the graded vector space $$C^*(\g,\g)=\oplus_{n\ge 0}C^n(\g,\g)=\oplus_{n\ge 0}\Hom (\underbrace{\wedge^{2} \g\otimes \cdots\otimes \wedge^{2}\g}_{n\geq0}\wedge \g, \g).$$
The degree of elements in $C^n(\g,\g)$ are defined to be $n$. Then the graded vector space $C^*(\g,\g)$ equipped with the   graded commutator bracket
\begin{eqnarray}\label{3-Lie-bracket}
[P,Q]_{\Li}=P{\circ}Q-(-1)^{pq}Q{\circ}P,\quad \forall~ P\in C^{p}(\g,\g),Q\in C^{q}(\g,\g),
\end{eqnarray}
is a graded Lie algebra \cite{NR bracket of n-Lie}, where $P{\circ}Q\in C^{p+q}(\g,\g)$ is defined by

{\footnotesize
\begin{equation*}
\begin{aligned}
&(P{\circ}Q)(\mathfrak{X}_1,\cdots,\mathfrak{X}_{p+q},x)\\
=&\sum_{k=1}^{p}(-1)^{(k-1)q}\sum_{\sigma\in \huaS(k-1,q)}(-1)^\sigma P(\mathfrak{X}_{\sigma(1)},\cdots,\mathfrak{X}_{\sigma(k-1)},
Q(\mathfrak{X}_{\sigma(k)},\cdots,\mathfrak{X}_{\sigma(k+q-1)},x_{k+q})\wedge y_{k+q},\mathfrak{X}_{k+q+1},\cdots,\mathfrak{X}_{p+q},x)\\
&+\sum_{k=1}^{p}(-1)^{(k-1)q}\sum_{\sigma\in \huaS(k-1,q)}(-1)^\sigma P(\mathfrak{X}_{\sigma(1)},\cdots,\mathfrak{X}_{\sigma(k-1)},x_{k+q}\wedge
Q(\mathfrak{X}_{\sigma(k)},\cdots,\mathfrak{X}_{\sigma(k+q-1)},y_{k+q}),\mathfrak{X}_{k+q+1},\cdots,\mathfrak{X}_{p+q},x)\\
&+\sum_{\sigma\in \huaS(p,q)}(-1)^{pq}(-1)^\sigma P(\mathfrak{X}_{\sigma(1)},\cdots,\mathfrak{X}_{\sigma(p)},
Q(\mathfrak{X}_{\sigma(p+1)},\cdots,\mathfrak{X}_{\sigma(p+q-1)},\mathfrak{X}_{\sigma(p+q)},x)),\\
\end{aligned}
\end{equation*}
}
where   $\mathfrak{X}_{i}=x_i\wedge y_i\in \wedge^2 \g$, $i=1,2,\cdots,p+q$ and $x\in\g.$

The following result is well known.
\begin{pro}\label{pro:3LieMC}
Let $\g$ be a vector space. Then $\pi \in C^{1}(\g,\g)=\Hom(\wedge^3\g,\g)$ defines a $3$-Lie algebra structure on $\g$ if and only if $\pi$ is a Maurer-Cartan element of the graded Lie algebra $(C^*(\g,\g),[\cdot,\cdot]_{\Li})$, i.e.
satisfies the Maurer-Cartan equation $[\pi ,\pi]_{\Li}=0$.
Moreover, $(C^*(\g,\g),[\cdot,\cdot]_{\Li},{\rm d}_{\pi})$ is a differential graded Lie algebra, where ${\rm d}_{\pi}$ is defined by
\begin{eqnarray}
{\rm d}_\pi:=[\pi,\cdot]_{\Li}.
\end{eqnarray}
\end{pro}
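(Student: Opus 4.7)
The plan is to verify both assertions by direct computation, relying on the explicit formula for $\circ$ specialized to $p=q=1$. The graded Lie algebra structure itself is cited from \cite{NR bracket of n-Lie}, so I would take $(C^*(\g,\g),[\cdot,\cdot]_{\Li})$ as given and only check the Maurer-Cartan translation and the DGLA axioms.

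First I would specialize the formula for $P\circ Q$ to $P=Q=\pi\in C^1(\g,\g)=\Hom(\wedge^3\g,\g)$. With $p=q=1$, the outer sums collapse: $k$ runs only over $\{1\}$, the shuffles in $\huaS(0,1)$ are trivial, and the shuffles in $\huaS(1,1)$ consist of the identity and the transposition. Evaluating $(\pi\circ\pi)(\mathfrak X_1,\mathfrak X_2,x)$ with $\mathfrak X_i=x_i\wedge y_i$ I expect to obtain, up to signs,
\begin{eqnarray*}
(\pi\circ\pi)(\mathfrak X_1,\mathfrak X_2,x)
&=&\pi(\pi(x_1,y_1,x_2),y_2,x)+\pi(x_2,\pi(x_1,y_1,y_2),x)\\
&&-\,\pi(x_1,y_1,\pi(x_2,y_2,x))+\pi(x_2,y_2,\pi(x_1,y_1,x)).
\end{eqnarray*}
Since $\pi$ has degree $1$, the bracket reduces to $[\pi,\pi]_{\Li}=2\pi\circ\pi$. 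Relabelling $(x_1,y_1,x_2,y_2,x)=(x_1,x_2,x_3,x_4,x_5)$ and writing $\pi=[\cdot,\cdot,\cdot]_{\g}$, the four terms above are exactly the four summands of the Fundamental Identity \eqref{eq:jacobi1} (after moving one term to the opposite side). Thus $[\pi,\pi]_{\Li}=0$ is equivalent to $\pi$ being a 3-Lie bracket.

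For the DGLA claim I would argue abstractly from the graded Jacobi identity for $[\cdot,\cdot]_{\Li}$ rather than unwinding the operadic formula again. The derivation property
\[
{\rm d}_\pi[P,Q]_{\Li}=[{\rm d}_\pi P,Q]_{\Li}+(-1)^{p}[P,{\rm d}_\pi Q]_{\Li}
\]
for $P\in C^p(\g,\g)$ is an immediate rewriting of graded Jacobi applied to $(\pi,P,Q)$, using that $|\pi|=1$. For ${\rm d}_\pi^2=0$, graded Jacobi gives
\[
[\pi,[\pi,P]_{\Li}]_{\Li}=[[\pi,\pi]_{\Li},P]_{\Li}-[\pi,[\pi,P]_{\Li}]_{\Li},
\]
so $2\,{\rm d}_\pi^2 P=[[\pi,\pi]_{\Li},P]_{\Li}=0$ by the Maurer-Cartan equation just established.

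The only real obstacle is bookkeeping in the first step: matching the shuffle signs and the indices of the $x_i,y_i$ with the five entries appearing in \eqref{eq:jacobi1}, and being careful that the $(-1)^{pq}=-1$ in front of the third sum of the $\circ$-formula produces the correct relative signs between the ``inner'' and ``outer'' terms. Once that bookkeeping is done, both assertions follow; the DGLA part is then formal and uses no further calculation with the defining formula for $\circ$.
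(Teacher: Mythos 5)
Your proposal is correct. The paper gives no proof of this proposition (it is stated as well known, with the graded Lie algebra structure cited from Rotkiewicz), so there is nothing to diverge from; your argument supplies exactly the computation the paper implicitly relies on. In particular, your specialization of the $\circ$-formula to $p=q=1$ agrees with the explicit five-variable expansion of $[\pi_1,\pi_2]_{\Li}$ that the paper writes out in the proof of Theorem \ref{pro:lsymNR} (take $\pi_1=\pi_2=\pi$ there), and the formal derivation of ${\rm d}_\pi^2=0$ and the Leibniz rule from the graded Jacobi identity is the standard route.
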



Next we introduce the notion of a compatible $3$-Lie algebra.
\begin{defi} A {\bf compatible $3$-Lie algebra} is a triple $(\mathfrak{g},[\cdot,\cdot,\cdot],\{\cdot,\cdot,\cdot\})$, where $\g$ is a vector space,
$[\cdot,\cdot,\cdot] $ and $ \{\cdot,\cdot,\cdot\} $ are $3$-Lie algebra structures on $\g$
such that
\begin{eqnarray}\label{eq:cl}
[x_1,x_2,\{x_3,x_4,x_5\}]+\{x_1,x_2,[x_3,x_4,x_5]\}
&=&[\{x_1,x_2,x_3\},x_4,x_5]+\{[x_1,x_2,x_3],x_4,x_5\}\\
\nonumber&&[x_3,\{x_1,x_2,x_4\},x_5]+\{x_3,[x_1,x_2,x_4],x_5\}\\
\nonumber&&[x_3,x_4,\{x_1,x_2,x_5\}]+\{x_3,x_4,[x_1,x_2,x_5]\},
\end{eqnarray}
for all $x_i\in \g, 1\leq i\leq 5.$
\end{defi}

\begin{ex}
Let $(\g,[\cdot,\cdot,\cdot])$ and $(\g,\{\cdot,\cdot,\cdot\})$ be two $4$-dimensional $3$-Lie algebras. The two $3$-Lie brackets are given by $[e_1,e_2,e_3]=e_1$ and $\{e_2,e_3,e_4\}=e_1$ for a given
basis $\{e_1, e_2, e_3, e_4\}$ respectively. Then $(\g,[\cdot,\cdot,\cdot],\{\cdot,\cdot,\cdot\})$ is a compatible $3$-Lie algebra.
\end{ex}

\begin{pro}
A triple $(\mathfrak{g},[\cdot,\cdot,\cdot],\{\cdot,\cdot,\cdot\})$ is a compatible $3$-Lie
algebra if and only if
   $ [\cdot,\cdot,\cdot]$ and
$ \{\cdot,\cdot,\cdot\}$ are $3$-Lie algebra structures on $\g$ such that for all
$k_1,k_2\in\mathbb{K}$, the following trilinear operation
\begin{equation}\label{eq:Courant bracket}
\Courant{x,y,z}=k_1[x,y,z]+k_2\{x,y,z\},\quad \forall x,y,z\in\mathfrak{g}
\end{equation}
defines a $3$-Lie algebra structure on $\mathfrak{g}$.\end{pro}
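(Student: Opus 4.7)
The plan is to exploit bilinearity of the Fundamental Identity \eqref{eq:jacobi1} in its two ``bracket slots''. Since skew-symmetry of $\Courant{\cdot,\cdot,\cdot}$ is immediate from skew-symmetry of both summands, the content lies entirely in the Fundamental Identity for $\Courant{\cdot,\cdot,\cdot}$. First I would substitute $\Courant{x,y,z}=k_1[x,y,z]+k_2\{x,y,z\}$ into each of the four triple brackets appearing in the Fundamental Identity for $\Courant{\cdot,\cdot,\cdot}$, both at the outer and at each inner position; each bracket then splits into two pieces, and the whole expression becomes a polynomial in $(k_1,k_2)$ whose only monomials are $k_1^2$, $k_1 k_2$ and $k_2^2$.

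Next I would collect coefficients. The $k_1^2$-coefficient is exactly the Fundamental Identity for $[\cdot,\cdot,\cdot]$, and the $k_2^2$-coefficient is that for $\{\cdot,\cdot,\cdot\}$. The $k_1 k_2$-coefficient gathers those terms in which exactly one occurrence of $\Courant{\cdot,\cdot,\cdot}$ has been expanded as $[\cdot,\cdot,\cdot]$ and the remaining ones as $\{\cdot,\cdot,\cdot\}$ (or vice versa); after collecting, this coefficient reassembles into precisely the compatibility equation \eqref{eq:cl}. Hence the Fundamental Identity for $\Courant{\cdot,\cdot,\cdot}$ holding for all $k_1,k_2$ is equivalent to all three coefficient identities holding.

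The forward direction is then immediate: assuming the compatible $3$-Lie axioms, all three coefficient identities are satisfied, so $\Courant{\cdot,\cdot,\cdot}$ is $3$-Lie for every $k_1,k_2\in\mathbb K$. For the converse I would specialize $(k_1,k_2)$ first to $(1,0)$ and $(0,1)$ to extract the Fundamental Identities for each of $[\cdot,\cdot,\cdot]$ and $\{\cdot,\cdot,\cdot\}$, and then to $(1,1)$: subtracting the two previously obtained identities isolates the $k_1 k_2$-coefficient and yields \eqref{eq:cl}.

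The only real obstacle is the bookkeeping required to verify that the $k_1 k_2$-coefficient matches the six-term right-hand side of \eqref{eq:cl}. This is streamlined conceptually by invoking Proposition \ref{pro:3LieMC}: setting $\pi_1=[\cdot,\cdot,\cdot]$ and $\pi_2=\{\cdot,\cdot,\cdot\}$ as elements of $C^1(\g,\g)$, the Maurer-Cartan equation for $k_1\pi_1+k_2\pi_2$ expands as $k_1^2[\pi_1,\pi_1]_{\Li}+2k_1k_2[\pi_1,\pi_2]_{\Li}+k_2^2[\pi_2,\pi_2]_{\Li}=0$, which in the presence of $[\pi_i,\pi_i]_{\Li}=0$ reduces to $[\pi_1,\pi_2]_{\Li}=0$; a direct unpacking of this last identity via the definition of $\circ$ reproduces \eqref{eq:cl} on the nose.
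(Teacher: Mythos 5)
Your proposal is correct and is exactly the computation the paper has in mind when it dismisses the proof as ``straightforward'': expanding the Fundamental Identity for $\Courant{\cdot,\cdot,\cdot}$ bilinearly, the $k_1^2$- and $k_2^2$-coefficients are the Fundamental Identities of the two brackets, the $k_1k_2$-coefficient is precisely \eqref{eq:cl}, and specializing $(k_1,k_2)$ to $(1,0)$, $(0,1)$, $(1,1)$ gives the converse. Your reformulation via $[k_1\pi_1+k_2\pi_2,k_1\pi_1+k_2\pi_2]_{\Li}=k_1^2[\pi_1,\pi_1]_{\Li}+2k_1k_2[\pi_1,\pi_2]_{\Li}+k_2^2[\pi_2,\pi_2]_{\Li}$ is also consistent with how the paper later identifies \eqref{eq:cl} with $[\pi_1,\pi_2]_{\Li}=0$ in Theorem \ref{pro:lsymNR}.
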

\begin{proof}
It is straightforward.
\end{proof}

\begin{defi}
  A {\bf derivation} of a compatible $3$-Lie algebra $(\g,[\cdot,\cdot,\cdot],\{\cdot,\cdot,\cdot\})$ is a linear map $D:\g\rightarrow\g$ such that
  \begin{eqnarray*}
f([x,y,z])&=&[f(x),y,z]+[x,f(y),z]+[x,y,f(z)],\\ f(\{x,y,z\})&=&\{f(x),y,z\}+\{x,f(y),z\}+\{x,y,f(z)\}, \quad \forall x,y,z\in \g.
 \end{eqnarray*}
\end{defi}

\begin{defi}
  A {\bf homomorphism} between two compatible $3$-Lie algebras $(\g,[\cdot,\cdot,\cdot]_\g,\{\cdot,\cdot,\cdot\}_\g)$ and $(\h,[\cdot,\cdot,\cdot]_\h,\{\cdot,\cdot,\cdot\}_\h)$ is a linear map $\varphi:\g\rightarrow \mathfrak{\h}$ such that $\varphi$ is both a $3$-Lie algebra homomorphism between $(\g,[\cdot,\cdot,\cdot]_\g)$ and $(\h,[\cdot,\cdot,\cdot]_\h)$ and a $3$-Lie algebra homomorphism between $(\g,\{\cdot,\cdot,\cdot\}_\g)$ and $(\h,\{\cdot,\cdot,\cdot\}_\h)$.
\end{defi}

\begin{defi} \cite{LSB}
  Let $(\huaG,[\cdot,\cdot],{\rm d}_1)$ and $(\huaG,[\cdot,\cdot],{\rm d}_2)$ be two differential graded Lie algebras, where $\huaG=\oplus_{i=0}^\infty \g_i$. We call the quadruple $(\huaG,[\cdot,\cdot],{\rm d}_1,{\rm d}_2)$ a {\bf bidifferential graded Lie algebra} if ${\rm d}_1$ and ${\rm d}_2$
  satisfy
  \begin{equation}
  {\rm d}_1\circ {\rm d}_2+{\rm d}_2\circ {\rm d}_1=0.
  \end{equation}
\end{defi}

\begin{defi} \cite{LSB}
Let $(\huaG,[\cdot,\cdot],{\rm d}_1,{\rm d}_2)$ be a bidifferential graded Lie algebra. A pair $(P_1,P_2)\in \g_1\oplus \g_1$ is called a {\bf Maurer-Cartan element} of the bidifferential graded Lie algebra $(\huaG,[\cdot,\cdot],{\rm d}_1,{\rm d}_2)$ if $P_1$ and $P_2$ are Maurer-Cartan elements of the  differential graded Lie algebras $(\huaG,[\cdot,\cdot],{\rm d}_1)$ and  $(\huaG,[\cdot,\cdot],{\rm d}_2)$ respectively, and
\begin{eqnarray}
{\rm d}_1 P_2+{\rm d}_2 P_1 +[P_1,P_2]=0.
\end{eqnarray}
\end{defi}

Let $(\huaG,[\cdot,\cdot])$ be a graded Lie algebra. It is obvious that $(\huaG,[\cdot,\cdot],{\rm d}_1=0,{\rm d}_2=0)$ is a bidifferential graded Lie algebra. Consider the graded Lie algebra $(C^*(\g,\g),[\cdot,\cdot]_{\Li})$, we obtain the following main result.

\begin{thm}\label{pro:lsymNR}
  Let $\g$ be a vector space, $\pi_1,\pi_2\in\Hom(\wedge^3\g,\g)$. Then $(\g,\pi_1,\pi_2)$ is a compatible $3$-Lie algebra if and only if $(\pi_1,\pi_2)$ is a Maurer-Cartan element of the bidifferential graded Lie algebra $(C^*(\g,\g),[\cdot,\cdot]_{\Li}, {\rm d}_1=0,{\rm d}_2=0)$.
\end{thm}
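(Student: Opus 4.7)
The plan is to unwind the Maurer-Cartan conditions for the bidifferential gLa when both differentials vanish and to match what remains against the compatibility axiom \eqref{eq:cl}. Since ${\rm d}_1={\rm d}_2=0$, the three MC equations for the pair $(\pi_1,\pi_2)$ collapse to
\begin{eqnarray*}
[\pi_1,\pi_1]_{\Li}=0,\qquad [\pi_2,\pi_2]_{\Li}=0,\qquad [\pi_1,\pi_2]_{\Li}=0.
\end{eqnarray*}
By Proposition \ref{pro:3LieMC}, the first two equations are equivalent to saying that $\pi_1$ and $\pi_2$ are $3$-Lie algebra structures on $\g$. Hence the only remaining task is to verify that the cross-bracket equation $[\pi_1,\pi_2]_{\Li}=0$ is the same as the compatibility axiom \eqref{eq:cl}.

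The main step is therefore a direct computation of $[\pi_1,\pi_2]_{\Li}=\pi_1\circ\pi_2+\pi_2\circ\pi_1$, noting that the sign $-(-1)^{pq}=+1$ for $p=q=1$ makes the bracket symmetric in $\pi_1$ and $\pi_2$. To carry this out, I evaluate $(\pi_1\circ\pi_2)(\mathfrak{X}_1,\mathfrak{X}_2,x)$ with $\mathfrak{X}_1=x_1\wedge x_2$, $\mathfrak{X}_2=x_3\wedge x_4$, $x=x_5$, using the formula defining $\circ$. For $p=q=1$ only $k=1$ contributes to the first two sums, and the third sum runs over the two $(1,1)$-shuffles. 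Writing $\pi_1=[\cdot,\cdot,\cdot]$ and $\pi_2=\{\cdot,\cdot,\cdot\}$, one obtains
\begin{eqnarray*}
(\pi_1\circ\pi_2)(x_1,\ldots,x_5)&=&[\{x_1,x_2,x_3\},x_4,x_5]+[x_3,\{x_1,x_2,x_4\},x_5]\\
&&+[x_3,x_4,\{x_1,x_2,x_5\}]-[x_1,x_2,\{x_3,x_4,x_5\}],
\end{eqnarray*}
and the analogous expression for $\pi_2\circ\pi_1$ with the roles of $[\cdot,\cdot,\cdot]$ and $\{\cdot,\cdot,\cdot\}$ exchanged. Adding the two and transposing the two negative terms to the left-hand side recovers precisely equation \eqref{eq:cl}, completing the equivalence.

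The main obstacle is the careful bookkeeping of shuffle signs and index ranges in the Nijenhuis--Richardson-type product for $3$-Lie algebras, which is combinatorially heavier than its ordinary Lie algebra counterpart because each slot of a cochain is an element of $\wedge^2\g$ rather than $\g$; once the $p=q=1$ case has been expanded correctly, the identification with \eqref{eq:cl} is immediate and the proof is otherwise straightforward.
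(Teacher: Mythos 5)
Your proposal is correct and follows essentially the same route as the paper: reduce the Maurer--Cartan conditions (with vanishing differentials) to the three bracket equations $[\pi_1,\pi_1]_{\Li}=[\pi_2,\pi_2]_{\Li}=[\pi_1,\pi_2]_{\Li}=0$, invoke Proposition \ref{pro:3LieMC} for the first two, and expand the cross-bracket $[\pi_1,\pi_2]_{\Li}=\pi_1\circ\pi_2+\pi_2\circ\pi_1$ to identify it with the compatibility condition \eqref{eq:cl}. Your expansion of $(\pi_1\circ\pi_2)(x_1,\dots,x_5)$ agrees term-by-term (including signs) with the expression the paper records for $[\pi_1,\pi_2]_{\Li}$, so nothing is missing.
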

\begin{proof}
By Proposition \ref{pro:3LieMC},
$\pi_1,\pi_2\in\Hom(\wedge^3\g,\g)$ define two $3$-Lie algebra structures
on $\g$ respectively  if and only if
$$
[\pi_1,\pi_1]_\Li=0,\quad[\pi_2,\pi_2]_\Li=0.
$$
Moreover, we have
\begin{eqnarray*}
&&[\pi_1,\pi_2]_{\Li}(x_1,x_2,x_3,x_4,x_5)\\
&=&\pi_1(\pi_2(x_1,x_2,x_3),x_4,x_5)+\pi_1(x_3,\pi_2(x_1,x_2,x_4),x_5)+\pi_1(x_3,x_4,\pi_2(x_1,x_2,x_5))\\
&&+\pi_2(\pi_1(x_1,x_2,x_3),x_4,x_5)+\pi_2(x_3,\pi_1(x_1,x_2,x_4),x_5)+\pi_2(x_3,x_4,\pi_1(x_1,x_2,x_5))\\
&&-\pi_1(x_1,x_2,\pi_2(x_3,x_4,x_5))-\pi_2(x_1,x_2,\pi_1(x_3,x_4,x_5)).
\end{eqnarray*}
Therefore the compatibility condition (\ref{eq:cl}) is equivalent to $[\pi_1,\pi_2]_{\Li}=0$.
  Thus, $(\g,\pi_1,\pi_2)$ is a compatible $3$-Lie algebra if and only if
 \begin{equation}\label{eq:Maurer-Cartatn 1}
 [\pi_1,\pi_1]_{\Li}=0,\quad [\pi_1,\pi_2]_{\Li}=0,\quad[\pi_2,\pi_2]_{\Li}=0,
 \end{equation}
which means that  $(\pi_1,\pi_2)$ is a Maurer-Cartan element of the bidifferential graded Lie algebra $(C^*(\g,\g),[\cdot,\cdot]_{\Li}, {\rm d}_1=0,{\rm d}_2=0)$.
\end{proof}

Now we are ready to give a new bidifferential graded Lie algebra that controls deformations of a compatible $3$-Lie algebra.

\begin{thm}\label{pro:new differential Lie algebra}
Let $(\g,\pi_1,\pi_2)$ be a compatible $3$-Lie algebra.  Then
$(C^*(\g,\g),[\cdot,\cdot]_\Li,
{\rm d}_{\pi_1},{\rm d}_{\pi_2})$ is a bidifferential graded
Lie algebra, where ${\rm d}_{\pi_1}$ and ${\rm d}_{\pi_2}$
are  respectively  defined by
\begin{eqnarray*}
{\rm d}_{\pi_1}P ={[\pi_1,P]}_{\Li},\quad{\rm d}_{\pi_2}P ={[\pi_2,P]}_{\Li},\quad\forall~P\in C^p(\g,\g).
\end{eqnarray*}

Moreover, for all
$\tilde{\pi}_1,\tilde{\pi}_2\in\Hom(\wedge^3\g,\g)$,
$(\g,\pi_1+\tilde{\pi}_1,\pi_2+\tilde{\pi}_2)$ is a compatible $3$-Lie algebra if and only if the pair
$(\tilde{\pi}_1,\tilde{\pi}_2)$ is a Maurer-Cartan element of the
bidifferential graded Lie algebra $(C^*(\g,\g),[\cdot,\cdot]_\Li,$
${\rm d}_{\pi_1},{\rm d}_{\pi_2})$.

 \end{thm}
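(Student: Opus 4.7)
The plan is to verify the two assertions in turn, both reducing to algebraic manipulations of the graded Lie bracket $[\cdot,\cdot]_\Li$ together with the compatible $3$-Lie algebra identities \eqref{eq:Maurer-Cartatn 1} established in Theorem \ref{pro:lsymNR}.

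First I would show that $(C^*(\g,\g),[\cdot,\cdot]_\Li,{\rm d}_{\pi_1},{\rm d}_{\pi_2})$ is a bidifferential gLa. The three required identities ${\rm d}_{\pi_1}^2=0$, ${\rm d}_{\pi_2}^2=0$, and ${\rm d}_{\pi_1}\circ{\rm d}_{\pi_2}+{\rm d}_{\pi_2}\circ{\rm d}_{\pi_1}=0$ can be handled uniformly. Since $\pi_1,\pi_2$ are of degree $1$, the graded Jacobi identity yields, for each $i,j\in\{1,2\}$ and any $P\in C^p(\g,\g)$,
\begin{equation*}
[\pi_i,[\pi_j,P]_\Li]_\Li=[[\pi_i,\pi_j]_\Li,P]_\Li-[\pi_j,[\pi_i,P]_\Li]_\Li,
\end{equation*}
so that ${\rm d}_{\pi_i}\circ{\rm d}_{\pi_j}+{\rm d}_{\pi_j}\circ{\rm d}_{\pi_i}=[[\pi_i,\pi_j]_\Li,\cdot]_\Li$. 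By Theorem \ref{pro:lsymNR} applied to $(\g,\pi_1,\pi_2)$, each of $[\pi_1,\pi_1]_\Li$, $[\pi_2,\pi_2]_\Li$, $[\pi_1,\pi_2]_\Li$ vanishes, which immediately delivers the three identities.

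Next I would prove the Maurer-Cartan characterization. By Theorem \ref{pro:lsymNR}, the triple $(\g,\pi_1+\tilde\pi_1,\pi_2+\tilde\pi_2)$ is a compatible $3$-Lie algebra if and only if
\begin{equation*}
[\pi_1+\tilde\pi_1,\pi_1+\tilde\pi_1]_\Li=0,\quad [\pi_2+\tilde\pi_2,\pi_2+\tilde\pi_2]_\Li=0,\quad [\pi_1+\tilde\pi_1,\pi_2+\tilde\pi_2]_\Li=0.
\end{equation*}
Expanding by bilinearity and using that $[\cdot,\cdot]_\Li$ restricted to $C^1(\g,\g)\otimes C^1(\g,\g)$ is graded-symmetric (since $(-1)^{1\cdot 1}=-1$ combined with the convention in \eqref{3-Lie-bracket} gives $[P,Q]_\Li=[Q,P]_\Li$ on degree $1$), the three equations become
\begin{equation*}
2[\pi_1,\tilde\pi_1]_\Li+[\tilde\pi_1,\tilde\pi_1]_\Li=0,\quad 2[\pi_2,\tilde\pi_2]_\Li+[\tilde\pi_2,\tilde\pi_2]_\Li=0,
\end{equation*}
\begin{equation*}
[\pi_1,\tilde\pi_2]_\Li+[\pi_2,\tilde\pi_1]_\Li+[\tilde\pi_1,\tilde\pi_2]_\Li=0,
\end{equation*}
after the terms $[\pi_i,\pi_j]_\Li$ are dropped by \eqref{eq:Maurer-Cartatn 1}. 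Rewriting $[\pi_i,\tilde\pi_j]_\Li={\rm d}_{\pi_i}\tilde\pi_j$, this is exactly the Maurer-Cartan system for the pair $(\tilde\pi_1,\tilde\pi_2)$ in the bidifferential gLa constructed above.

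The proof is essentially formal; the main place where care is needed is tracking signs, in particular the graded symmetry of $[\cdot,\cdot]_\Li$ on $C^1\otimes C^1$ (which explains the coefficient $2$ on the cross terms) and the corresponding absence of the prefactor $\frac{1}{2}$ in the mixed Maurer-Cartan equation. No subtle obstruction is expected beyond bookkeeping.
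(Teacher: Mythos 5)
Your proposal is correct and follows essentially the same route as the paper: the paper likewise reduces the bidifferential identities to the vanishing of $[\pi_i,\pi_j]_\Li$ via the graded Jacobi identity (citing Proposition \ref{pro:3LieMC} for ${\rm d}_{\pi_i}^2=0$ and computing the anticommutator directly for the mixed case), and proves the Maurer--Cartan characterization by expanding $[\pi_i+\tilde\pi_i,\pi_j+\tilde\pi_j]_\Li$ exactly as you do. Your explicit justification of the factor $2$ via the symmetry of $[\cdot,\cdot]_\Li$ on degree-one elements is a detail the paper leaves implicit, but it is not a different argument.
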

\begin{proof}
By Proposition \ref{pro:3LieMC}, $(\g,[\cdot,\cdot]_\Li,{\rm d}_{\pi_1})$ and $(\g,[\cdot,\cdot]_\Li,{\rm d}_{\pi_2})$ are differential graded Lie algebras.
Since $[\pi_1,\pi_2]_{\Li}=0,$  we have
\begin{eqnarray*}
{\rm d}_{\pi_1}{\rm d}_{\pi_2}P+{\rm d}_{\pi_2}{\rm d}_{\pi_1}P=[[\pi_1,\pi_2]_{\Li},P]_{\Li}=0,\quad P\in C^p(\g,\g),
\end{eqnarray*}
which implies that ${\rm d}_{\pi_1}{\rm d}_{\pi_2}+{\rm d}_{\pi_2}{\rm d}_{\pi_1}=0.$
Therefore $(C^*(\g,\g),[\cdot,\cdot]_\Li,
{\rm d}_{\pi_1},{\rm d}_{\pi_2})$ is a bidifferential graded
Lie algebra.

If $(\g,\pi_1+\tilde{\pi}_1,\pi_2+\tilde{\pi}_2)$ is a compatible $3$-Lie algebra, by  Theorem \ref{pro:lsymNR}, $(\pi_1+\tilde{\pi}_1,\pi_2+\tilde{\pi}_2)$ is a Maurer-Cartan element of the bidifferential graded Lie algebra $(C^*(\g,\g),[\cdot,\cdot]_{\Li},{\rm d}_1=0,{\rm d}_2=0)$, then we have
\begin{eqnarray*}
  [\pi_1+\tilde{\pi}_1,\pi_1+\tilde{\pi}_1]_{\Li}&=&0,\\
\nonumber[\pi_2+\tilde{\pi}_2,\pi_2+\tilde{\pi}_2]_{\Li}&=&0,\\
\nonumber[\pi_1+\tilde{\pi}_1,\pi_2+\tilde{\pi}_2]_{\Li}&=&0.
\end{eqnarray*}
Furthermore, since $(\pi_1,\pi_2)$ is a Maurer-Cartan element of the bidifferential graded Lie algebra $(C^*(\g,\g),[\cdot,\cdot]_{\Li},{\rm d}_1=0,{\rm d}_2=0)$, we have
\begin{eqnarray*}
 {\rm d}_{\pi_1}\tilde{\pi}_1+\frac{1}{2}[\tilde{\pi}_1,\tilde{\pi}_1]_{\Li}&=&0,\\
\nonumber {\rm d}_{\pi_2}\tilde{\pi}_2+\frac{1}{2}[\tilde{\pi}_2,\tilde{\pi}_2]_{\Li}&=&0,\\
\nonumber {\rm d}_{\pi_1}\tilde{\pi}_2+{\rm d}_{\pi_2}\tilde{\pi}_1+[\tilde{\pi}_1,\tilde{\pi}_2]_{\Li}&=&0.
\end{eqnarray*}
Thus $(\tilde{\pi}_1,\tilde{\pi}_2)$ is a Maurer-Cartan element of the
bidifferential graded Lie algebra $(C^*(\g,\g),[\cdot,\cdot]_\Li,$
${\rm d}_{\pi_1},{\rm d}_{\pi_2})$. The converse also holds. We omit the details.
\end{proof}

\section{Cohomologies, infinitesimal deformations and Nijenhuis operators}\label{sec:coh}

In this section, we introduce a cohomology theory of a compatible 3-Lie algebra  with coefficients in itself, and show that infinitesimal deformations can be characterized by the second cohomology group. We also introduce the notion of a Nijenhuis operator on a compatible 3-Lie algebra, which gives rise to a trivial 2-order 1-parameter deformation.
\subsection{Cohomologies of compatible $3$-Lie algebras}
  First, we recall the cohomology theory of $3$-Lie algebras.
Let $(\g,[\cdot,\cdot,\cdot])$ be a $3$-Lie algebra with $\pi(x,y,z)=[x,y,z]$. Denote by
$$\huaC_{\Li}^{n}(\g;\g)=
\Hom (\underbrace{\wedge^{2} \g\otimes \cdots\otimes \wedge^{2}\g}_{n-1}\wedge \g,\g),~(n\geq 1),$$ which is the space of $n$-cochains.
Define the coboundary operator $\dM^n_\pi:\huaC_{\Li}^{n}(\g,\g)\rightarrow \huaC_{\Li}^{n+1}(\g,\g)$ by
$$\dM^n_\pi\omega =(-1)^{n-1}[\pi,\omega]_\Li,\quad \forall~\omega\in \huaC_{\Li}^{n}(\g,\g).$$
More precisely, for all $~\mathfrak{X}_{i}=x_{i}\wedge y_{i}\in \wedge^{2}\g,~i=1,2,\cdots,n$~and~$x_{n+1}\in \g$, we have
\begin{eqnarray*}
&&\dM^n_\pi\omega(\mathfrak{X}_1,\cdots,\mathfrak{X}_n,x_{n+1})\\
&=&\sum_{1\leq j<k\leq n}(-1)^{j} \omega\Big(\mathfrak{X}_1,\cdots,\hat{\mathfrak{X}_{j}},\cdots,\mathfrak{X}_{k-1},
\pi(x_j,y_j,x_k)\wedge y_k+x_k\wedge\pi(x_j,y_j,y_k),
\mathfrak{X}_{k+1},\cdots,\mathfrak{X}_{n},x_{n+1}\Big)\\
&&+\sum_{j=1}^{n}(-1)^{j}\omega\Big(\mathfrak{X}_1,\cdots,\hat{\mathfrak{X}_{j}},\cdots,\mathfrak{X}_{n},
\pi(x_j,y_j,x_{n+1})\Big)\\&&
+\sum_{j=1}^{n}(-1)^{j+1}\pi\Big(x_j,y_j,\omega(\mathfrak{X}_1,\cdots,\hat{\mathfrak{X}_{j}},
\cdots,\mathfrak{X}_{n},x_{n+1})\Big)\\&&
+(-1)^{n+1}\Big(\pi(y_n,x_{n+1},\omega(\mathfrak{X}_1,\cdots,\mathfrak{X}_{n-1},x_n))+\pi(x_{n+1},x_n,\omega(\mathfrak{X}_1,\cdots,\mathfrak{X}_{n-1},y_n))\Big).
\end{eqnarray*}
Because of the graded Jacobi
identity, we can deduce that $\dM^{n+1}_\pi\circ\dM^n_\pi=0.$ Thus, $(\oplus_{n=1}^{+\infty}\huaC_{\Li}^{n}(\g;\g),{\dM}_{\pi}^*)$ is a cochain complex, whose cohomology is defined to be the cohomology of 3-Lie algebras. See  \cite{Izquierdo,review} for more details.

Next  we introduce a cohomology
theory of compatible $3$-Lie algebras.
Let $(\mathfrak{g},[\cdot,\cdot,\cdot],\{\cdot,\cdot,\cdot\})$ be a compatible $3$-Lie algebra with $\pi_1(x,y,z)=[x,y,z]$ and $\pi_2(x,y,z)=\{x,y,z\}$. By Theorem \ref{pro:lsymNR},  $(\pi_1,\pi_2)$ is a Maurer-Cartan element of the bidifferential graded Lie algebra $(C^*(\g,\g),[\cdot,\cdot]_{\Li},{\rm d}_1=0,{\rm d}_2=0)$.
Define the space of $n$-cochains $\huaC_{\cC}^n(\g,\g), (n\geq1)$ by
$$\huaC_{\cC}^n(\g,\g)=\underbrace{\huaC_{\Li}^{n}(\g,\g)\oplus \huaC_{\Li}^{n}(\g,\g)\cdots\oplus \huaC_{\Li}^{n}(\g,\g)}_{n~{copies}}.$$


Define $\jetd^1:\huaC_{\cC}^{1}(\g,\g)\longrightarrow \huaC_{\cC}^{2}(\g,\g)$  by
$$\jetd^1 f=([\pi_1, f]_\Li,[\pi_2, f]_\Li),\quad \forall f\in\Hom(\g,\g).$$

Define the coboundary operator $\jetd^n:\huaC_{\cC}^{n}(\g,\g)\longrightarrow \huaC_{\cC}^{n+1}(\g,\g)$ for $n>1$ by
\begin{eqnarray*}
    \jetd^n(\omega_1,\cdots,\omega_{n})=(-1)^{n-1}([\pi_1,\omega_1]_\Li,\cdots,\underbrace{[\pi_2,\omega_{i-1}]_\Li+[\pi_1,\omega_i]_\Li}_i,\cdots,[\pi_2,\omega_{n}]_\Li),
\end{eqnarray*}
where $(\omega_1,\omega_2,\cdots,\omega_n)\in\huaC_{\cC}^n(\g,\g)$ and $2\leq i\leq n$.

The definition of $\jetd^*$ can be presented by the following diagram:
\begin{eqnarray*}
\huaC_{\cC}^{1}(\g,\g)\stackrel{\jetd^1}{\longrightarrow}\huaC_{\cC}^{2}(\g,\g)\stackrel{\jetd^2}{\longrightarrow}\huaC_{\cC}^{3}(\g,\g) \stackrel{\jetd^3}{\longrightarrow}\cdots
\end{eqnarray*}
{\footnotesize{\begin{tikzpicture}[>=stealth,sloped]
    \matrix (tree) [%
      matrix of nodes,
      minimum size=0.2cm,
      column sep=1.6cm,
      row sep=0.2cm,
    ]
    {     & & & &$\huaC_{\Li}^4(\g,\g)$ &\\
          & &  & $\huaC_{\Li}^3(\g,\g)$ & &\\
          & &$\huaC_{\Li}^2(\g,\g)$ &   & $\huaC_{\Li}^4(\g,\g)$ &\\
     & $\huaC_{\Li}^1(\g,\g)$  &  & $\huaC_{\Li}^3(\g,\g)$ & &$\cdots$\\
         & & $\huaC_{\Li}^2(\g,\g)$ &   & $\huaC_{\Li}^4(\g,\g)$ &\\
         & &   & $\huaC_{\Li}^3(\g,\g)$ & &\\
         & & & & $\huaC_{\Li}^4(\g,\g)$ &\\
    };
    \draw[->] (tree-4-2) -- (tree-3-3) node [midway,above] {$[\pi_1,\cdot]_\Li$};
    \draw[->] (tree-4-2) -- (tree-5-3) node [midway,below] {$[\pi_2,\cdot]_\Li$};
    \draw[->] (tree-3-3) -- (tree-2-4) node [midway,above] {$-[\pi_1,\cdot]_\Li$};
    \draw[->] (tree-3-3) -- (tree-4-4) node [midway,above] {$-[\pi_2,\cdot]_\Li$};
    \draw[->] (tree-5-3) -- (tree-4-4) node [midway,above] {$-[\pi_1,\cdot]_\Li$};
    \draw[->] (tree-5-3) -- (tree-6-4) node [midway,below] {$-[\pi_2,\cdot]_\Li$};
    \draw[->] (tree-2-4) -- (tree-1-5) node [midway,above] {$[\pi_1,\cdot]_\Li$};
    \draw[->] (tree-2-4) -- (tree-3-5) node [midway,above] {$[\pi_2,\cdot]_\Li$};
    \draw[->] (tree-4-4) -- (tree-3-5) node [midway,above] {$[\pi_1,\cdot]_\Li$};
    \draw[->] (tree-4-4) -- (tree-5-5) node [midway,above] {$[\pi_2,\cdot]_\Li$};
    \draw[->] (tree-6-4) -- (tree-5-5) node [midway,above] {$[\pi_1,\cdot]_\Li$};
    \draw[->] (tree-6-4) -- (tree-7-5) node [midway,below] {$[\pi_2,\cdot]_\Li$};
  \end{tikzpicture}}}

\begin{thm}\label{thm:cohomology of CLA}
  With the above notations, we have $\jetd^{n+1}\circ\jetd^n=0$ $(n\geq1)$, i.e. $(\huaC_{\cC}^*(\g,\g)=\oplus_{n=1}^\infty \huaC_{\cC}^n(\g,\g),\jetd^*)$ is a cochain complex.
\end{thm}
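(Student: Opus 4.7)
The plan is to derive the claim from the three structural identities
\[
d_{\pi_1}^2=0,\quad d_{\pi_2}^2=0,\quad d_{\pi_1}d_{\pi_2}+d_{\pi_2}d_{\pi_1}=0
\]
supplied by Theorem \ref{pro:new differential Lie algebra}, where $d_{\pi_a}=[\pi_a,\cdot]_\Li$. Conceptually, $\huaC_\cC^n(\g,\g)$ should be viewed as the $(n-1)$-th total-degree piece of the bicomplex whose $(p,q)$ entry is $\huaC_\Li^{p+q+1}(\g,\g)$, with $d_{\pi_1}$ and $d_{\pi_2}$ as the two anticommuting differentials and the $i$-th slot corresponding to bidegree $(i-1,n-i)$; the operator $\jetd^n$ is then, up to the overall sign $(-1)^{n-1}$, the associated total differential. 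From this viewpoint $\jetd^{n+1}\circ\jetd^n=0$ is just the standard fact that the total differential of such a bicomplex squares to zero.

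To make this concrete, I would verify $\jetd^{n+1}\circ\jetd^n=0$ by computing the $n+2$ components of $\jetd^{n+1}\jetd^n(\omega_1,\ldots,\omega_n)$ one at a time. Writing $\eta=\jetd^n(\omega_1,\ldots,\omega_n)$, the interior components (for $2\le i\le n+1$) read
\[
(-1)^n\bigl([\pi_2,\eta_{i-1}]_\Li+[\pi_1,\eta_i]_\Li\bigr),
\]
and substituting $\eta_{i-1}=(-1)^{n-1}([\pi_2,\omega_{i-2}]_\Li+[\pi_1,\omega_{i-1}]_\Li)$ together with $\eta_i=(-1)^{n-1}([\pi_2,\omega_{i-1}]_\Li+[\pi_1,\omega_i]_\Li)$, using the boundary convention $\omega_0=\omega_{n+1}=0$, produces
\[
-\bigl(d_{\pi_2}^2\omega_{i-2}+(d_{\pi_1}d_{\pi_2}+d_{\pi_2}d_{\pi_1})\omega_{i-1}+d_{\pi_1}^2\omega_i\bigr),
\]
which vanishes term by term by the three identities above. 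The extreme components collapse immediately to $-d_{\pi_1}^2\omega_1=0$ and $-d_{\pi_2}^2\omega_n=0$.

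The only real obstacle is the sign bookkeeping: one must check that the two cross-terms $[\pi_2,[\pi_1,\omega_{i-1}]_\Li]_\Li$ and $[\pi_1,[\pi_2,\omega_{i-1}]_\Li]_\Li$ arising in slot $i$ appear with equal coefficients, so that they assemble into the anticommutator $d_{\pi_1}d_{\pi_2}+d_{\pi_2}d_{\pi_1}$ rather than into a commutator. Because each application of $\jetd$ contributes a single global sign depending only on $n$, and not alternating signs across the entries of the tuple, these cross-terms automatically share the coefficient $(-1)^n(-1)^{n-1}=-1$, and the compatibility identity $d_{\pi_1}d_{\pi_2}+d_{\pi_2}d_{\pi_1}=0$ applies directly. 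Once an interior slot is verified, the same computation specialized to $i=1$ and $i=n+2$ handles the boundary slots, and the initial case $\jetd^2\circ\jetd^1=0$ is the $n=1$ instance of the same calculation.
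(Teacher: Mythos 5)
Your proof is correct and follows essentially the same route as the paper: a direct component-wise computation of $\jetd^{n+1}\circ\jetd^n$ in which the interior slot $i$ collapses to $-\bigl(\mathrm{d}_{\pi_2}^2\omega_{i-2}+(\mathrm{d}_{\pi_1}\mathrm{d}_{\pi_2}+\mathrm{d}_{\pi_2}\mathrm{d}_{\pi_1})\omega_{i-1}+\mathrm{d}_{\pi_1}^2\omega_i\bigr)$, and your sign analysis (overall factor $(-1)^n(-1)^{n-1}=-1$, uniform across slots) matches the paper's. The only cosmetic difference is that you quote the operator identities $\mathrm{d}_{\pi_1}^2=\mathrm{d}_{\pi_2}^2=\mathrm{d}_{\pi_1}\mathrm{d}_{\pi_2}+\mathrm{d}_{\pi_2}\mathrm{d}_{\pi_1}=0$ from Theorem \ref{pro:new differential Lie algebra}, whereas the paper re-derives them in place via the graded Jacobi identity and the Maurer--Cartan equations \eqref{eq:Maurer-Cartatn 1}.
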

\begin{proof}
By \eqref{eq:Maurer-Cartatn 1} and the graded Jacobi identity, for $(\omega_1,\omega_2,\cdots,\omega_n)\in\huaC_{\cC}^n(\g,\g),~n\ge 1$, we have
\begin{eqnarray*}
    &&\jetd^{n+1}\jetd^n(\omega_1,\cdots,\omega_{n})\\
    &=&(-1)^{n-1}\jetd^{n+1}([\pi_1,\omega_1]_\Li,\cdots,\underbrace{[\pi_2,\omega_{i-1}]_\Li+[\pi_1,\omega_i]_\Li}_i,\cdots,[\pi_2,\omega_{n}]_\Li)\\
    &=&-([\pi_1,[\pi_1,\omega_1]_\Li]_\Li,[\pi_2,[\pi_1,\omega_1]_\Li]_\Li+[\pi_1,[\pi_2,\omega_1]_\Li]_\Li+[\pi_1,[\pi_1,\omega_2]_\Li]_\Li,\cdots,\\
    &&\underbrace{[\pi_2,[\pi_2,\omega_{i-2}]_\Li]_\Li+[\pi_2,[\pi_1,\omega_{i-1}]_\Li]_\Li+[\pi_1,[\pi_2,\omega_{i-1}]_\Li]_\Li+[\pi_1,[\pi_1,\omega_{i}]_\Li]_\Li}_{3\leq i\leq n-1},\cdots,\\
&&[\pi_2,[\pi_2,\omega_{n-1}]_\Li]_\Li+[\pi_2,[\pi_1,\omega_n]_\Li]_\Li+[\pi_1,[\pi_2,\omega_n]_\Li]_\Li,[\pi_2,[\pi_2,\omega_n]_\Li]_\Li)\\
    &=&-(\half[[\pi_1,\pi_1]_\Li,\omega_1]_\Li,[[\pi_1,\pi_2]_\Li,\omega_1]_\Li+\half [[\pi_1,\pi_1]_\Li,\omega_2]_\Li,\cdots,\\
    &&\underbrace{\half[[\pi_2,\pi_2]_\Li,\omega_{i-2}]_\Li+[[\pi_1,\pi_2]_\Li,\omega_{i-1}]_\Li+\half[[\pi_1,\pi_1]_\Li,\omega_i]_\Li}_{3\leq i\leq n-1},\cdots,\\
    &&\half[[\pi_2,\pi_2]_\Li,\omega_{n-1}]_\Li+[[\pi_1,\pi_2]_\Li,\omega_n]_\Li,\half[[\pi_2,\pi_2]_\Li,\omega_n]_\Li)\\
    &=&(0,0,\cdots,0).
    \end{eqnarray*}
    Thus we have $\jetd^{n+1}\circ\jetd^n=0$.\end{proof}
\begin{defi} Let $(\mathfrak{g},[\cdot,\cdot,\cdot],\{\cdot,\cdot,\cdot\})$ be a compatible $3$-Lie
algebra.
  The cohomology of the cochain complex $(\huaC_{\cC}^*(\g,\g),\jetd^*)$  is  called {\bf the cohomology of   $(\g,[\cdot,\cdot,\cdot],\{\cdot,\cdot,\cdot\})$}.  We denote the $n$-th cohomology group by  $\huaH^n(\g,\g)$.
\end{defi}

  It is clearly that $f\in\Hom(\g,\g)$ is a $1$-cocycle if and only if   $f$ is a derivation  on the compatible $3$-Lie algebra $(\g,[\cdot,\cdot,\cdot],\{\cdot,\cdot,\cdot\})$.

\subsection{Infinitesimal deformations of compatible $3$-Lie algebras}

In this subsection, we study infinitesimal deformations of compatible $3$-Lie algebras using the cohomology theory of compatible $3$-Lie algebras.
\emptycomment{
Let $(\g,[\cdot,\cdot,\cdot])$ be a $3$-Lie algebra over $\mathbb R$ and $\mathbb R[t]$ be the polynomial ring in one variable $t.$
Then $\mathbb R[t]/(t^2)\otimes_{\mathbb R}\g$ is an $\mathbb R[t]/(t^2)$-module, moreover, $\mathbb R[t]/(t^2)\otimes_{\mathbb R}\g$ is a $3$-Lie algebra over $\mathbb R[t]/(t^2)$, where the compatible $3$-Lie algebra structure is defined by
\begin{eqnarray*}
[f_1(t)\otimes x,f_2(t)\otimes y,f_3(z)\otimes y]&=&f_1(t)f_2(t)f_3(t)\otimes_{\mathbb R}[x,y,z],\\
\{f_1(t)\otimes x,f_2(t)\otimes y,f_3(z)\otimes y\}&=&f_1(t)f_2(t)f_3(t)\otimes_{\mathbb R}\{x,y,z\},\quad
f_{i}(t)\in \mathbb R[t]/(t^2),1\leq i\leq 3,x,y,z\in \g.
\end{eqnarray*}

In the sequel, all the vector spaces are finite dimensional vector spaces over $\mathbb R$ and we denote $f(t)\otimes_{\mathbb R} x$ by $f(t)x,$ where $f(t)\in \mathbb R[t]/(t^2).$}

\begin{defi}
  Let $(\mathfrak{g},[\cdot,\cdot,\cdot],\{\cdot,\cdot,\cdot\})$ be a compatible $3$-Lie algebra,
$\omega_1,\omega_2\in \Hom(\wedge^3\g,\g)$.
Define
\begin{equation}
[x,y,z]_t=[x,y,z]+t\omega_1(x,y,z),\;\;\{x,y,z\}_t=\{x,y,z\}+t\omega_2(x,y,z),\;\;\forall
x,y,z\in \mathfrak{g}.
\end{equation}
If for any $t$, $(\mathfrak{g},[\cdot,\cdot,\cdot]_t,\{\cdot,\cdot,\cdot\}_t)$ is still a
compatible $3$-Lie algebra structure modulo $t^2$, then we say that $(\omega_1,\omega_2)$
  generates  an {\bf infinitesimal deformation} of
$(\mathfrak{g},[\cdot,\cdot,\cdot],\{\cdot,\cdot,\cdot\})$.
\end{defi}
It is straightforward to verify that $(\omega_1,\omega_2)$
generates an infinitesimal deformation of a
compatible $3$-Lie algebra
 $(\frkg,[\cdot,\cdot,\cdot],\{\cdot,\cdot,\cdot\})$ if and only if for any $k_1,k_2\in
 \mathbb K$, $k_1\omega_1+k_2\omega_2$ generates an infinitesimal
 deformation of the $3$-Lie algebra  $(\frak g,
 \Courant{\cdot,\cdot,\cdot}=k_1[\cdot,\cdot,\cdot]+k_2\{\cdot,\cdot,\cdot\})$.

We set
$$\pi_1(x,y,z)=[x,y,z],\quad \pi_2(x,y,z)=\{x,y,z\}.$$
By Theorem \ref{pro:lsymNR},  $(\g,[\cdot,\cdot,\cdot]_t,\{\cdot,\cdot,\cdot\}_t)$ is an infinitesimal deformation of $(\g,\pi_1,\pi_2)$  if and only if
$$
{}[{\pi}_1,{\omega}_1]_{\Li}= 0,\quad [{\pi}_1,{\omega}_2]_{\Li}+[{\pi}_2,{\omega}_1]_{\Li}=0,\quad\hfill[{\pi}_2,{\omega}_2]_{\Li}=0,$$
 i.e.
$\jetd^2(\omega_1,\omega_2)=0$, which means that
$(\omega_1,\omega_2)\in \huaC_{\cC}^2(\g,\g)$  is a $2$-cocycle for the
compatible $3$-Lie algebra $(\mathfrak{g},[\cdot,\cdot,\cdot],\{\cdot,\cdot,\cdot\})$.

\begin{defi}
Two infinitesimal deformations $(\g,[\cdot,\cdot,\cdot]_t,\{\cdot,\cdot,\cdot\}_t)$ and
$(\g,[\cdot,\cdot,\cdot]_t',\{\cdot,\cdot,\cdot\}_t')$ of a compatible $3$-Lie algebra
$(\g,[\cdot,\cdot,\cdot],\{\cdot,\cdot,\cdot\})$ generated by $(\omega_1,\omega_2)$ and
$(\omega'_1,\omega'_2) $   respectively  are said to be
{\bf equivalent} if there exists $N\in \gl(\g)$ such that
${\Id}+tN: (\g,[\cdot,\cdot,\cdot]_t,\{\cdot,\cdot,\cdot\}_t)\longrightarrow
(\g,[\cdot,\cdot,\cdot]_t',\{\cdot,\cdot,\cdot\}_t')$ is a compatible $3$-Lie algebra homomorphism modulo $t^2$.
\end{defi}

Two infinitesimal deformations $(\g,[\cdot,\cdot,\cdot]_t,\{\cdot,\cdot,\cdot\}_t)$ and
$(\g,[\cdot,\cdot,\cdot]_t',\{\cdot,\cdot,\cdot\}_t')$ generated by $(\omega_1,\omega_2)$ and
$(\omega'_1,\omega'_2) $    respectively  are equivalent
if and only if
\begin{align}
\omega_1(x,y,z)=&\omega_1'(x,y,z)+[Nx,y,z]+[x,Ny,z]+[x,y,Nz]-N([x,y,z]),\label{2-exact1}\\
\omega_2(x,y,z)=&\omega_2'(x,y,z)+\{Nx,y,z\}+\{x,Ny,z\}+\{x,y,Nz\}-N(\{x,y,z\}).\label{2-exact2}
\end{align}
It is easy to see that $(\ref{2-exact1})$ and $(\ref{2-exact2})$ mean that $(\omega_1- \omega_1',\omega_2-\omega_2')=\jetd^1 N$.

We summarize the above discussion by the following
 theorem.
\begin{thm}\label{thm:deformation}
Let $(\g,[\cdot,\cdot,\cdot]_t,\{\cdot,\cdot,\cdot\}_t)$ be an infinitesimal deformation  of a compatible $3$-Lie algebra $(\g,[\cdot,\cdot,\cdot],\{\cdot,\cdot,\cdot\})$ generated by $(\omega_1,\omega_2)$. Then $(\omega_1,\omega_2)$ is
closed, i.e. $\jetd^2(\omega_1,\omega_2)=0.$

Furthermore,  two infinitesimal deformations
$(\g,[\cdot,\cdot,\cdot]_t,\{\cdot,\cdot,\cdot\}_t)$ and $(\g,[\cdot,\cdot,\cdot]_t',\{\cdot,\cdot,\cdot\}_t')$ of a
compatible $3$-Lie algebra $(\g,[\cdot,\cdot,\cdot],\{\cdot,\cdot,\cdot\})$ generated by
$(\omega_1,\omega_2)$ and $(\omega'_1,\omega'_2) $
respectively  are equivalent if and only if $(\omega_1,\omega_2)$ and
$(\omega_1',\omega'_2)$ are in the same cohomology class in
$\huaH^2(\g,\g)$.
\end{thm}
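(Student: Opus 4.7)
The plan is to organize the observations already sketched in the two paragraphs preceding the statement into a streamlined argument. Both assertions reduce to extracting the coefficient of $t$ from an identity required to hold modulo $t^{2}$ and then interpreting the result through the Maurer-Cartan dictionary established in Theorem \ref{pro:lsymNR}.

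For the closedness statement, I would start by demanding that $(\g,[\cdot,\cdot,\cdot]_t,\{\cdot,\cdot,\cdot\}_t)$ be a compatible $3$-Lie algebra modulo $t^{2}$. By Theorem \ref{pro:lsymNR}, this amounts to the three Maurer-Cartan equations $[\pi_1+t\omega_1,\pi_1+t\omega_1]_\Li=0$, $[\pi_2+t\omega_2,\pi_2+t\omega_2]_\Li=0$, and $[\pi_1+t\omega_1,\pi_2+t\omega_2]_\Li=0$ holding modulo $t^{2}$. The $t^{0}$ terms recover the Maurer-Cartan equations for $(\pi_1,\pi_2)$ and are automatic; the coefficients of $t^{1}$ give
\[
[\pi_1,\omega_1]_\Li=0,\quad [\pi_2,\omega_2]_\Li=0,\quad [\pi_1,\omega_2]_\Li+[\pi_2,\omega_1]_\Li=0,
\]
which, by the definition of $\jetd^{2}$, is precisely the single statement $\jetd^{2}(\omega_1,\omega_2)=0$.

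For the equivalence part, I would write the homomorphism condition for $\Id+tN$ modulo $t^{2}$ as the pair of identities
\begin{align*}
(\Id+tN)[x,y,z]_t &\equiv [(\Id+tN)x,(\Id+tN)y,(\Id+tN)z]_t',\\
(\Id+tN)\{x,y,z\}_t &\equiv \{(\Id+tN)x,(\Id+tN)y,(\Id+tN)z\}_t',
\end{align*}
expand both sides, cancel the matching $t^{0}$ terms (which agree because the undeformed brackets coincide), and read off the $t^{1}$ coefficient. This yields precisely relations \eqref{2-exact1}--\eqref{2-exact2}, which translate to the cochain-level identity $(\omega_1-\omega_1',\omega_2-\omega_2')=\jetd^{1}N$. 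Combined with the closedness from the first part, this is exactly the statement that $(\omega_1,\omega_2)$ and $(\omega_1',\omega_2')$ define the same class in $\huaH^{2}(\g,\g)$, proving both directions simultaneously.

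The only step requiring real care is the sign bookkeeping when translating the explicit Fundamental Identity and the compatibility condition \eqref{eq:cl} into the graded commutator $[\cdot,\cdot]_\Li$. However, the dictionary was already made explicit in the proof of Theorem \ref{pro:lsymNR} through the formula for $[\pi_1,\pi_2]_\Li$ evaluated on a $5$-tuple, and the computation of $[\pi_i,\omega_j]_\Li$ for a $3$-cochain $\omega_j$ is formally identical (replace one of the Maurer-Cartan elements by $\omega_j$). Hence no new obstacle arises beyond careful invocation of the preceding results.
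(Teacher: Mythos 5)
Your proposal is correct and follows essentially the same route as the paper, which ``summarizes the above discussion'' into this theorem: extracting the $t^{1}$-coefficients of the Maurer--Cartan equations for $(\pi_1+t\omega_1,\pi_2+t\omega_2)$ to get $\jetd^{2}(\omega_1,\omega_2)=0$, and of the homomorphism condition for $\Id+tN$ to get $(\omega_1-\omega_1',\omega_2-\omega_2')=\jetd^{1}N$. No substantive difference from the paper's argument.
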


\subsection{2-order 1-parameter deformations of compatible $3$-Lie algebras}
In this subsection, we study 2-order 1-parameter deformations of compatible $3$-Lie algebras,  and introduce the notion of a Nijenhuis operator on a compatible $3$-Lie algebra, which could generate a trivial deformation.

\emptycomment{
\begin{defi}
Two 2-order 1-parameter deformations $(\g,[\cdot,\cdot,\cdot]_t,\{\cdot,\cdot,\cdot\}_t)$ and
$(\g,[\cdot,\cdot,\cdot]_t',\{\cdot,\cdot,\cdot\}_t')$ of a compatible $3$-Lie algebra
$(\g,[\cdot,\cdot,\cdot],\{\cdot,\cdot,\cdot\})$ generated by $(\omega_1,\tilde{\omega}_1,\omega_2,\tilde{\omega}_2)$ and
$(\omega'_1,\tilde{\omega}'_1,\omega'_2,\tilde{\omega}'_2)$ respectively are said to be
{\bf equivalent} if there exists a linear map $N:\g\rightarrow\g$ such that
${\Id}+tN: (\g,[\cdot,\cdot,\cdot]_t,\{\cdot,\cdot,\cdot\}_t)\longrightarrow
(\g,[\cdot,\cdot,\cdot]_t',\{\cdot,\cdot,\cdot\}_t')$ is a compatible $3$-Lie algebra homomorphism.
\end{defi}}

First we recall Nijenhuis operators on 3-Lie algebras.
Let $(\g,[\cdot,\cdot,\cdot])$ be a $3$-Lie algebra. We also use  $\pi:\wedge^3\g\rightarrow \g$ to indicate the $3$-Lie bracket $[\cdot,\cdot,\cdot]$, i.e. $\pi(x,y,z)=[x,y,z].$

We use $ T_{\pi}N:\g\otimes\g\otimes\g\rightarrow\g$ to denote the Nijenhuis torsion of $N$ defined by
\begin{eqnarray*}
  T_{\pi}N:=\frac{1}{2}[[[\pi,N]_{\Li},N]_{\Li},N]_{\Li}-\frac{1}{2}[[\pi,N^2]_{\Li},N]_{\Li}-[[\pi,N]_{\Li},N^2]_{\Li}+[\pi,N^3]_{\Li}.
\end{eqnarray*}
More precisely,
\begin{eqnarray*}
  T_{\pi}N(x,y,z)&=&3[Nx,Ny,Nz]-3N\Big([Nx,Ny,z]+[x,Ny,Nz]+[Nx,y,Nz]\\
  &&-N[Nx,y,z]-N[x,Ny,z]-N[x,y,Nz]+N^2[x,y,z]\Big).
\end{eqnarray*}

\begin{defi}{\rm(\cite{Liu-Jie-Feng})} Let $(\g,[\cdot,\cdot,\cdot])$ be a $3$-Lie algebra. A linear map $N:\g\longrightarrow
\g$ is called a {\bf Nijenhuis operator}   on
$(\g,[\cdot,\cdot,\cdot])$ if  $T_{\pi}N=0$, i.e.
\begin{eqnarray}\label{3-Lie-Nijenhuis}
[Nx,Ny,Nz]&=&N\Big([Nx,Ny,z]+[x,Ny,Nz]+[Nx,y,Nz]\\
\nonumber&&-N[Nx,y,z]-N[x,Ny,z]-N[x,y,Nz]+N^2[x,y,z]\Big), \quad \forall  x,y,z\in \g.
\end{eqnarray}
\end{defi}
\begin{pro}{\rm(\cite{Liu-Jie-Feng})}\label{Nijenhuis-property}
Let $N:\g\longrightarrow\g$ be a Nijenhuis operator on the $3$-Lie algebra $(\g,[\cdot,\cdot,\cdot])$. Then $(\g,[\cdot,\cdot,\cdot]_{N})$ is also a $3$-Lie algebra which is called the {\bf deformed $3$-Lie algebra}, where the bracket $[\cdot,\cdot,\cdot]_{N}$ is given by
\begin{eqnarray}
\label{Nijenhuis-operator}[x,y,z]_{N}&=&[Nx,Ny,z]+[x,Ny,Nz]+[Nx,y,Nz]\\
\nonumber&&-N[Nx,y,z]-N[x,Ny,z]-N[x,y,Nz]+N^2[x,y,z], \quad \forall x,y,z\in \g.
\end{eqnarray}
Moreover,  $N$ is a $3$-Lie algebra homomorphism from $(\g,[\cdot,\cdot,\cdot]_{N})$ to $(\g,[\cdot,\cdot,\cdot])$.
\end{pro}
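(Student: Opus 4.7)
The plan is to establish three things: skew-symmetry of $[\cdot,\cdot,\cdot]_N$, the homomorphism identity $N[x,y,z]_N=[Nx,Ny,Nz]$, and the Fundamental Identity for $[\cdot,\cdot,\cdot]_N$. Skew-symmetry in $(x,y,z)$ is immediate from \eqref{Nijenhuis-operator}, since each of the seven summands inherits skew-symmetry from $[\cdot,\cdot,\cdot]$. The homomorphism identity is essentially free: applying $N$ to both sides of \eqref{Nijenhuis-operator} produces exactly the right-hand side of the Nijenhuis condition \eqref{3-Lie-Nijenhuis}, and therefore equals $[Nx,Ny,Nz]$.

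For the Fundamental Identity my plan is to work inside the graded Lie algebra $(C^*(\g,\g),[\cdot,\cdot]_\Li)$ of Section~\ref{sec:cohomology I}. Write $\pi(x,y,z)=[x,y,z]\in C^1(\g,\g)$, view $N\in C^0(\g,\g)=\Hom(\g,\g)$, and set $\pi_N(x,y,z):=[x,y,z]_N$. Unfolding the definition of $[\cdot,\cdot]_\Li$ and using the formula $[\pi,N]_\Li(x,y,z)=\pi(Nx,y,z)+\pi(x,Ny,z)+\pi(x,y,Nz)-N\pi(x,y,z)$ (and its iterate with $N$ replaced by $N^2$), I expect the key identity
\begin{equation*}
\pi_N \;=\; \tfrac{1}{2}[[\pi,N]_\Li,N]_\Li - \tfrac{1}{2}[\pi,N^2]_\Li
\end{equation*}
to drop out after collecting like terms. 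By Proposition~\ref{pro:3LieMC}, the Fundamental Identity for $\pi_N$ is equivalent to $[\pi_N,\pi_N]_\Li=0$. Expanding this bracket bilinearly using the above identity and applying the graded Jacobi identity to each of the resulting nested brackets, I expect the outcome to reorganize into a combination of $[\pi,\pi]_\Li$ (which vanishes by the Fundamental Identity for $\pi$), the trivially zero commutators $[N,N^k]_\Li$, and the Nijenhuis torsion $T_\pi N$. Since $T_\pi N=0$ by hypothesis, this gives $[\pi_N,\pi_N]_\Li=0$.

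The hard part will be the graded-Jacobi bookkeeping in the last step: one has to recognize the specific combination $\tfrac{1}{2}[[[\pi,N]_\Li,N]_\Li,N]_\Li - \tfrac{1}{2}[[\pi,N^2]_\Li,N]_\Li - [[\pi,N]_\Li,N^2]_\Li + [\pi,N^3]_\Li$ defining $T_\pi N$ emerging from the simplification, which relies on handling signs for degree-$0$ and degree-$1$ elements correctly and on the auxiliary identity $[[\pi,N^2]_\Li,N]_\Li = [[\pi,N]_\Li,N^2]_\Li$ (a one-line consequence of graded Jacobi together with $[N,N^2]_\Li=0$). A concrete fallback, should the graded-Jacobi route become unwieldy, is to expand the Fundamental Identity for $\pi_N$ as a polynomial in $\pi$ and $N$ and cancel the resulting terms by repeated use of \eqref{eq:jacobi1} and \eqref{3-Lie-Nijenhuis}; this is considerably longer but requires no graded-Lie-algebra machinery.
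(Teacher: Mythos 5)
The paper itself offers no proof of this proposition (it is quoted from \cite{Liu-Jie-Feng}), so your attempt can only be measured against the standard argument in that reference. Your first two steps are fine: skew-symmetry holds (though, to be precise, the individual summands of \eqref{Nijenhuis-operator} are not separately skew-symmetric --- only the three grouped sums are, each being a symmetrization over the placements of $N$), and applying $N$ to \eqref{Nijenhuis-operator} and invoking \eqref{3-Lie-Nijenhuis} does give $N[x,y,z]_N=[Nx,Ny,Nz]$ at once. The identity $\pi_N=\tfrac12([[\pi,N]_\Li,N]_\Li-[\pi,N^2]_\Li)$ is also correct; it is exactly the paper's \eqref{deformed-3-Lie-pi}.

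The gap is in the central step, the Fundamental Identity for $\pi_N$. Your claim that expanding $[\pi_N,\pi_N]_\Li$ by graded Jacobi ``reorganizes into a combination of $[\pi,\pi]_\Li$, the commutators $[N,N^k]_\Li$, and $T_\pi N$'' does not even typecheck as stated: $[\pi_N,\pi_N]_\Li$ has degree $2$, while $T_\pi N$ has degree $1$ and $[N,N^k]_\Li$ degree $0$, so any such reduction must involve further brackets of $T_\pi N$ against degree-one elements, and producing that expression is the whole content of the proof. Concretely, writing $A=[\pi,N]_\Li$, the graded Jacobi expansion of $2[\pi_N,\pi_N]_\Li=[\pi_N,[A,N]_\Li]_\Li-[\pi_N,[\pi,N^2]_\Li]_\Li$ generates the term $[[\pi_N,A]_\Li,N]_\Li$, and the vanishing of $[A,\pi_N]_\Li$ is itself a nontrivial consequence of the Nijenhuis condition, not a formal consequence of $T_\pi N=0$ and graded Jacobi; your sketch never addresses it. The paper's own Proposition \ref{pro:c3c} is the warning sign: it computes $[\pi,\pi_N]_\Li=-\tfrac12[A,A]_\Li$, which is generically nonzero even for Nijenhuis $N$, so the mixed brackets do not vanish formally and the bookkeeping you defer is exactly where the difficulty lives. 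The clean route --- the one in \cite{Liu-Jie-Feng} --- is to observe that \eqref{3-Lie-Nijenhuis} is precisely the top coefficient of the identity
\begin{equation*}
[({\Id}+tN)x,({\Id}+tN)y,({\Id}+tN)z]=({\Id}+tN)\big([x,y,z]+tA(x,y,z)+t^2[x,y,z]_N\big),
\end{equation*}
valid for all $t$; hence for the cofinitely many $t$ with ${\Id}+tN$ invertible, $\pi+tA+t^2\pi_N$ is the pullback of $\pi$ along an isomorphism and satisfies the Maurer--Cartan equation, and comparing the coefficients of $t^3$ and $t^4$ in $[\pi+tA+t^2\pi_N,\pi+tA+t^2\pi_N]_\Li=0$ yields both $[A,\pi_N]_\Li=0$ and $[\pi_N,\pi_N]_\Li=0$, whence Proposition \ref{pro:3LieMC} finishes the argument (and the homomorphism statement is the $t^3$ coefficient of the displayed identity). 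Your brute-force fallback would eventually succeed, but as it stands the Fundamental Identity is asserted rather than proved.
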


Now we give an intrinsic characterization of the deformed bracket $[\cdot,\cdot,\cdot]_{N}$.

\begin{lem}
Denote the deformed bracket $[\cdot,\cdot,\cdot]_{N}$ by $\pi_N$. Then we have
\begin{eqnarray}\label{deformed-3-Lie-pi}
\pi_{N}:=\frac{1}{2}\Big([[\pi,N]_{\Li},N]_{\Li}-[\pi,N^2]_{\Li}\Big).
\end{eqnarray}
\end{lem}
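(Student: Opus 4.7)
The plan is to verify the identity by direct unwinding of the circle product and graded commutator on $C^*(\g,\g)$, exploiting that $N\in C^0(\g,\g)=\Hom(\g,\g)$ has degree $0$ and $\pi\in C^1(\g,\g)$ has degree $1$, so every graded sign reduces to $+1$.

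First I would compute $[\pi,N]_{\Li}$ explicitly. Applying the definition of $P\circ Q$ with $p=1$, $q=0$, the $k=1$ terms of the first two sums and the third sum give $(\pi\circ N)(x,y,z)=\pi(Nx,y,z)+\pi(x,Ny,z)+\pi(x,y,Nz)$, while $(N\circ\pi)(x,y,z)=N\pi(x,y,z)$. Hence $[\pi,N]_{\Li}(x,y,z)=[Nx,y,z]+[x,Ny,z]+[x,y,Nz]-N[x,y,z]$, the familiar $N$-twisted bracket.

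Next I would iterate the same formula: treat $\mu:=[\pi,N]_{\Li}$ as a new degree-$1$ element and apply the previous computation with $\pi$ replaced by $\mu$ to get $[\mu,N]_{\Li}(x,y,z)=\mu(Nx,y,z)+\mu(x,Ny,z)+\mu(x,y,Nz)-N\mu(x,y,z)$. Expanding each of the four resulting terms using the formula for $\mu$ and collecting like monomials, each "mixed" term $[Nx,Ny,z]$, $[Nx,y,Nz]$, $[x,Ny,Nz]$ and each $N[Nx,y,z]$, $N[x,Ny,z]$, $N[x,y,Nz]$ appears with coefficient $2$, while $[N^2x,y,z]$, $[x,N^2y,z]$, $[x,y,N^2z]$ and $N^2[x,y,z]$ appear with coefficient $1$. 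Independently, $[\pi,N^2]_{\Li}(x,y,z)=[N^2x,y,z]+[x,N^2y,z]+[x,y,N^2z]-N^2[x,y,z]$ by the $p=1$, $q=0$ case again.

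Subtracting cancels the four single-$N^2$ terms in $[[\pi,N]_{\Li},N]_{\Li}$ and doubles the $N^2[x,y,z]$ contribution, leaving exactly twice the seven-term right-hand side of \eqref{Nijenhuis-operator}. Dividing by $2$ therefore yields $\pi_N$, proving \eqref{deformed-3-Lie-pi}.

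The only real obstacle is bookkeeping: making sure, when expanding the nested commutator $[[\pi,N]_{\Li},N]_{\Li}$, that every cross term is counted with the correct multiplicity and that the sign in $[P,Q]_{\Li}=P\circ Q-(-1)^{pq}Q\circ P$ is handled (here $pq=0$ at the inner step and $pq=1\cdot 0=0$ at the outer step, so no sign surprises). No Jacobi identity or Nijenhuis hypothesis is needed for this lemma — it is a purely algebraic reformulation of the defining formula \eqref{Nijenhuis-operator} of $[\cdot,\cdot,\cdot]_N$ in the language of the graded Lie algebra $(C^*(\g,\g),[\cdot,\cdot]_{\Li})$.
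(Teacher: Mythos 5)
Your proof is correct and is precisely the direct computation that the paper omits (its proof reads only ``It follows from straightforward computations''); your expansion of $[[\pi,N]_{\Li},N]_{\Li}$ and $[\pi,N^2]_{\Li}$ and the resulting coefficients all check out, and you are right that no Nijenhuis hypothesis is needed. The only quibble is a wording slip: of the four $N^2$-terms, only the three of the form $[N^2x,y,z]$, $[x,N^2y,z]$, $[x,y,N^2z]$ cancel, while $N^2[x,y,z]$ doubles — which is what your final tally in fact uses.
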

\begin{proof}
  It follows from straightforward computations. We omit details.
\end{proof}

\begin{pro}\label{pro:c3c}
 Let $N$ be a Nijenhuis operator on a $3$-Lie algebra $(\g,[\cdot,\cdot,\cdot])$. Then $(\g,[\cdot,\cdot,\cdot],[\cdot,\cdot,\cdot]_{N})$ is a compatible $3$-Lie algebra if and only if $[\pi,N]_{\Li}$ is a $3$-Lie algebra structure.
\end{pro}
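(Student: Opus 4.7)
My plan is to translate the compatibility condition into the graded Lie algebra language of Section \ref{sec:cohomology I} and exploit the Maurer--Cartan equation for $\pi$.

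Write $\pi(x,y,z)=[x,y,z]$ and set $\alpha := [\pi,N]_{\Li}\in C^{1}(\g,\g)$. By Theorem \ref{pro:lsymNR}, saying that $(\g,\pi,\pi_{N})$ is a compatible $3$-Lie algebra amounts, given that $\pi$ and $\pi_{N}$ are individually $3$-Lie brackets, to the single identity $[\pi,\pi_{N}]_{\Li}=0$. By Proposition \ref{pro:3LieMC}, saying that $\alpha$ is a $3$-Lie algebra structure is equivalent to $[\alpha,\alpha]_{\Li}=0$. So the task reduces to proving the identity
\begin{equation*}
[\pi,\pi_{N}]_{\Li}=-\tfrac{1}{2}[\alpha,\alpha]_{\Li}.
\end{equation*}

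To prove this, I would start from the formula $\pi_{N}=\tfrac{1}{2}\bigl([\alpha,N]_{\Li}-[\pi,N^{2}]_{\Li}\bigr)$ from the preceding lemma, so that
\begin{equation*}
2[\pi,\pi_{N}]_{\Li}=[\pi,[\alpha,N]_{\Li}]_{\Li}-[\pi,[\pi,N^{2}]_{\Li}]_{\Li}.
\end{equation*}
The second term is handled by the graded Jacobi identity: since $|\pi|=1$, it gives $2[\pi,[\pi,N^{2}]_{\Li}]_{\Li}=[[\pi,\pi]_{\Li},N^{2}]_{\Li}$, which vanishes because $[\pi,\pi]_{\Li}=0$ (as $\pi$ is a $3$-Lie bracket). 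The same Maurer--Cartan identity yields the useful intermediate fact $[\pi,\alpha]_{\Li}=[\pi,[\pi,N]_{\Li}]_{\Li}=\tfrac{1}{2}[[\pi,\pi]_{\Li},N]_{\Li}=0$.

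For the first term, I would apply the graded Jacobi identity with entries $\pi$ (degree $1$), $\alpha$ (degree $1$), $N$ (degree $0$). Keeping careful track of signs, the identity rearranges to $[\pi,[\alpha,N]_{\Li}]_{\Li}=[[\pi,\alpha]_{\Li},N]_{\Li}-[\alpha,[\pi,N]_{\Li}]_{\Li}$, and substituting $[\pi,\alpha]_{\Li}=0$ and $[\pi,N]_{\Li}=\alpha$ gives $[\pi,[\alpha,N]_{\Li}]_{\Li}=-[\alpha,\alpha]_{\Li}$. Combining the two computations produces the desired identity $[\pi,\pi_{N}]_{\Li}=-\tfrac{1}{2}[\alpha,\alpha]_{\Li}$, and the equivalence announced in the proposition follows.

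The only real obstacle is bookkeeping of the graded signs in the Jacobi identity; once $\alpha$ is isolated as the relevant auxiliary degree-$1$ element and the two vanishing lemmas $[\pi,\pi]_{\Li}=0$ and $[\pi,\alpha]_{\Li}=0$ are noted, the rest is a short manipulation inside the graded Lie algebra $(C^{*}(\g,\g),[\cdot,\cdot]_{\Li})$ and does not require unpacking the lengthy explicit formula for the $\circ$-product.
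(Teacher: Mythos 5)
Your proposal is correct and follows essentially the same route as the paper: both reduce the statement to the identity $[\pi,\pi_N]_{\Li}=-\tfrac{1}{2}[[\pi,N]_{\Li},[\pi,N]_{\Li}]_{\Li}$, derived from the formula $\pi_N=\tfrac{1}{2}([[\pi,N]_{\Li},N]_{\Li}-[\pi,N^2]_{\Li})$ via the graded Jacobi identity and the vanishing of $[\pi,\pi]_{\Li}$ and $[\pi,[\pi,N]_{\Li}]_{\Li}$. The sign bookkeeping in your Jacobi manipulations checks out, so no gaps remain.
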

\begin{proof}
 Let $N$ be a Nijenhuis operator on a $3$-Lie algebra $(\g,[\cdot,\cdot,\cdot]).$ By \eqref{deformed-3-Lie-pi}, we have
 \begin{eqnarray*}
[\pi,\pi_{N}]_{\Li}&=&\frac{1}{2}[\pi,[[\pi,N]_{\Li},N]_{\Li}]_{\Li}-\frac{1}{2}[\pi,[\pi,N^2]_{\Li},]_{\Li}\\
&=&\frac{1}{2}([[\pi,[\pi,N]_{\Li}]_{\Li},N]_{\Li}-[[\pi,N]_{\Li},[\pi,N]_{\Li}]_{\Li})\\
&=&-\frac{1}{2}[[\pi,N]_{\Li},[\pi,N]_{\Li}]_{\Li}.
\end{eqnarray*}
Therefore, by Proposition \ref{pro:3LieMC} and Theorem \ref{pro:lsymNR},   $(\g,[\cdot,\cdot,\cdot],[\cdot,\cdot,\cdot]_{N})$ is a compatible $3$-Lie algebra if and only if   $[\pi,N]_{\Li}$ is a $3$-Lie algebra structure.
\end{proof}

\begin{rmk} The above result is different from the case of Lie algebras.
  In the Lie algebra case, a Nijenhuis operator on a Lie algebra $(\g,[\cdot,\cdot])$ also induces a deformed Lie algebra $(\g,[\cdot,\cdot]_N)$, where the Lie bracket $[\cdot,\cdot]_N$ is given by
  $$
  [x,y]_N=[Nx,y]+[x,Ny]-N[x,y],\quad \forall x,y\in\g.
  $$
  Moreover, $(\g,[\cdot,\cdot],[\cdot,\cdot]_N)$ is naturally a compatible Lie algebra. See \cite{Dorfman,LSB} for more details.
\end{rmk}

\emptycomment{

\begin{ex}{\rm
Consider the $3$-dimensional $3$-Lie algebra $(\g,[\cdot,\cdot,\cdot]_{\g})$ given with respect to a basis $\{e_1,e_2,e_3\}$ by
$$[e_1,e_2,e_3]_{\g}=e_1.$$
Thanks to Theorem 3.10 in {\rm\cite{Liu-Jie-Feng}}, any linear transformation $N$ on $\g$ is a Nijenhuis operator.
Suppose
\begin{eqnarray*}
N=(a_{ij})_{3\times3}=\left(\begin{array}{ccc}
 a_{11}&a_{12}&a_{13}\\
 a_{21}&a_{22}&a_{23}\\
 a_{31}&a_{32}&a_{33}
 \end{array}\right).
 \end{eqnarray*}
 We have
 \begin{eqnarray*}
   [\pi,N]_{\Li}(e_i,e_j,e_k)&=&[Ne_i,e_j,e_k]_{\g}+[e_i,Ne_j,e_k]_{\g}+[e_i,e_j,Ne_k]_{\g}-N[e_i,e_j,e_k]_{\g},
   \quad 1\leq i,j,k\leq3.
    \end{eqnarray*}
For convenience, we set $\langle e_i,e_j,e_k\rangle_{N}=[\pi,N]_{\Li}(e_i,e_j,e_k).$
Then the multiplication table of $\langle \cdot,\cdot,\cdot\rangle_{N}$ is given by
 \begin{eqnarray*}
\langle e_i,e_j,e_k\rangle_{N}=(-1)^{\tau(ijk)}(a_{22}e_1+a_{33}e_1-a_{21}e_2+a_{31}e_3),\quad i\neq j\neq k,
 \end{eqnarray*}
 and all the other brackets are zero, where the terms $\tau(ijk)$ is the inverse table permutation of $i,j,k.$ Then we can deduce that the $3$-ary  multiplication $\langle \cdot,\cdot,\cdot\rangle_{N}$ is a $3$-Lie algebra strcuture.

By Proposition \ref{pro:c3c}, we get $(\g,[\cdot,\cdot,\cdot]_{\g},[\cdot,\cdot,\cdot]_{N})$ is a compatible $3$-Lie algebra,
where the deformed $3$-Lie algebra structure $[\cdot,\cdot,\cdot]_{N}$ is defined by
$$[e_1,e_2,e_3]_{N}=(a_{22}a_{33}-a_{32}a_{23})e_1+(a_{31}a_{23}-a_{33}a_{21})e_2+(a_{21}a_{32}-a_{22}a_{31})e_3.$$

}
\end{ex}
}

In the sequel, we study another kind  of deformations, which will leads to the concept of a Nijenhuis operator on a compatible 3-Lie algebra.

\begin{defi}
  Let $(\mathfrak{g},[\cdot,\cdot,\cdot],\{\cdot,\cdot,\cdot\})$ be a compatible $3$-Lie algebra. Let $\omega_{i},\tilde{\omega}_{i}\in \Hom(\wedge^3\g,\g),i=1,2.$
Define
\begin{equation}
[x,y,z]_t=[x,y,z]+t\omega_1(x,y,z)+t^2\tilde{\omega}_1(x,y,z),\;\;\{x,y,z\}_t=\{x,y,z\}+t\omega_2(x,y,z)+t^2\tilde{\omega}_2(x,y,z),
\end{equation}
for all $x,y,z\in \g$.
If for any $t$, $(\mathfrak{g},[\cdot,\cdot,\cdot]_t,\{\cdot,\cdot,\cdot\}_t)$ is still a
compatible $3$-Lie algebra, then we say
that $(\omega_1,\tilde{\omega}_1,\omega_2,\tilde{\omega}_2)$ generate a  {\bf $2$-order $1$-parameter deformation} of
$(\mathfrak{g},[\cdot,\cdot,\cdot],\{\cdot,\cdot,\cdot\})$.
\end{defi}
\emptycomment{
\begin{pro}\label{conds}
With the above notations, $\omega_1,\tilde{\omega}_1,\omega_2,\tilde{\omega}_2$ generate an $2$-order $1$-parameter deformation of the compatible $3$-Lie algebra $(\mathfrak{g},[\cdot,\cdot,\cdot],\{\cdot,\cdot,\cdot\})$  if and only if the following conditions are satisfied:
  \begin{eqnarray}
   \delta\omega_1&=&0;\label{cond1}\\
   \delta\omega_l+\frac{1}{2}\sum_{i=1}^{l-1}[\omega_i,\omega_{l-i}]&=&0,\quad 2\leq l\leq n-1;\label{cond2}\\
   \frac{1}{2}\sum_{i=l-n+1}^{n-1}[\omega_i,\omega_{l-i}]&=&0,\quad n\leq l\leq 2n-2.\label{cond3}
   \end{eqnarray}
Here $[\omega_i,\omega_j]$ is given by
\begin{eqnarray}\label{N-R bracket}
[\omega_i,\omega_j](X,Y,z)&=&\omega_i(X,\omega_j(Y,z))-\omega_i(Y,\omega_j(X,z))+\omega_j(X,\omega_i(Y,z))-\omega_j(Y,\omega_i(X,z))\nonumber\\
&&-\omega_i(\omega_j(X,\cdot)\circ Y,z)-\omega_j(\omega_i(X,\cdot)\circ Y,z),\quad\forall X,Y\in\wedge^{n-1}\g,~z\in\g,
\end{eqnarray}
where $\omega_j(X,\cdot)\circ Y\in\wedge^{n-1}\g$ is given by
\begin{eqnarray*}
\omega_j(X,\cdot)\circ Y=\sum_{k=1}^{n-1}y_1\wedge\cdots\wedge\omega_j(X,y_k)\wedge\cdots\wedge y_{n-1},\quad \forall~ Y=(y_1,\cdots, y_{n-1}).
\end{eqnarray*}
\end{pro}}

We set
$\pi_1(\cdot,\cdot,\cdot)=[\cdot,\cdot,\cdot]$ and $ \pi_2(\cdot,\cdot,\cdot)=\{\cdot,\cdot,\cdot\}.$
By Theorem \ref{pro:lsymNR},  $(\g,[\cdot,\cdot,\cdot]_t,\{\cdot,\cdot,\cdot\}_t)$ is a 2-order 1-parameter deformation of $(\g,\pi_1,\pi_2)$  if and only if
\emptycomment{
\begin{equation}\label{eq:2-closed omega bracket-1}
 \begin{array}{rclrclrcl}
&[{\pi}_1,{\omega}_1]_{\Li}&=&0,\\
&[{\pi}_1,{\omega}_2]_{\Li}+[{\pi}_2,{\omega}_1]_{\Li}&=&0,\\
&[{\pi}_2,{\omega}_2]_{\Li}&=&0,\\
 \end{array}
\end{equation}}

\begin{eqnarray}\label{eq:2-closed omega bracket-1}
\left\{\begin{aligned}
{}[{\pi}_1,{\omega}_1]_{\Li}&=&0,\\
{}[{\pi}_1,{\omega}_2]_{\Li}+[{\pi}_2,{\omega}_1]_{\Li}&=&0,\\
{}[{\pi}_2,{\omega}_2]_{\Li}&=&0;
\end{aligned}\right.
\end{eqnarray}

\begin{eqnarray}\label{eq:2-closed omega bracket-2}
\left\{\begin{aligned}
{}[{\pi_1},\tilde{\omega}_1]_{\Li}+\frac{1}{2}[\omega_1,\omega_1]_{\Li}&=&0,\\
{}[\tilde{\omega}_1,{\pi}_2]_{\Li}+[{\omega}_1,{\omega}_2]_{\Li}+[\pi_1,\tilde{\omega}_2]_{\Li}&=&0,\\
{}[\pi_2,\tilde{\omega}_2]_{\Li}+\frac{1}{2}[\omega_2,\omega_2]_{\Li}&=&0;
\end{aligned}\right.
\end{eqnarray}

\begin{eqnarray}\label{eq:2-closed omega bracket-3}
\left\{\begin{aligned}
{}{[{\omega}_1,\tilde{\omega}_1]}_{\Li}&=&0,\\
{}{[\tilde{\omega}_1,{\omega}_2]}_{\Li}+{[\tilde{\omega}_2,{\omega}_1]}_{\Li}&=&0,\\
{}{[{\omega}_2,\tilde{\omega}_2]}_{\Li}&=&0;
\end{aligned}\right.
\end{eqnarray}

\begin{eqnarray}\label{eq:2-closed omega bracket-4}
\left\{\begin{aligned}
{}{[\tilde{\omega}_1,\tilde{\omega}_1]}_{\Li}&=&0,\\
{}{[\tilde{\omega}_1,\tilde{\omega}_2]}_{\Li}&=&0,\\
{}{[\tilde{\omega}_2,\tilde{\omega}_2]}_{\Li}&=&0.
\end{aligned}\right.
\end{eqnarray}

Note that \eqref{eq:2-closed omega bracket-1} means that
$(\omega_1,\omega_2)\in \huaC_{\cC}^2(\g,\g)$  is a $2$-cocycle for the
compatible $3$-Lie algebra $(\mathfrak{g},[\cdot,\cdot,\cdot],\{\cdot,\cdot,\cdot\})$, i.e.
$\jetd^2(\omega_1,\omega_2)=0$,
\eqref{eq:2-closed omega bracket-4} means that
$(\g,\tilde{\omega}_1,\tilde{\omega}_2)$ is a compatible $3$-Lie algebra, and \eqref{eq:2-closed omega bracket-3}
means that $(\omega_1,\omega_2)\in \huaC_{\cC}^2(\g,\g)$  is a $2$-cocycle for the
compatible $3$-Lie algebra $(\g,\tilde{\omega}_1,\tilde{\omega}_2)$.

\begin{defi}
A $2$-order $1$-parameter deformation $(\g,[\cdot,\cdot,\cdot]_t,\{\cdot,\cdot,\cdot\}_t)$ of a compatible $3$-Lie algebra $(\g,[\cdot,\cdot,\cdot],\{\cdot,\cdot,\cdot\})$ generated by $(\omega_1,\tilde{\omega}_1,\omega_2,\tilde{\omega}_2)$ is said to be {\bf trivial} if there exists a linear map $N:\g\rightarrow\g$ such that ${\Id}+tN:(\g,[\cdot,\cdot,\cdot]_t,\{\cdot,\cdot,\cdot\}_t)\longrightarrow (\g,[\cdot,\cdot,\cdot],\{\cdot,\cdot,\cdot\})$ is a compatible $3$-Lie algebra homomorphism.
\end{defi}

By straightforward computations, $(\mathfrak{g},[\cdot,\cdot,\cdot]_t,\{\cdot,\cdot,\cdot\}_t)$ is a trivial 2-order 1-parameter deformation if and only if
\begin{eqnarray}
\label{eq:nijenhuis-1}\omega_1(x,y,z)&=&[N(x),y,z]+[x,N(y),z]+[x,y,N(z)]-N([x,y,z]),\label{C-Nijenhuis-1}\\
\label{eq:nijenhuis-2}\tilde{\omega}_1(x,y,z)+N\omega_1(x,y,z)&=&[N(x),N(y),z]+[N(x),y,N(z)]+[x,N(y),N(z)],\label{C-Nijenhuis-2}\\
\label{eq:nijenhuis-3}N\tilde{\omega}_1(x,y,z)&=&[Nx,Ny,Nz],\label{C-Nijenhuis-3}\\
\label{eq:nijenhuis-4}\omega_2(x,y,z)&=&\{N(x),y,z\}+\{x,N(y),z\}+\{x,y,N(z)\}-N(\{x,y,z\}),\label{C-Nijenhuis-4}\\
\label{eq:nijenhuis-5}\tilde{\omega}_2(x,y,z)+N\omega_2(x,y,z)&=&\{N(x),N(y),z\}+\{N(x),y,N(z)\}+\{x,N(y),N(z)\},\label{C-Nijenhuis-5}\\
\label{eq:nijenhuis-6}N\tilde{\omega}_2(x,y,z)&=&\{Nx,Ny,Nz\}\label{C-Nijenhuis-6}.
\end{eqnarray}

By $(\ref{C-Nijenhuis-1})$-$(\ref{C-Nijenhuis-3})$, $N$ is a Nijenhuis operator on the $3$-Lie algebra $(\g,[\cdot,\cdot,\cdot])$.  By $(\ref{C-Nijenhuis-4})$-$(\ref{C-Nijenhuis-6})$, $N$ is a Nijenhuis operator on the $3$-Lie algebra $(\g,\{\cdot,\cdot,\cdot\})$.
This leads to the concept of
a Nijenhuis operator  on  a compatible 3-Lie algebra.

\begin{defi}
Let $(\mathfrak{g},[\cdot,\cdot,\cdot],\{\cdot,\cdot,\cdot\})$ be a
compatible $3$-Lie algebra. A linear map $N: \g\rightarrow
\mathfrak{g}$  is called a {\bf Nijenhuis operator} on
$(\mathfrak{g},[\cdot,\cdot,\cdot],\{\cdot,\cdot,\cdot\})$ if $N$ is both a Nijenhuis operator on the $3$-Lie algebra
$(\mathfrak{g},[\cdot,\cdot,\cdot])$ and a Nijenhuis operator on the $3$-Lie algebra
$(\mathfrak{g},\{\cdot,\cdot,\cdot\})$.
\end{defi}

\begin{pro}\label{pro:Nijenhuis torsion}
 Let $(\g,[\cdot,\cdot,\cdot],\{\cdot,\cdot,\cdot\})$ be a compatible $3$-Lie algebra and $N:\g\rightarrow \g$ be a linear map.
 Then $N$ is a Nijenhuis operator on the compatible $3$-Lie algebra $(\g,[\cdot,\cdot,\cdot],\{\cdot,\cdot,\cdot\})$ if and only if for all $k_1,k_2\in\K$, $N$ is a Nijenhuis operator on the $3$-Lie algebra  $(\frak g,
 \Courant{\cdot,\cdot,\cdot}=k_1[\cdot,\cdot,\cdot]+k_2\{\cdot,\cdot,\cdot\})$.
\end{pro}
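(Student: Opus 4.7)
The plan is to exploit the fact that the Nijenhuis torsion $T_\pi N$ is $\K$-linear in the bracket $\pi$. Inspecting the explicit formula
\begin{eqnarray*}
T_\pi N(x,y,z) &=& 3\pi(Nx,Ny,Nz) - 3N\bigl(\pi(Nx,Ny,z)+\pi(x,Ny,Nz)+\pi(Nx,y,Nz)\bigr)\\
&& + 3N^2\bigl(\pi(Nx,y,z)+\pi(x,Ny,z)+\pi(x,y,Nz)\bigr) - 3N^3 \pi(x,y,z),
\end{eqnarray*}
every summand is of the form $N^a \pi(\bullet,\bullet,\bullet)$ with exactly one occurrence of $\pi$, so for any $\pi_1,\pi_2\in\Hom(\wedge^3\g,\g)$ and any $k_1,k_2\in\K$ one has $T_{k_1\pi_1+k_2\pi_2}N = k_1 T_{\pi_1}N + k_2 T_{\pi_2}N$. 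This linearity (which also follows conceptually from the bilinearity of $[\cdot,\cdot]_\Li$ applied to the compact formula $T_\pi N = \frac12[[[\pi,N]_\Li,N]_\Li,N]_\Li - \frac12[[\pi,N^2]_\Li,N]_\Li - [[\pi,N]_\Li,N^2]_\Li + [\pi,N^3]_\Li$) is the only input beyond \eqref{eq:Courant bracket}, which guarantees that $\Courant{\cdot,\cdot,\cdot}=k_1[\cdot,\cdot,\cdot]+k_2\{\cdot,\cdot,\cdot\}$ is genuinely a $3$-Lie bracket so that the question of $N$ being Nijenhuis on it even makes sense.

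For the forward direction, assume $N$ is a Nijenhuis operator on the compatible $3$-Lie algebra $(\g,[\cdot,\cdot,\cdot],\{\cdot,\cdot,\cdot\})$. By definition this means $T_{\pi_1}N = 0$ and $T_{\pi_2}N = 0$, where $\pi_1(x,y,z)=[x,y,z]$ and $\pi_2(x,y,z)=\{x,y,z\}$. Substituting into the linearity relation yields $T_{k_1\pi_1+k_2\pi_2}N = 0$ for every $(k_1,k_2)\in\K^2$, so $N$ is a Nijenhuis operator on each $3$-Lie algebra $(\g,\Courant{\cdot,\cdot,\cdot})$.

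For the converse, specialize the hypothesis first to $(k_1,k_2)=(1,0)$ to recover $T_{\pi_1}N=0$, and then to $(k_1,k_2)=(0,1)$ to recover $T_{\pi_2}N=0$. By the definition of a Nijenhuis operator on a compatible $3$-Lie algebra, this is exactly what is required. There is no real obstacle here: the entire argument rides on the elementary $\K$-linearity of $T_\pi N$ in $\pi$, and the compatibility of the two brackets is invoked only to ensure that $(\g,k_1\pi_1+k_2\pi_2)$ is indeed a $3$-Lie algebra so that the conclusion is meaningful for all scalars, not merely for the two canonical choices.
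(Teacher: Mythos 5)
Your proof is correct and follows essentially the same route as the paper: the paper computes $\Courant{x,y,z}_N=k_1[x,y,z]_N+k_2\{x,y,z\}_N$ and hence that the Nijenhuis defect $N\Courant{x,y,z}_N-\Courant{Nx,Ny,Nz}$ is the corresponding linear combination of the two individual defects, which is exactly your observation that $T_{k_1\pi_1+k_2\pi_2}N=k_1T_{\pi_1}N+k_2T_{\pi_2}N$ since $T_\pi N$ is $3$ times that defect. The specializations $(k_1,k_2)=(1,0),(0,1)$ for the converse and the remark that compatibility is only needed so that $\Courant{\cdot,\cdot,\cdot}$ is a genuine $3$-Lie bracket are both sound.
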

\begin{proof}
First we have   \begin{eqnarray*}
    \Courant{x,y,z}_N&=&\Courant{Nx,Ny,z}+\Courant{x,Ny,Nz}+\Courant{Nx,y,Nz}\\
\nonumber&&-N\Courant{Nx,y,z}-N\Courant{x,Ny,z}-N\Courant{x,y,Nz}+N^2\Courant{x,y,z}\\
&=&k_1[x,y,z]_N+k_2\{x,y,z\}_N,
  \end{eqnarray*}
  which implies that
  $$
  N\Courant{x,y,z}_N-\Courant{Nx,Ny,Nz}_N=k_1(N[x,y,z]_N-[Nx,Ny,Nz])+k_2(N\{x,y,z\}_N-\{Nx,Ny,Nz\}).
  $$
  Therefore, for all $k_1,k_2\in\K$, $N$ is a Nijenhuis operator on the $3$-Lie algebra  $(\frak g,
 \Courant{\cdot,\cdot,\cdot})$ if and only if $N$ is a Nijenhuis operator on the compatible $3$-Lie algebra $(\g,[\cdot,\cdot,\cdot],\{\cdot,\cdot,\cdot\})$.
\end{proof}

\begin{pro}\label{pro:Nijenhuis operator property}
 Let  $N $ be a Nijenhuis operator on a compatible $3$-Lie algebra $(\mathfrak{g},[\cdot,\cdot,\cdot],\{\cdot,\cdot,\cdot\})$. Then $(\g,[\cdot,\cdot,\cdot]_N,\{\cdot,\cdot,\cdot\}_N)$ is also a compatible $3$-Lie algebra and $N$ is a compatible $3$-Lie algebra
 homomorphism from
 $(\g,[\cdot,\cdot,\cdot]_N,\{\cdot,\cdot,\cdot\}_N)$  to $(\g,[\cdot,\cdot,\cdot],\{\cdot,\cdot,\cdot\})$.
\end{pro}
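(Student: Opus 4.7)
The plan is to reduce everything to the single-bracket case via Proposition~\ref{pro:Nijenhuis torsion} and the characterization of a compatible $3$-Lie algebra as a pencil of $3$-Lie brackets.

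First, since $N$ is a Nijenhuis operator on each of the $3$-Lie algebras $(\g,[\cdot,\cdot,\cdot])$ and $(\g,\{\cdot,\cdot,\cdot\})$ separately, Proposition~\ref{Nijenhuis-property} immediately yields that $(\g,[\cdot,\cdot,\cdot]_N)$ and $(\g,\{\cdot,\cdot,\cdot\}_N)$ are $3$-Lie algebras, and that $N$ is a $3$-Lie algebra homomorphism from each of them to the corresponding original $3$-Lie algebra. This already handles the homomorphism statement, so all that remains is the compatibility of the two deformed brackets.

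For compatibility, fix $k_1,k_2\in\K$ and set $\Courant{x,y,z}=k_1[x,y,z]+k_2\{x,y,z\}$. By the earlier proposition characterizing compatible $3$-Lie algebras, $(\g,\Courant{\cdot,\cdot,\cdot})$ is a $3$-Lie algebra. By Proposition~\ref{pro:Nijenhuis torsion}, $N$ is then a Nijenhuis operator on $(\g,\Courant{\cdot,\cdot,\cdot})$, and Proposition~\ref{Nijenhuis-property} gives that the deformed bracket $\Courant{\cdot,\cdot,\cdot}_N$ is a $3$-Lie bracket on $\g$. The computation displayed in the proof of Proposition~\ref{pro:Nijenhuis torsion} already shows
\[
\Courant{x,y,z}_N=k_1[x,y,z]_N+k_2\{x,y,z\}_N,
\]
so $k_1[\cdot,\cdot,\cdot]_N+k_2\{\cdot,\cdot,\cdot\}_N$ is a $3$-Lie bracket for all scalars $k_1,k_2$. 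Invoking the pencil-characterization of compatibility once more, $(\g,[\cdot,\cdot,\cdot]_N,\{\cdot,\cdot,\cdot\}_N)$ is a compatible $3$-Lie algebra.

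I expect no real obstacle: the key insight is that the Nijenhuis condition, the deformed bracket, and the compatibility axiom are all compatible with taking linear combinations of the two $3$-Lie structures, so the result follows formally by composing the ``pencil'' lemma with the classical Nijenhuis deformation result. The only place any attention is needed is verifying the identity $\Courant{\cdot,\cdot,\cdot}_N=k_1[\cdot,\cdot,\cdot]_N+k_2\{\cdot,\cdot,\cdot\}_N$, which is a direct expansion of the definition of the deformed bracket and has already been recorded in the proof of Proposition~\ref{pro:Nijenhuis torsion}.
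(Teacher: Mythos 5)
Your proposal is correct and follows essentially the same route as the paper: reduce to the pencil bracket $\Courant{\cdot,\cdot,\cdot}=k_1[\cdot,\cdot,\cdot]+k_2\{\cdot,\cdot,\cdot\}$, apply Proposition~\ref{pro:Nijenhuis torsion} to see $N$ is Nijenhuis for the pencil, use Proposition~\ref{Nijenhuis-property} together with the identity $\Courant{\cdot,\cdot,\cdot}_N=k_1[\cdot,\cdot,\cdot]_N+k_2\{\cdot,\cdot,\cdot\}_N$, and conclude via the pencil characterization of compatibility. The only (harmless) difference is that you derive the homomorphism claim from the two individual brackets rather than from the pencil.
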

\begin{proof}
  Let  $N:\g\rightarrow \g$ be a Nijenhuis operator on the compatible $3$-Lie algebra $(\mathfrak{g},[\cdot,\cdot,\cdot],\{\cdot,\cdot,\cdot\})$. By Proposition \ref{pro:Nijenhuis torsion}, $N$ is a Nijenhuis operator on the $3$-Lie algebra $(\g,\Courant{\cdot,\cdot,\cdot})$, where the bracket $\Courant{\cdot,\cdot,\cdot}$ is given by \eqref{eq:Courant bracket}. By Proposition \ref{Nijenhuis-property}, we have
  \begin{eqnarray*}
    \Courant{\cdot,\cdot,\cdot}_N
&=&k_1[\cdot,\cdot,\cdot]_N+k_2\{\cdot,\cdot,\cdot\}_N,
  \end{eqnarray*}
is a 3-Lie algebra for all  $k_1,k_2\in\K$. Therefore,
$(\g,[\cdot,\cdot,\cdot]_N,\{\cdot,\cdot,\cdot\}_N)$ is a compatible $3$-Lie algebra and $N$ is a
  homomorphism from $(\g,[\cdot,\cdot,\cdot]_N,\{\cdot,\cdot,\cdot\}_N)$  to $(\g,[\cdot,\cdot,\cdot],\{\cdot,\cdot,\cdot\})$.
\end{proof}

We have seen that a trivial $2$-order $1$-parameter  deformation of a compatible $3$-Lie algebra gives rise to a Nijenhuis operator. The following theorem shows that the converse is also true.
\begin{thm}
Let $N$ be a Nijenhuis operator on a compatible $3$-Lie algebra $(\g,[\cdot,\cdot,\cdot],\{\cdot,\cdot,\cdot\})$. Then a  $2$-order $1$-parameter deformation can be obtained by putting
\begin{eqnarray}
{}\omega_1(x,y,z)&=&[Nx,y,z]+[x,Ny,z]+[x,y,Nz]-N[x,y,z],\\
\tilde{\omega}_1(x,y,z)&=&[Nx,Ny,z]+[Nx,y,Nz]+[x,Ny,Nz]\\
\nonumber&&-N([Nx,y,z]+[x,Ny,z]+[x,y,Nz]-N[x,y,z]),\\
{}\omega_2(x,y,z)&=&\{Nx,y,z\}+\{x,Ny,z\}+\{x,y,Nz\}-N\{x,y,z\},\\
\tilde{\omega}_2(x,y,z)&=&\{Nx,Ny,z\}+\{Nx,y,Nz\}+\{x,Ny,Nz\}\\
\nonumber&&-N(\{Nx,y,z\}+\{x,Ny,z\}+\{x,y,Nz\}-N\{x,y,z\}),
\end{eqnarray}
for all $x,y,z\in \g.$ Moreover, this $2$-order $1$-parameter deformation is trivial.
\end{thm}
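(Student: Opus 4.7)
The plan is to prove the single strong identity
\[
(\Id + tN)[x,y,z]_t = [(\Id+tN)x,\,(\Id+tN)y,\,(\Id+tN)z],
\]
together with the analogous identity for $\{\cdot,\cdot,\cdot\}_t$, as equalities of polynomials in $t$ for all $x,y,z\in\g$. This one identity simultaneously establishes that $(\g,[\cdot,\cdot,\cdot]_t,\{\cdot,\cdot,\cdot\}_t)$ is a compatible $3$-Lie algebra for every $t$ and that the deformation is trivial.

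First I would reformulate the given data: observe that $\omega_1=[\pi_1,N]_{\Li}$ and $\omega_2=[\pi_2,N]_{\Li}$, while $\tilde{\omega}_1$ is precisely the deformed $3$-Lie bracket $[\cdot,\cdot,\cdot]_N$ of \eqref{Nijenhuis-operator} and $\tilde{\omega}_2$ is its analogue $\{\cdot,\cdot,\cdot\}_N$. Then I would expand both sides of the displayed identity in powers of $t$: the right-hand side, being trilinear in the arguments $(\Id+tN)x,(\Id+tN)y,(\Id+tN)z$, produces explicit terms of degrees $0,1,2,3$, while the left-hand side equals
\[
[x,y,z] + t\bigl(\omega_1(x,y,z)+N[x,y,z]\bigr) + t^2\bigl(\tilde{\omega}_1(x,y,z)+N\omega_1(x,y,z)\bigr) + t^3\,N\tilde{\omega}_1(x,y,z).
\]
Matching the coefficients of $t^0,t^1,t^2$ is tautological from the definitions of $\omega_1$ and $\tilde{\omega}_1$; the matching at $t^3$ reduces to the identity $N\tilde{\omega}_1(x,y,z)=[Nx,Ny,Nz]$, which via the identification $\tilde{\omega}_1=[\cdot,\cdot,\cdot]_N$ is exactly the homomorphism statement of Proposition \ref{Nijenhuis-property}. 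The companion identity for $\{\cdot,\cdot,\cdot\}_t$ is verified identically using the Nijenhuis property of $N$ for $\{\cdot,\cdot,\cdot\}$.

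With the identity in hand, the remaining conclusions are structural. Over $\K[[t]]$ (equivalently, for all but finitely many $t\in\K$), the operator $\Id+tN$ is invertible, so the identity exhibits $[\cdot,\cdot,\cdot]_t$ and $\{\cdot,\cdot,\cdot\}_t$ as pullbacks of the original brackets along this isomorphism. Pullbacks preserve the Fundamental Identity as well as the compatibility relation \eqref{eq:cl}, hence $(\g,[\cdot,\cdot,\cdot]_t,\{\cdot,\cdot,\cdot\}_t)$ is a compatible $3$-Lie algebra; polynomial dependence in $t$ then extends this to every $t\in\K$. Triviality is immediate: the identity says exactly that $\Id+tN$ is a compatible $3$-Lie algebra homomorphism from the deformed structure to the original.

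The main subtlety is confirming that the pullback truncates at degree $t^2$ rather than producing genuine cubic or higher terms — a priori the pullback along $\Id+tN$ of a polynomial bracket could have arbitrarily high order in $t$. This is precisely where the Nijenhuis hypothesis enters decisively: the identity $N[x,y,z]_N=[Nx,Ny,Nz]$ (and its $\{\cdot,\cdot,\cdot\}$-analogue) is exactly what absorbs the cubic terms on the right-hand side into the linear factor $(\Id+tN)$ on the left. A direct verification of the four systems \eqref{eq:2-closed omega bracket-1}--\eqref{eq:2-closed omega bracket-4} is in principle available but would involve substantial bracket bookkeeping; the pullback viewpoint streamlines all such checks into one conceptual identity.
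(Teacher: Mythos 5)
Your proposal is correct, and it takes a genuinely different route from the paper. The paper's proof first identifies $\tilde{\omega}_1=[\cdot,\cdot,\cdot]_N$ and $\tilde{\omega}_2=\{\cdot,\cdot,\cdot\}_N$ and invokes Proposition \ref{pro:Nijenhuis operator property} to get \eqref{eq:2-closed omega bracket-4}, then asserts that \eqref{eq:2-closed omega bracket-1}--\eqref{eq:2-closed omega bracket-3} follow ``by direct calculations'' and that \eqref{eq:nijenhuis-1}--\eqref{eq:nijenhuis-6} are ``straightforward to verify'' --- i.e.\ it checks the cochain-level systems and the trivialization conditions separately, leaving the bracket bookkeeping to the reader. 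You instead establish the single conjugation identity $(\Id+tN)[x,y,z]_t=[(\Id+tN)x,(\Id+tN)y,(\Id+tN)z]$ (and its $\{\cdot,\cdot,\cdot\}$-analogue), whose $t^0,t^1,t^2$ coefficients are definitional and whose $t^3$ coefficient is precisely the Nijenhuis condition $N\tilde{\omega}_1=[N\cdot,N\cdot,N\cdot]$ from Proposition \ref{Nijenhuis-property}; triviality is then immediate, and the compatible $3$-Lie axioms for $[\cdot,\cdot,\cdot]_t,\{\cdot,\cdot,\cdot\}_t$ follow by pulling back along $\Id+tN$ where it is invertible (all but finitely many $t$, since $\g$ is finite dimensional) and extending to all $t$ by polynomiality over the infinite field $\K$. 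Your version buys conceptual economy --- it makes visible that the Nijenhuis hypothesis is exactly the obstruction to the $t^3$ term, and it subsumes \eqref{eq:2-closed omega bracket-1}--\eqref{eq:2-closed omega bracket-4} and \eqref{eq:nijenhuis-1}--\eqref{eq:nijenhuis-6} in one stroke --- at the small cost of the density argument in $t$; the paper's version stays entirely within the Maurer--Cartan/cochain framework it has already set up. Both are complete modulo the same routine expansions.
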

\begin{proof}
Since $N$ is a Nijenhuis operator on  $(\g,[\cdot,\cdot,\cdot],\{\cdot,\cdot,\cdot\})$,
$\tilde{\omega}_1(x,y,z)=[x,y,z]_{N}$ and $\tilde{\omega}_2(x,y,z)=\{x,y,z\}_{N}$, by Proposition \ref{pro:Nijenhuis operator property},
$(\g,\tilde{\omega}_1,\tilde{\omega}_2)$ is also a compatible $3$-Lie algebra. By direct calculations,   we can deduce that
 \eqref{eq:2-closed omega bracket-1}-\eqref{eq:2-closed omega bracket-3}
hold. Then $(\g,[\cdot,\cdot,\cdot]_t,\{\cdot,\cdot,\cdot\}_t)$ is a
$2$-order $1$-parameter deformation of  $(\g,[\cdot,\cdot,\cdot],\{\cdot,\cdot,\cdot\}).$

It is straightforward to verify that \eqref{eq:nijenhuis-1}-\eqref{eq:nijenhuis-6} are satisfied. Thus this $2$-order $1$-parameter deformation is
trivial.
\end{proof}

\section{Abelian extensions of compatible 3-Lie algebras}\label{sec:cohomology II}

In this section, we introduce   cohomologies of a compatible $3$-Lie
algebra with coefficients in arbitrary representation, and use the second cohomology group
to classify abelian extensions of a compatible $3$-Lie algebra.

\subsection{Cohomologies   with coefficients in arbitrary representation}\label{sec:cohomology-gc}
  First, we recall some basic results involving representations and cohomologies of $3$-Lie algebras.

\begin{defi}{\rm (\cite{KA})}
A {\bf representation} of a $3$-Lie algebra $(\g,[\cdot,\cdot,\cdot]_{\g})$ on a vector space $V$ is a linear
map: $\rho:\wedge^{2}\g\rightarrow \gl(V)$ such that for all $x_{1}, x_{2}, x_{3}, x_{4}\in \g,$ the following equalities hold:
\begin{eqnarray}
~\label{representation-1}\rho(x_{1},x_{2})\rho(x_{3},x_{4})&=&\rho([x_{1},x_{2},x_{3}]_{\g},x_{4})+
\rho(x_{3},[x_{1},x_{2},x_{4}]_{\g})+\rho(x_{3},x_{4})\rho(x_{1},x_{2}),\\
~\label{representation-2}\rho(x_{1},[x_{2},x_{3},x_{4}]_{\g})&=&\rho(x_{3},x_{4})\rho(x_{1},x_{2})-\rho(x_{2},x_{4})\rho(x_{1},x_{3})
+\rho(x_{2},x_{3})\rho(x_{1},x_{4}).
\end{eqnarray}
\end{defi}

Let $\g_1$ and $\g_2$ be vector spaces. Denote by $\g^{l,k}$ the subspace of $\wedge^{2} (\g_1\oplus\g_2)\otimes$$\overset{(n)}{\cdots}$$ \otimes \wedge^{2} (\g_1\oplus\g_2)\wedge (\g_1\oplus\g_2)$
which contains the number of $\g_1$ (resp. $\g_2$) is $l$ (resp. $k$).
Then the vector space $\wedge^{2} (\g_1\oplus\g_2)\otimes$$\overset{(n)}{\cdots}$$ \otimes \wedge^{2} (\g_1\oplus\g_2)\wedge (\g_1\oplus\g_2)$ is isomorphic to the direct sum of $\g^{l,k},~l+k=2n+1$. For instance,
$$
\wedge^3(\g_1\oplus\g_2)=\g^{3,0}\oplus\g^{2,1}\oplus\g^{1,2}\oplus\g^{0,3}.
$$
An element $f\in\Hom(\g^{l,k},\g_1)$ (resp. $f\in\Hom(\g^{l,k},\g_2)$) naturally gives an element $\hat{f}\in C^n(\g_1\oplus\g_2,\g_1\oplus\g_2)$, which is called its {\bf lift}.
For example, the lifts of linear maps $\pi:\wedge^3\g_1\lon\g_1,~\rho:\wedge^2\g_1\otimes\g_2\lon\g_2$ are defined by
\begin{eqnarray}
\label{semidirect-1}\hat{\pi}\big((x,u),(y,v),(z,w)\big)&=&(\pi(x,y,z),0),\\
\label{semidirect-2}\hat{\rho}\big((x,u),(y,v),(z,w)\big)&=&(0,\rho(x,y)w+\rho(y,z)u+\rho(z,x)v),
\end{eqnarray}
respectively.
It is straightforward to see that we have the following natural isomorphism
\begin{eqnarray}\label{decomposition}
C^n(\g_1\oplus\g_2,\g_1\oplus\g_2)\cong\sum_{l+k=2n+1}\Hom(\g^{l,k},\g_1)\oplus\sum_{l+k=2n+1}\Hom(\g^{l,k},\g_2).
\end{eqnarray}

\begin{defi}\label{Bidegree}\cite{HouST}
A linear map $f\in \Hom(\underbrace{\wedge^{2} (\g_1\oplus\g_2)\otimes{\cdots} \otimes \wedge^{2} (\g_1\oplus\g_2)}_n\wedge (\g_1\oplus\g_2),\g_1\oplus\g_2)$ has a {\bf bidegree} $l|k$, which is denoted by $||f||=l|k$,   if $f$ satisfies the following four conditions:
\begin{itemize}
\item[\rm(i)] $l+k=2n;$
\item[\rm(ii)] If $X$ is an element in $\g^{l+1,k}$, then $f(X)\in\g_1;$
\item[\rm(iii)] If $X$ is an element in $\g^{l,k+1}$, then $f(X)\in\g_2;$
\item[\rm(iv)] All the other case, $f(X)=0.$
\end{itemize}
\end{defi}
A linear map $f$ is said to be homogeneous  if $f$ has a bidegree.
Linear maps $\hat{\pi},~\hat{\rho}\in C^1(\g_1\oplus\g_2,\g_1\oplus\g_2)$ given in \eqref{semidirect-1} and \eqref{semidirect-2} have the bidegree  $||\hat{\pi}||=||\hat{\rho}||=2|0$. Thus, the sum $\hat{\pi}+\hat{\rho}$
is a homogeneous linear map of the bidegree $2|0$, which is also a multiplication of the semidirect product type,
$$
(\hat{\pi}+\hat{\rho})\big((x,u),(y,v),(z,w)\big)=(\pi(x,y,z),\rho(x,y)w+\rho(y,z)u+\rho(z,x)v).
$$

\begin{lem}\label{important-lemma-2}\cite{HouST}
If $||f||=l_f|k_f$ and $||g||=l_g|k_g$, then $[f,g]_{\Li}$ has the bidegree $(l_f+l_g)|(k_f+k_g).$
\end{lem}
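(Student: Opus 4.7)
The plan is to unpack the bracket $[f,g]_\Li = f\circ g - (-1)^{pq} g\circ f$ and verify the bidegree count term by term. Since $g\circ f$ is just $f\circ g$ with the roles of $f$ and $g$ swapped, which produces the same total count $(l_f+l_g)|(k_f+k_g)$, it suffices to show every summand of $f\circ g$ has this bidegree.

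First I would fix a homogeneous input tuple $(\mathfrak{X}_1,\dots,\mathfrak{X}_{p+q},x)$ lying in $\g^{L,K}$ with $L+K=2(p+q)+1$ and examine one summand of the formula for $f\circ g$. In each summand, $g$ is applied to a packet of $2q+1$ elements of $\g_1\oplus\g_2$ (built from $q$ wedge pairs plus one single element, the latter being either one half of some $\mathfrak{X}_{k+q}$ or the final $x$), producing one output element which is then either wedged with the leftover half of $\mathfrak{X}_{k+q}$ to form a new pair, or placed as the final single argument of $f$. By the assumption $\|g\|=l_g|k_g$, this output is nonzero only when $g$'s inputs have bidegree $(l_g+1,k_g)$, in which case the output lies in $\g_1$, or bidegree $(l_g,k_g+1)$, in which case the output lies in $\g_2$.

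I would then feed the result into $f$ and do the bookkeeping. The $2p+1$ inputs of $f$ consist of the $\mathfrak{X}_i$'s not consumed by $g$, any leftover half of $\mathfrak{X}_{k+q}$, and $g$'s output. If $(L_Q,K_Q)$ denotes the bidegree of $g$'s input and $\varepsilon\in\{0,1\}$ records whether $g$'s output lies in $\g_1$ or $\g_2$, then $f$ sees $(L-L_Q+\varepsilon,\,K-K_Q+1-\varepsilon)$ elements from $(\g_1,\g_2)$. Imposing $\|f\|=l_f|k_f$ together with $L_Q+K_Q=2q+1$ and $L+K=2(p+q)+1$ then forces $(L,K)\in\{(l_f+l_g+1,\,k_f+k_g),\,(l_f+l_g,\,k_f+k_g+1)\}$, with the output of $f\circ g$ lying in $\g_1$ in the first case and in $\g_2$ in the second. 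This is precisely the defining condition for $\|f\circ g\|=(l_f+l_g)|(k_f+k_g)$.

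The three types of summands in the formula for $\circ$ differ only in whether $g$'s output rebuilds a wedge with a leftover $y_i$, a leftover $x_i$, or occupies the final slot, so the count above applies uniformly to each of them. The main obstacle is purely notational: keeping track of the shuffle indices and the three summand types in parallel. Once a single substitution is isolated, the bidegree behaves additively, so the bracket $[f,g]_\Li$ is homogeneous of bidegree $(l_f+l_g)|(k_f+k_g)$ as claimed.
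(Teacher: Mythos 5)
Your argument is correct. The paper itself states this lemma as a citation from \cite{HouST} and gives no proof, and your term-by-term bookkeeping is exactly the standard verification: on a homogeneous input in $\g^{L,K}$, the inner map $g$ returns something nonzero only when its packet has type $(l_g+1,k_g)$ or $(l_g,k_g+1)$, and substituting its output (together with the leftover half of the broken wedge pair, or into the final slot) into $f$ forces $(L,K)$ to be $(l_f+l_g+1,k_f+k_g)$ with value in $\g_1$ or $(l_f+l_g,k_f+k_g+1)$ with value in $\g_2$, uniformly over the three summand types and likewise for $g\circ f$. This is precisely the defining condition for $\|[f,g]_{\Li}\|=(l_f+l_g)|(k_f+k_g)$, so nothing is missing.
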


The following result is well known. See the survey \cite{GLST} for more details.

\begin{pro}\label{3-Lie-graded}
Let $(\g,\pi)$ be a $3$-Lie algebra and $(V;\rho)$ a representation of $\g$. Then we have
\begin{eqnarray}
[\hat{\pi}+\hat{\rho},\hat{\pi}+\hat{\rho}]_{\Li}=0.
\end{eqnarray}
\end{pro}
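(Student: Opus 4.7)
The plan is to expand the bracket by bilinearity, reduce to a small number of bidegree components, and match each surviving piece with a known axiom.

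First, by bilinearity of the graded commutator (and the fact that $\hat{\pi}$ and $\hat{\rho}$ both lie in degree $1$, so $[\hat{\pi},\hat{\rho}]_{\Li}=[\hat{\rho},\hat{\pi}]_{\Li}$), I would write
\begin{equation*}
[\hat{\pi}+\hat{\rho},\,\hat{\pi}+\hat{\rho}]_{\Li}
= [\hat{\pi},\hat{\pi}]_{\Li} + 2[\hat{\pi},\hat{\rho}]_{\Li} + [\hat{\rho},\hat{\rho}]_{\Li}.
\end{equation*}
Since $\|\hat{\pi}\|=\|\hat{\rho}\|=2|0$, Lemma \ref{important-lemma-2} shows that each term, and hence the whole sum, is a homogeneous element of bidegree $4|0$. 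By Definition \ref{Bidegree}, such an element vanishes automatically on $\g^{3,2}$, $\g^{2,3}$, $\g^{1,4}$, $\g^{0,5}$, so it suffices to check that it vanishes on $\g^{5,0}$ (with values in $\g=\g_1$) and on $\g^{4,1}$ (with values in $V=\g_2$).

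On $\g^{5,0}$, take a tuple $\bigl((x_1,0),\dots,(x_5,0)\bigr)$. By \eqref{semidirect-2}, $\hat{\rho}$ vanishes on every argument built from pure $\g_1$-entries, so $[\hat{\pi},\hat{\rho}]_{\Li}$ and $[\hat{\rho},\hat{\rho}]_{\Li}$ automatically vanish in this slot, and a direct unpacking of \eqref{3-Lie-bracket} shows
\begin{equation*}
[\hat{\pi},\hat{\pi}]_{\Li}\bigl((x_1,0),\dots,(x_5,0)\bigr) = \bigl([\pi,\pi]_{\Li}(x_1,\dots,x_5),\,0\bigr),
\end{equation*}
which vanishes by Proposition \ref{pro:3LieMC} because $(\g,\pi)$ is a $3$-Lie algebra.

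On $\g^{4,1}$, substitute a tuple with exactly one $V$-entry (say at some position $p$) and compute each of the three summands via \eqref{3-Lie-bracket}, \eqref{semidirect-1}, \eqref{semidirect-2}. The $[\hat\pi,\hat\pi]_{\Li}$ piece reduces to an expression of the form $\rho(\cdots)\bigl(\pi(\cdots)\bigr)$ with the $V$-entry kept intact, the mixed term $2[\hat\pi,\hat\rho]_{\Li}$ produces compositions like $\rho(\pi(\cdots),\cdot)$ and $\rho(\cdots,\pi(\cdots))$, and $[\hat\rho,\hat\rho]_{\Li}$ contributes products $\rho(\cdots)\rho(\cdots)$. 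After bookkeeping these contributions according to where the $V$-entry sits, the resulting identity in $V$ is exactly the representation axiom \eqref{representation-1} (when the $V$-entry is outside the nested $\pi$) or \eqref{representation-2} (when it is inside), both of which hold by hypothesis.

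The main obstacle, as always in these $n$-ary graded computations, is the sign and index bookkeeping in the second case: one must carefully track the position $p$ of the $V$-entry among the five slots and the Koszul signs produced by the shuffles $\sigma\in\huaS(k-1,q)$ in \eqref{3-Lie-bracket}. A cleaner route that avoids this entirely is to invoke the standard semidirect product construction: $\hat\pi+\hat\rho$ is the ternary bracket on $\g\oplus V$ making it the semidirect product $3$-Lie algebra $\g\ltimes_{\rho}V$, so by Proposition \ref{pro:3LieMC} the Maurer-Cartan equation $[\hat\pi+\hat\rho,\hat\pi+\hat\rho]_{\Li}=0$ is equivalent to the Fundamental Identity for this bracket, which is precisely the conjunction of the Fundamental Identity for $\pi$ and the representation axioms \eqref{representation-1}--\eqref{representation-2}.
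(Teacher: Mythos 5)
The paper does not actually prove this proposition: it is stated as ``well known'' with a pointer to the survey \cite{GLST}, so there is no internal argument to compare yours against. Judged on its own, your proof strategy is sound and would yield a complete proof. The bilinear expansion into $[\hat{\pi},\hat{\pi}]_{\Li}+2[\hat{\pi},\hat{\rho}]_{\Li}+[\hat{\rho},\hat{\rho}]_{\Li}$ is valid (all three summands are indeed symmetric brackets of degree-$1$ elements), the bidegree reduction via Lemma \ref{important-lemma-2} and Definition \ref{Bidegree} correctly cuts the verification down to $\g^{5,0}$ and $\g^{4,1}$, and the $\g^{5,0}$ case is handled correctly. Your closing remark --- that the whole statement is just Proposition \ref{pro:3LieMC} applied to the semidirect product bracket on $\g\oplus V$ --- is the standard argument and is consistent with how the paper itself treats the semidirect product in Lemma \ref{semi-com-3-Lie}.

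One inaccuracy in the $\g^{4,1}$ bookkeeping is worth flagging, though it does not sink the proof. You attribute a contribution of the form $\rho(\cdots)\bigl(\pi(\cdots)\bigr)$ to the $[\hat{\pi},\hat{\pi}]_{\Li}$ piece. In fact $[\hat{\pi},\hat{\pi}]_{\Li}$ vanishes identically on $\g^{4,1}$: by \eqref{semidirect-1} the lift $\hat{\pi}$ annihilates any argument whose relevant entry is a pure $V$-element and always takes values in $\g_1\oplus 0$, so in the composition $\hat{\pi}\circ\hat{\pi}$ the single $V$-entry kills whichever factor it lands in. For the same reason $\hat{\pi}\circ\hat{\rho}=0$, so the mixed term reduces to $\hat{\rho}\circ\hat{\pi}$; it is this term that produces all the $\rho(\pi(\cdots),\cdot)$ and $\rho(\cdot,\pi(\cdots))$ contributions, while $[\hat{\rho},\hat{\rho}]_{\Li}$ produces the $\rho(\cdots)\rho(\cdots)$ products. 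With that correction the identities obtained by placing the $V$-entry in the various slots are exactly \eqref{representation-1} and \eqref{representation-2}, as you claim, and the argument goes through.
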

Let $(\g,\pi)$ be a $3$-Lie algebra and $(V;\rho)$  a representation
of $\g$.
Denote by
$$\huaC_{\Li}^{n}(\g;V)=
\Hom (\underbrace{\wedge^{2} \g\otimes \cdots\otimes \wedge^{2}\g}_{n-1}\wedge \g,V),~(n\geq 1),$$ which is the space of $n$-cochains.
Define the coboundary operator $\dM^{n}_{\pi+\rho}:\huaC_{\Li}^{n}(\g,V)\rightarrow \huaC_{\Li}^{n+1}(\g,V)$ by
\begin{equation}\label{eq:CE-operator}
  \dM^n_{\pi+\rho} f:=(-1)^{n-1}[\hat{\pi}+\hat{\rho},\hat{f}]_{\Li},\quad \forall~f\in \huaC_{\Li}^{n}(\g,V).
\end{equation}
In fact, since $\hat{\pi}+\hat{\rho}\in \huaC_{\Li}^2(\g\oplus V,\g\oplus V)$ where
$||\hat{\pi}+\hat{\rho}||=2|0,$
 and $\hat{f}\in \huaC_{\Li}^{n}(\g\oplus V,\g\oplus V)$ where $||\hat{f}||=(2n-1)|-1,$ by Lemma \ref{important-lemma-2}, we have $\dM^n_{\pi+\rho} f\in \huaC_{\Li}^{n+1}(\g,V) $. By Proposition \ref{3-Lie-graded} and the graded Jacobi identity, we have $\dM^{n+1}_{\pi+\rho}\circ \dM^n_{\pi+\rho}=0$. Thus we obtain a well-defined cochain complex $(\huaC_{\Li}^*(\g,V),\dM^*_{\pi+\rho})$.
More precisely, for all $f\in \huaC_{\Li}^{n}(\g,V)$ and
 for all $~\mathfrak{X}_{i}=x_{i}\wedge y_{i}\in \wedge^{2}\g,~i=1,2,\cdots,n,~x_{n+1}\in \g$,   we have
  \begin{eqnarray*}
&&\dM^n_{\pi+\rho} f(\mathfrak{X}_1,\cdots,\mathfrak{X}_n,x_{n+1})\\
&=&\sum_{1\leq j<k\leq n}(-1)^{j} f(\mathfrak{X}_1,\cdots,\hat{\mathfrak{X}}_{j},\cdots,\mathfrak{X}_{k-1},
[x_j,y_j,x_k]_\g\wedge y_k+x_k\wedge[x_j,y_j,y_k]_\g,
\mathfrak{X}_{k+1},\cdots,\mathfrak{X}_{n},x_{n+1})\\&&
+\sum_{j=1}^{n}(-1)^{j}f(\mathfrak{X}_1,\cdots,\hat{\mathfrak{X}}_{j},\cdots,\mathfrak{X}_{n},
[x_j,y_j,x_{n+1}]_{\g})\\
&&+\sum_{j=1}^{n}(-1)^{j+1}\rho(x_j,y_j)f(\mathfrak{X}_1,\cdots,\hat{\mathfrak{X}}_{j},
\cdots,\mathfrak{X}_{n},x_{n+1})\\&&
+(-1)^{n+1}\big(\rho(y_n,x_{n+1})f(\mathfrak{X}_1,\cdots,\mathfrak{X}_{n-1},x_n)+\rho(x_{n+1},x_n)f(\mathfrak{X}_1,\cdots,\mathfrak{X}_{n-1},y_n)\big).
\end{eqnarray*}
 See  \cite{Casas,Takhtajan1} for more details.

\begin{defi}
 A {\bf representation} of a compatible $3$-Lie algebra
$(\mathfrak{g},[\cdot,\cdot,\cdot],\{\cdot,\cdot,\cdot\})$ on a
vector space $V$  consists of a pair of linear maps
 $\rho, \mu: \wedge^2\mathfrak{g}\rightarrow \mathfrak{gl}(V)$ such that $\rho$ is a representation of the $3$-Lie algebra $(\mathfrak{g},[\cdot,\cdot,\cdot])$ on $V$, $\mu$ is a representation of the $3$-Lie algebra $(\mathfrak{g}, \{\cdot,\cdot,\cdot\})$ on $V$, and for all $x_{1}, x_{2}, x_{3}, x_{4}\in \g,$ the following equalities hold:
 \begin{eqnarray}
   \nonumber&&\rho(\{x_1,x_2,x_3\},x_4)+\mu([x_1,x_2,x_3],x_4)+\rho(x_3,\{x_1,x_2,x_4\})+\mu(x_3,[x_1,x_2,x_4])\\
\label{eq:COM-Representation-1}&=& [\rho(x_1,x_2),\mu(x_3,x_4)]-[\rho(x_3,x_4),\mu(x_1,x_2)],\\
 \nonumber &&\rho(\{x_1,x_2,x_3\},x_4)+\mu([x_1,x_2,x_3],x_4)-\rho(x_3,x_1)\mu(x_2,x_4)-\mu(x_3,x_1)\rho(x_2,x_4)\\
\label{eq:COM-Representation-2}&=&\rho(x_1,x_2)\mu(x_3,x_4)+\mu(x_1,x_2)\rho(x_3,x_4)+\rho(x_2,x_3)\mu(x_1,x_4)+\mu(x_2,x_3)\rho(x_1,x_4).
 \end{eqnarray}
\end{defi}

It is straightforward to obtain

 \begin{lem}\label{semi-com-3-Lie}
Let $(\g,[\cdot,\cdot,\cdot],\{\cdot,\cdot,\cdot\})$ be a compatible $3$-Lie algebra, $V$ a vector space and  $\rho, \mu: \wedge^2\mathfrak{g}\rightarrow \mathfrak{gl}(V)$  be a pair of linear maps. Then $(V;\rho,\mu)$ is a representation of $\g$ if and only if there is a compatible $3$-Lie algebra structure
on the direct sum $\g\oplus V$ of vector spaces, defined by
\begin{eqnarray}
{}\label{semi-direct-com-3-Lie-1}[x_1+v_1,x_2+v_2,x_3+v_3]_{\g\oplus V}&=&[x_1,x_2,x_3]+\rho(x_1,x_2)v_3+\rho(x_2,x_3)v_1+\rho(x_3,x_1)v_2,\\
{}\label{semi-direct-com-3-Lie-2}\{x_1+v_1,x_2+v_2,x_3+v_3\}_{\g\oplus V}&=&\{x_1,x_2,x_3\}+\mu(x_1,x_2)v_3+\mu(x_2,x_3)v_1+\mu(x_3,x_1)v_2,
 \end{eqnarray}
 for all $x_1,x_2,x_3\in\g,~v_1,v_2,v_3\in V.$ The above compatible $3$-Lie algebra $(\g\oplus V, [\cdot,\cdot,\cdot]_{\g\oplus V},\{\cdot,\cdot,\cdot\}_{\g\oplus V})$ is called the {\bf semi-direct product compatible $3$-Lie algebra} and denoted by $\g\ltimes_{\rho,\mu}V.$
 \end{lem}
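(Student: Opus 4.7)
The plan is to reduce the statement to the classical single-bracket case and then identify the additional constraint produced by the compatibility condition. First I would invoke the well-known fact (see \cite{Casas, Takhtajan1}) that for a single $3$-Lie algebra $(\g,[\cdot,\cdot,\cdot])$ and a linear map $\rho:\wedge^2\g\to\gl(V)$, the formula \eqref{semi-direct-com-3-Lie-1} defines a $3$-Lie bracket on $\g\oplus V$ if and only if $\rho$ is a representation of $(\g,[\cdot,\cdot,\cdot])$; the analogous statement holds for $\mu$ relative to $\{\cdot,\cdot,\cdot\}$. Hence $(\g\oplus V,[\cdot,\cdot,\cdot]_{\g\oplus V})$ and $(\g\oplus V,\{\cdot,\cdot,\cdot\}_{\g\oplus V})$ are both $3$-Lie algebras precisely when $\rho$ and $\mu$ are representations of the two underlying $3$-Lie brackets on $\g$ respectively.

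What remains is to analyse the compatibility condition \eqref{eq:cl} for the two semidirect-product brackets on $\g\oplus V$. I would substitute $x_i+v_i$, $1\le i\le 5$, into \eqref{eq:cl} and expand using \eqref{semi-direct-com-3-Lie-1} and \eqref{semi-direct-com-3-Lie-2}, then project onto the summands $\g$ and $V$. The $\g$-component is precisely the compatibility of $(\g,[\cdot,\cdot,\cdot],\{\cdot,\cdot,\cdot\})$ itself and thus holds automatically. For the $V$-component, each cubic bracket of $\g\oplus V$ pairs exactly one $V$-entry with two $\g$-entries, so every term containing two or more $v_i$ inside the same cubic bracket vanishes; the remaining expression is multilinear in $(v_1,\dots,v_5)$ with coefficients built from $\rho$, $\mu$, and the two brackets on $\g$.

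I would then isolate the coefficient of each $v_i$. Using the skew-symmetry of the cubic brackets, the five identities thus obtained collapse to essentially two: the identities coming from the outer positions $v_1,v_2$ are interchanged by the skew-symmetry of $\rho,\mu$ in their first two arguments, while those coming from the inner positions $v_3,v_4,v_5$ are permuted by the total skew-symmetry of $\{x_3,x_4,x_5\}$ and $[x_3,x_4,x_5]$. After a short rearrangement, the two representative identities are exactly \eqref{eq:COM-Representation-1} and \eqref{eq:COM-Representation-2}. The converse direction reassembles the same computation in reverse.

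The only genuine obstacle is organisational: the expansion produces many terms and one must track the signs arising from the skew-symmetries together with the six-fold structure of the right-hand side of \eqref{eq:cl}. There is no conceptual difficulty, which is why the authors state the lemma with the remark that the verification is straightforward.
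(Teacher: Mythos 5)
Your proposal is correct and follows essentially the same route as the paper: the paper likewise takes the two single-bracket semi-direct products for granted and then expands the compatibility identity \eqref{eq:cl} on elements $x_i+v_i$, reading off the coefficients of $v_3,v_4,v_5$ as instances of \eqref{eq:COM-Representation-1} and those of $v_1,v_2$ as instances of \eqref{eq:COM-Representation-2}. No substantive difference.
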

 \begin{proof}
 Let $(V;\rho,\mu)$ be a representation of the compatible $3$-Lie algebra $(\g,[\cdot,\cdot,\cdot],\{\cdot,\cdot,\cdot\}).$ It is obviously that $(\g\oplus V,[\cdot,\cdot,\cdot]_{\g\oplus V})$ and $(\g\oplus V,\{\cdot,\cdot,\cdot\}_{\g\oplus V})$ are $3$-Lie algebras respectively.

For all $x_i\in\g$ and $v_i\in V, 1\leq i\leq5$, by \eqref{eq:cl}, \eqref{eq:COM-Representation-1}-\eqref{semi-direct-com-3-Lie-2}, we have
  \begin{eqnarray*}
  &&[\{x_1+v_1,x_2+v_2,x_3+v_3\}_{\g\oplus V},x_4+v_4,x_5+v_5]_{\g\oplus V}\\&&
  +[x_3+v_3,\{x_1+v_1,x_2+v_2,x_4+v_4\}_{\g\oplus V},x_5+v_5]_{\g\oplus V}\\&&
  +[x_3+v_3,x_4+v_4,\{x_1+v_1,x_2+v_2,x_5+v_5\}_{\g\oplus V}]_{\g\oplus V}\\
   &&+\{[x_1+v_1,x_2+v_2,x_3+v_3]_{\g\oplus V},x_4+v_4,x_5+v_5\}_{\g\oplus V}\\&&
   +\{x_3+v_3,[x_1+v_1,x_2+v_2,x_4+v_4]_{\g\oplus V},x_5+v_5\}_{\g\oplus V}\\
   &&+\{x_3+v_3,x_4+v_4,[x_1+v_1,x_2+v_2,x_5+v_5]_{\g\oplus V}\}_{\g\oplus V}\\&&-[x_1+v_1,x_2+v_2,\{x_3+v_3,x_4+v_4,x_5+v_5\}_{\g\oplus V}]_{\g\oplus V}\\
   &&-\{x_1+v_1,x_2+v_2,[x_3+v_3,x_4+v_4,x_5+v_5]_{\g\oplus V}\}_{\g\oplus V}\\
   &=&[\{x_1,x_2,x_3\},x_4,x_5]+[x_3,\{x_1,x_2,x_4\},x_5]+[x_3,x_4,\{x_1,x_2,x_5\}]\\
   &&+\{[x_1,x_2,x_3],x_4,x_5\}+\{x_3,[x_1,x_2,x_4,x_5\}+\{x_3,x_4,[x_1,x_2,x_5]\}\\
   &&-[x_1,x_2,\{x_3,x_4,x_5\}]-\{x_1,x_2,[x_3,x_4,x_5]\}\\
   &&+\Big(\rho(\{x_1,x_2,x_3\},x_4)+\mu([x_1,x_2,x_3],x_4)+\rho(x_3,\{x_1,x_2,x_4\})\\
   &&+\mu(x_3,[x_1,x_2,x_4])-[\rho(x_1,x_2),\mu(x_3,x_4)]+[\rho(x_3,x_4),\mu(x_1,x_2)]\Big)v_5\\
   &&+\Big(\rho(\{x_1,x_2,x_3\},x_5)+\rho(x_4,\{x_1,x_2,x_5\})+\mu([x_1,x_2,x_4],x_5)\\
   &&+\mu(x_4,[x_1,x_2,x_5])-[\rho(x_1,x_2),\mu(x_4,x_5)]+[\rho(x_4,x_5),\mu(x_1,x_2)]\Big)v_3\\
   &&+\Big(\rho(x_5,\{x_1,x_2,x_3\})+\mu(x_5,[x_1,x_2,x_3])+\rho(\{x_1,x_2,x_5\},x_3)\\
   &&+\mu([x_1,x_2,x_5],x_3)-[\rho(x_1,x_2),\mu(x_5,x_3)]+[\rho(x_5,x_3),\mu(x_1,x_2)]\Big)v_4\\
   &&+\Big(\rho(x_4,x_5)\mu(x_2,x_3)+\rho(x_5,x_3)\mu(x_2,x_4)+\rho(x_3,x_4)\mu(x_2,x_5)-\rho(x_2,\{x_3,x_4,x_5\})\\
   &&+\mu(x_4,x_5)\rho(x_2,x_3)+\mu(x_5,x_3)\rho(x_2,x_4)+\mu(x_3,x_4)\rho(x_2,x_5)-\mu(x_2,[x_3,x_4,x_5])\Big)v_1\\
   &&+\Big(\rho(x_4,x_5)\mu(x_3,x_1)+\rho(x_5,x_3)\mu(x_4,x_2)+\rho(x_3,x_4)\mu(x_5,x_1)-\rho(\{x_3,x_4,x_5\},x_1)\\
   &&+\mu(x_4,x_5)\rho(x_3,x_1)+\mu(x_5,x_3)\rho(x_4,x_2)+\mu(x_3,x_4)\rho(x_5,x_1)-\mu([x_3,x_4,x_5],x_1)\Big)v_2\\
   &=&0.
  \end{eqnarray*}
Thus $(\g\oplus V, [\cdot,\cdot,\cdot]_{\g\oplus V},\{\cdot,\cdot,\cdot\}_{\g\oplus V})$ is a compatible $3$-Lie algebra if and only if $(V;\rho,\mu)$ is a representation of $\g$.
 \end{proof}

Let $(V,\rho,\mu)$ be a representation of a compatible $3$-Lie algebra $(\g,[\cdot,\cdot,\cdot],\{\cdot,\cdot,\cdot\})$. Define $\rho^*, \mu^*:\wedge^2\g\rightarrow\gl(V^*)$ by
\begin{eqnarray*}
\langle\rho^*(x_1,x_2)\alpha,v\rangle&=&-\langle\alpha,\rho(x_1,x_2)v\rangle,\\
\langle\mu^*(x_1,x_2)\alpha,v\rangle&=&-\langle\alpha,\mu(x_1,x_2)v\rangle,\quad \forall \alpha\in V^*,x_1,x_2\in \g,v\in V.
 \end{eqnarray*}
\begin{pro}
With the above notations, $(V^*,\rho^*,\mu^*)$ is a representation of $\g$, called the dual representation.
\end{pro}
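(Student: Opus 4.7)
The plan is to verify the three defining conditions for $(V^*,\rho^*,\mu^*)$ to be a representation of the compatible $3$-Lie algebra: that $\rho^*$ is a representation of $(\g,[\cdot,\cdot,\cdot])$, that $\mu^*$ is a representation of $(\g,\{\cdot,\cdot,\cdot\})$, and that the pair $(\rho^*,\mu^*)$ satisfies the compatibility identities \eqref{eq:COM-Representation-1} and \eqref{eq:COM-Representation-2}.

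The first two items amount to the well-known statement that the dual of a representation of a $3$-Lie algebra is again a representation. I would verify them directly by pairing each of the axioms \eqref{representation-1}--\eqref{representation-2} for $\rho^*$ (resp.\ $\mu^*$) with a test vector $v\in V$ and a functional $\alpha\in V^*$, using the basic identity $\langle \rho^*(x,y)\alpha,v\rangle=-\langle\alpha,\rho(x,y)v\rangle$ together with the observation that a composition of two dual operators reverses composition order on functionals, so that both sides of each axiom acquire matching sign patterns and the identity reduces to the original axiom for $\rho$ (resp.\ $\mu$).

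For the compatibility identities, the strategy is the same: pair both sides of the proposed \eqref{eq:COM-Representation-1} for $(\rho^*,\mu^*)$ against $\alpha\otimes v$. The single-operator terms $\rho^*(\{x_1,x_2,x_3\},x_4)$ and $\mu^*([x_1,x_2,x_3],x_4)$ each contribute an overall minus sign from the definition of the dual, whereas the commutator terms $[\rho^*(x_1,x_2),\mu^*(x_3,x_4)]$ also acquire an overall minus, because the two individual minus signs from dualizing each factor cancel while the commutator of functionals reverses composition order. This synchronized sign flip turns the dualized identity into the original \eqref{eq:COM-Representation-1}, which holds by hypothesis. The verification of \eqref{eq:COM-Representation-2} follows the same pattern: each product $\rho^*(x_i,x_j)\mu^*(x_k,x_l)$ dualizes to $\alpha\mapsto\alpha\circ\mu(x_k,x_l)\rho(x_i,x_j)$ with order reversed, and after invoking the skew-symmetry of $\rho$ and $\mu$ in their two arguments to re-align slots, the identity reduces to \eqref{eq:COM-Representation-2}.

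The main obstacle is the bookkeeping in the compatibility checks, particularly for \eqref{eq:COM-Representation-2}, where the index placements in $\rho(x_i,x_j)$ and $\mu(x_k,x_l)$ are not symmetric. A cleaner route, which I would ultimately prefer, is first to establish a representation analogue of Proposition~\ref{pro:lsymNR}: that $(V;\rho,\mu)$ is a representation of the compatible $3$-Lie algebra if and only if for every $k_1,k_2\in\K$ the linear combination $k_1\rho+k_2\mu$ is a representation of the $3$-Lie algebra $(\g,k_1[\cdot,\cdot,\cdot]+k_2\{\cdot,\cdot,\cdot\})$. This is verified by expanding each axiom \eqref{representation-1}--\eqref{representation-2} applied to $k_1\rho+k_2\mu$ and separating the $k_1^2$, $k_1k_2$, and $k_2^2$ coefficients; the $k_1^2$ and $k_2^2$ parts recover that $\rho$ and $\mu$ are each $3$-Lie representations, while the cross term in $k_1k_2$ reproduces precisely the compatibility identities \eqref{eq:COM-Representation-1} and \eqref{eq:COM-Representation-2}. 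Granting this characterization, the proposition reduces at once to the single-bracket case: for each scalar choice, $(k_1\rho+k_2\mu)^*=k_1\rho^*+k_2\mu^*$ is a representation of $(\g,k_1[\cdot,\cdot,\cdot]+k_2\{\cdot,\cdot,\cdot\})$ on $V^*$, and applying the characterization in reverse upgrades $(V^*;\rho^*,\mu^*)$ to a representation of the compatible $3$-Lie algebra.
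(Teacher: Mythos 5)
Your preferred route---reducing to the single-bracket case via the characterization that $(V;\rho,\mu)$ is a representation of the compatible $3$-Lie algebra if and only if $k_1\rho+k_2\mu$ is a representation of $(\g,k_1[\cdot,\cdot,\cdot]+k_2\{\cdot,\cdot,\cdot\})$ for all $k_1,k_2$---is a sound and clean way to organize the compatibility checks (the paper itself offers no argument beyond ``it is straightforward''). But both of your routes ultimately rest on the claim that the dual of a representation of a single $3$-Lie algebra is again a representation, and your justification of that claim has a genuine gap. Dualizing condition \eqref{representation-1} is harmless because that condition is a commutator identity and $A\mapsto -A^{*}$ respects commutators. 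Condition \eqref{representation-2} is not: under dualization the product $\rho(x_3,x_4)\rho(x_1,x_2)$ becomes the transpose of $\rho(x_1,x_2)\rho(x_3,x_4)$, with the factors in the \emph{opposite order}, and skew-symmetry of $\rho$ in its two slots cannot restore the order of composition of two different operators. So the dualized \eqref{representation-2} does not ``reduce to the original axiom''; it reduces to the order-reversed identity, and the conjunction of the two is equivalent to the additional identity
\begin{equation*}
\rho(x_1,x_2)\rho(x_3,x_4)+\rho(x_3,x_4)\rho(x_1,x_2)-\rho(x_1,x_3)\rho(x_2,x_4)-\rho(x_2,x_4)\rho(x_1,x_3)+\rho(x_1,x_4)\rho(x_2,x_3)+\rho(x_2,x_3)\rho(x_1,x_4)=0,
\end{equation*}
which is not one of the axioms. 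The same order-reversal problem invalidates your sign bookkeeping for \eqref{eq:COM-Representation-2} in the direct route.

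The missing identity is in fact a consequence of \eqref{representation-1} and \eqref{representation-2}, but proving it requires a further argument. Let $E$ denote the left-hand side of the display and set $D=\sum_{i=1}^{4}(-1)^{i+1}\rho\bigl(x_i,[x_1,\dots,\widehat{x_i},\dots,x_4]_{\g}\bigr)$. Expanding each term of $D$ by \eqref{representation-2} and using skew-symmetry of $\rho$ gives $D=2E$; on the other hand, writing each anticommutator in $E$ as $2BA+[A,B]$ and applying \eqref{representation-2} to the $2BA$ part and \eqref{representation-1} to the commutator part gives $E=2D$. Hence $D=4D$, so $D=E=0$ in characteristic zero. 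With this lemma supplied (together with its mixed $\rho$--$\mu$ analogue, which follows by polarizing the same identity for $k_1\rho+k_2\mu$), your linear-combination route goes through and the proposition follows. Without it, the verification of \eqref{representation-2} for $\rho^{*}$ is incomplete.
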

\begin{proof}
It is straightforward.
\end{proof}

\begin{ex}{\rm Let $(\g,[\cdot,\cdot,\cdot],\{\cdot,\cdot,\cdot\})$ be a compatible $3$-Lie algebra. For $x,y\in \g,$ define $\ad,\AD:\wedge^2\g\lon\gl(\g)$ by
\begin{eqnarray}\label{com-adjoint-rep}
\ad_{x,y}z=[x,y,z] \quad \mbox{and} \quad\AD_{x,y}z=\{x,y,z\} \quad
\forall z\in \g.
\end{eqnarray}
 Then $(\g,\ad,\AD)$ is a representation of $(\mathfrak{g},[\cdot,\cdot,\cdot],\{\cdot,\cdot,\cdot\})$, which is called the {\bf adjoint representation}. The dual representation $(\g,\ad^*,\AD^*)$ of the adjoint representation $(\g,\ad,\AD)$ of a compatible $3$-Lie algebra $\g$ is called the {\bf coadjoint representation}.}
 \end{ex}

Let $(V;\rho,\mu)$  be a representation of a compatible $3$-Lie algebra $(\g,[\cdot,\cdot,\cdot],\{\cdot,\cdot,\cdot\}).$
For convenience, we use $\pi_1, \pi_2 :\wedge^3\g\rightarrow\g$ to indicate the $3$-Lie bracket $[\cdot,\cdot,\cdot]$ and $\{\cdot,\cdot,\cdot\}$ respectively.

\begin{pro}
 With the above notations,  the pair $(\hat{\pi}_1+\hat{\rho},\hat{\pi}_2+\hat{\mu})$ is a Maurer-Cartan element of the bidifferential graded Lie algebra $(C^*(\g\oplus V,\g\oplus V),[\cdot,\cdot]_\Li,{\rm d}_1=0,{\rm d}_2=0)$, i.e.,
\begin{eqnarray}\label{eq:Maurer-Cartatn 12}
  [\hat{\pi}_1+\hat{\rho},\hat{\pi}_1+\hat{\rho}]_{\Li}=0,\quad[\hat{\pi}_2+\hat{\mu},\hat{\pi}_2+\hat{\mu}]_{\Li}=0,\quad [\hat{\pi}_1+\hat{\rho},\hat{\pi}_2+\hat{\mu}]_{\Li}=0.
\end{eqnarray}
\end{pro}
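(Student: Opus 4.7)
\medskip
\noindent\textbf{Proof plan.} My approach is to reduce the three identities in \eqref{eq:Maurer-Cartatn 12} to Theorem \ref{pro:lsymNR} applied to the semidirect product compatible $3$-Lie algebra $\g\ltimes_{\rho,\mu}V$ from Lemma \ref{semi-com-3-Lie}. The key observation is that the lifts $\hat{\pi}_1+\hat{\rho}$ and $\hat{\pi}_2+\hat{\mu}$ in $C^1(\g\oplus V,\g\oplus V)$ are precisely the cochains corresponding to the semidirect $3$-Lie brackets $[\cdot,\cdot,\cdot]_{\g\oplus V}$ and $\{\cdot,\cdot,\cdot\}_{\g\oplus V}$ defined in \eqref{semi-direct-com-3-Lie-1}--\eqref{semi-direct-com-3-Lie-2}, as one verifies directly from \eqref{semidirect-1}--\eqref{semidirect-2}.

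\medskip
\noindent First, I would dispose of the two diagonal equations. Since $(V;\rho)$ is a representation of the $3$-Lie algebra $(\g,\pi_1)$, Proposition \ref{3-Lie-graded} gives $[\hat{\pi}_1+\hat{\rho},\hat{\pi}_1+\hat{\rho}]_\Li=0$; likewise $[\hat{\pi}_2+\hat{\mu},\hat{\pi}_2+\hat{\mu}]_\Li=0$ since $(V;\mu)$ is a representation of $(\g,\pi_2)$. Equivalently, these two identities are the statement (via Proposition \ref{pro:3LieMC}) that $\hat{\pi}_1+\hat{\rho}$ and $\hat{\pi}_2+\hat{\mu}$ each define $3$-Lie algebra structures on $\g\oplus V$, which is exactly the content of Lemma \ref{semi-com-3-Lie} for the two separate $3$-Lie brackets.

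\medskip
\noindent The main point is the cross-bracket identity $[\hat{\pi}_1+\hat{\rho},\hat{\pi}_2+\hat{\mu}]_\Li=0$. By Lemma \ref{semi-com-3-Lie}, the compatibility condition \eqref{eq:COM-Representation-1}--\eqref{eq:COM-Representation-2} for a representation of a compatible $3$-Lie algebra is equivalent to $(\g\oplus V,[\cdot,\cdot,\cdot]_{\g\oplus V},\{\cdot,\cdot,\cdot\}_{\g\oplus V})$ being a compatible $3$-Lie algebra. Applying Theorem \ref{pro:lsymNR} to this compatible $3$-Lie algebra structure on $\g\oplus V$, the pair $(\hat{\pi}_1+\hat{\rho},\hat{\pi}_2+\hat{\mu})$ is a Maurer-Cartan element of the bidifferential graded Lie algebra $(C^*(\g\oplus V,\g\oplus V),[\cdot,\cdot]_\Li,{\rm d}_1=0,{\rm d}_2=0)$, which in particular yields the cross-bracket identity.

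\medskip
\noindent The only mild subtlety, and the one step I would spell out, is the identification of $\hat{\pi}_1+\hat{\rho}$ with the semidirect bracket $[\cdot,\cdot,\cdot]_{\g\oplus V}$: unpacking \eqref{semidirect-2}, one checks that $\hat{\rho}\big((x,u),(y,v),(z,w)\big)=(0,\rho(x,y)w+\rho(y,z)u+\rho(z,x)v)$ matches the $V$-component of \eqref{semi-direct-com-3-Lie-1}, and similarly for $\hat{\mu}$. Once this bookkeeping is settled, no further computation is required and the proposition follows at once.
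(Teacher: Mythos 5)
Your proposal is correct and follows essentially the same route as the paper: both reduce the claim to the fact that the semidirect product $\g\ltimes_{\rho,\mu}V$ is a compatible $3$-Lie algebra (Lemma \ref{semi-com-3-Lie}), invoke Proposition \ref{3-Lie-graded} for the two diagonal identities, and identify the compatibility condition on $\g\oplus V$ with the vanishing of the cross bracket via Theorem \ref{pro:lsymNR}. Your explicit remark on identifying $\hat{\pi}_1+\hat{\rho}$ with the semidirect bracket is a useful bookkeeping step that the paper leaves implicit.
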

\begin{proof}
  By Lemma \ref{semi-com-3-Lie} and Proposition \ref{3-Lie-graded}, since $(\g\oplus V, [\cdot,\cdot,\cdot]_{\g\oplus V},\{\cdot,\cdot,\cdot\}_{\g\oplus V})$ is a compatible $3$-Lie algebra, thus we have $[\hat{\pi}_1+\hat{\rho},\hat{\pi}_1+\hat{\rho}]_{\Li}=0,$
    $[\hat{\pi}_2+\hat{\mu},\hat{\pi}_2+\hat{\mu}]_{\Li}=0$
 and the compatibility condition between $[\cdot,\cdot,\cdot]_{\g\oplus V}$ and $\{\cdot,\cdot,\cdot\}_{\g\oplus V}$ is equivalent to  $[\hat{\pi}_1+\hat{\rho},\hat{\pi}_2+\hat{\mu}]_{\Li}=0.$
 Thus $(\hat{\pi}_1+\hat{\rho},\hat{\pi}_2+\hat{\mu})$ is a Maurer-Cartan element of the bidifferential graded Lie algebra $(C^*(\g\oplus V,\g\oplus V),[\cdot,\cdot]_\Li,{\rm d}_1=0,{\rm d}_2=0)$.
\end{proof}

Define the space of $n$-cochains $\huaC_{\cC}^n(\g,V)$, $(n\ge 1)$ by
$$\huaC_{\cC}^n(\g,V)=\underbrace{\huaC_{\Li}^{n}(\g,V)\oplus \huaC_{\Li}^{n}(\g,V)\cdots\oplus \huaC_{\Li}^{n}(\g,V)}_{\mbox{$n$~{copies}}}.$$

Define $\delta^1:\huaC_{\cC}^{1}(\g,V)\longrightarrow \huaC_{\cC}^{2}(\g,V)$  by
$$\delta^1 f=(\dM^1_{\pi_1+\rho} f,\dM^1_{\pi_2+\mu} f),\quad f\in \Hom(\g,V)$$
and
define $\delta^n:\huaC_{\cC}^{n}(\g,V)\longrightarrow \huaC_{\cC}^{n+1}(\g,V)$ for $n>1$ by
\begin{eqnarray*}
    \delta^n(\omega_1,\cdots,\omega_{n})=(\dM^n_{\pi_1+\rho}\omega_1,\cdots,\underbrace{\dM^n_{\pi_2+\mu}\omega_{i-1}+\dM^n_{\pi_1+\rho}\omega_i}_i,\cdots,\dM^n_{\pi_2+\mu}\omega_{n}),
\end{eqnarray*}
where $(\omega_1,\omega_2,\cdots,\omega_n)\in\huaC_{\cC}^n(\g,V)$ and $2\leq i\leq n$.

 By the graded Jacobi identity, \eqref{eq:Maurer-Cartatn 12} is also equivalent to
\begin{equation}\label{eq:MC-direct sum}
\dM^{n+1}_{\pi_1+\rho}\circ\dM^n_{\pi_1+\rho}=0,\quad \dM^{n+1}_{\pi_2+\mu}\circ\dM^n_{\pi_2+\mu}=0,\quad \dM^{n+1}_{\pi_1+\rho}\circ \dM^n_{\pi_2+\mu}+\dM^{n+1}_{\pi_2+\mu}\circ \dM^n_{\pi_1+\rho}=0.
\end{equation}

Similar to the proof of Theorem \ref{thm:cohomology of CLA} or using the relation \eqref{eq:MC-direct sum}, we also have $\delta^{n+1}\circ\delta^n=0$. Thus $(\huaC_{\cC}^*(\g,V)=\oplus_{n=1}^\infty \huaC_{\cC}^n(\g,V),\delta^*)$ is a cochain complex.

\begin{defi}Let $(\mathfrak{g},[\cdot,\cdot,\cdot],\{\cdot,\cdot,\cdot\})$ be a compatible $3$-Lie algebra and $(V;\rho,\mu)$  a
representation.
  The cohomology of the cochain complex $(\huaC_{\cC}^*(\g,V),\delta^*)$  is  called {\bf the cohomology of  $(\g,[\cdot,\cdot,\cdot],\{\cdot,\cdot,\cdot\})$ with coefficients in $V$}. Denote the $n$-th cohomology group by $\huaH^n(\g;V).$ 

\end{defi}

\subsection{Abelian extensions of compatible $3$-Lie algebras}

In this subsection, we study abelian extensions of a compatible $3$-Lie
algebra and show that they are classified by the second
cohomology group.
\begin{defi}\label{defi:isomorphic}
\begin{itemize}
\item[\rm(1)] Let $(\g,[\cdot,\cdot,\cdot]_\g,\{\cdot,\cdot,\cdot\}_\g)$, $(V,[\cdot,\cdot,\cdot]_V,\{\cdot,\cdot,\cdot\}_V)$, $(\hat{\g},[\cdot,\cdot,\cdot]_{\hat{\g}},\{\cdot,\cdot,\cdot\}_{\hat{\g}})$ be compatible $3$-Lie algebras. An extension of compatible $3$-Lie algebras is a short exact sequence of compatible $3$-Lie algebras:
\begin{eqnarray*}
0\longrightarrow V\stackrel{\id}{\longrightarrow}\hat{\g}\stackrel{\p}\longrightarrow\g\longrightarrow0.
\end{eqnarray*}
We say that $\hat{\g}$ is an  {\bf extension} of $\g$ by $V$. An
extension of $\g$ by $V$ is called {\bf abelian}  if  \begin{equation}\label{eq:ac} [u,v,\alpha]_{\hat{\g}}=\{u,v,\alpha\}_{\hat{\g}}=0,\quad \forall  u,v\in V, \alpha\in\hat{\g}.\end{equation}
Here we identify $V$ with its image $\id(V)$, and omit the notation $\id$ for convenience.
\item[\rm(2)]
A {\bf linear section} of $\hat{\g}$
 is a linear map  $\sigma:\g\rightarrow\hat{\g}$  satisfying $\p\circ \sigma=\Id_{\g}$.
\item[\rm(3)] Two abelian extensions $\hat{\g}_1$ and $\hat{\g}_2$ of $\g$
by $V$   are said to be {\bf isomorphic} if there exists a
compatible $3$-Lie algebra   isomorphism
$\theta:\hat{\g}_1\longrightarrow \hat{\g}_2$ such that we have
the following commutative diagram:
\begin{equation}\label{diagram1}
\begin{array}{ccccccccc}
0&\longrightarrow& V&\stackrel{\id_1}\longrightarrow&\hat{\g}_1&\stackrel{\p_1}\longrightarrow&\g&\longrightarrow&0\\
 &            &\Big\|&       &\theta\Big\downarrow&          &\Big\|& &\\
 0&\longrightarrow&V&\stackrel{\id_2}\longrightarrow&\hat{\g}_2&\stackrel{\p_2}\longrightarrow&\g&\longrightarrow&0.
 \end{array}\end{equation}
\end{itemize}
\end{defi}

Let $\hat{\g}$ be an abelian extension of $\g$ by $V$, and
$\sigma:\g\rightarrow\hat{\g}$ a linear section. Define linear maps $\rho,\mu:\wedge^2\g\longrightarrow\gl(V)$ by
\begin{eqnarray}
 \label{eq:res3} \rho(x,y)u&=&[\sigma(x),\sigma(y),u]_{\hat{\g}},\\
 \label{eq:res4} \mu(x,y)u&=&\{\sigma(x),\sigma(y),u\}_{\hat{\g}},
\end{eqnarray}
for all $x,y,\in\g,~u\in V$.
\begin{pro}
  With the above notations, $(V;\rho,\mu)$ is a representation of the compatible $3$-Lie algebra $(\g,[\cdot,\cdot,\cdot]_\g,\{\cdot,\cdot,\cdot\}_\g)$, and does not depend on the choice of linear sections $\sigma$.
  Moreover, isomorphic abelian extensions give the same representation of $\g$ on $V.$
\end{pro}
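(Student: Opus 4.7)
Write every element of $\hat{\g}$ uniquely as $\sigma(x)+u$ with $x\in\g$ and $u\in V$. Because $\p$ is a homomorphism of compatible $3$-Lie algebras, there exist trilinear maps $\omega,\eta\colon\wedge^3\g\to V$ with $[\sigma(x),\sigma(y),\sigma(z)]_{\hat{\g}}=\sigma([x,y,z]_\g)+\omega(x,y,z)$ and $\{\sigma(x),\sigma(y),\sigma(z)\}_{\hat{\g}}=\sigma(\{x,y,z\}_\g)+\eta(x,y,z)$. By \eqref{eq:ac} together with the skew-symmetry of $3$-brackets, any bracket in $\hat{\g}$ of three elements in which at least two arguments lie in $V$ vanishes; this will be the workhorse of the argument.

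The plan is to check the representation axioms one at a time. First I would verify \eqref{representation-1} for $\rho$ by applying the Fundamental Identity of $(\hat{\g},[\cdot,\cdot,\cdot]_{\hat{\g}})$ to $\sigma(x_1),\sigma(x_2),\sigma(x_3),\sigma(x_4),u$. Each nested bracket in the expansion contains exactly one element of $V$, namely $u$; the cocycle contributions $\omega(\cdot,\cdot,\cdot)$ arising from the decomposition of $[\sigma(x),\sigma(y),\sigma(z)]_{\hat{\g}}$ lie in $V$ and therefore, together with $u$, generate brackets with two $V$-arguments that vanish. What survives is precisely \eqref{representation-1}, and \eqref{representation-2} then follows by the skew-symmetry of the $3$-bracket. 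The identical argument applied to $\{\cdot,\cdot,\cdot\}_{\hat{\g}}$ shows $\mu$ is a representation of $(\g,\{\cdot,\cdot,\cdot\}_\g)$.

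For the two compatibility identities, substitute $u\in V$ into the compatibility relation \eqref{eq:cl} of $\hat{\g}$ in two different positions. Placing $u$ as the fifth argument yields, after annihilating all $\omega$- and $\eta$-contributions by the abelian condition, exactly \eqref{eq:COM-Representation-1}. Placing $u$ in one of the first two ``acting'' positions (say $x_2\mapsto u$) produces an identity whose left-hand side carries $\rho(x_1,\{x_2,x_3,x_4\}_\g)$ together with six mixed $\rho\mu$-type terms on the right; applying the skew-symmetries $\rho(a,b)=-\rho(b,a)$, $\mu(a,b)=-\mu(b,a)$ and a cyclic relabeling of indices transforms this identity into \eqref{eq:COM-Representation-2}. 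Tracking signs in this last step is the most delicate part of the verification, but it is just a finite computation.

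Finally, to see that $(\rho,\mu)$ is independent of the section, let $\sigma'$ be a second section and put $\delta(x):=\sigma'(x)-\sigma(x)\in V$. Expanding $[\sigma'(x),\sigma'(y),u]_{\hat{\g}}$ by trilinearity, every term except $[\sigma(x),\sigma(y),u]_{\hat{\g}}$ contains at least two $V$-arguments and hence vanishes by \eqref{eq:ac}; the same holds for $\mu$. For isomorphic extensions, any compatible $3$-Lie algebra isomorphism $\theta\colon\hat{\g}_1\to\hat{\g}_2$ fitting into \eqref{diagram1} must restrict to the identity on $V$. Given a section $\sigma_1$ of $\p_1$, the map $\sigma_2:=\theta\circ\sigma_1$ is a section of $\p_2$, and the computation $\rho_2(x,y)u=[\sigma_2(x),\sigma_2(y),u]_{\hat{\g}_2}=\theta[\sigma_1(x),\sigma_1(y),u]_{\hat{\g}_1}=[\sigma_1(x),\sigma_1(y),u]_{\hat{\g}_1}=\rho_1(x,y)u$ (using $\theta|_V=\id_V$) together with its analogue for $\mu$ shows that the two extensions induce the same representation.
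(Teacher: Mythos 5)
Your proposal is correct and follows essentially the same route as the paper: define $\rho,\mu$ via the section, apply the Fundamental Identity and the compatibility identity of $\hat{\g}$ to tuples containing a single element of $V$, and use the abelian condition \eqref{eq:ac} (with skew-symmetry) to kill every term carrying two $V$-arguments, including the cocycle contributions; the section-independence and isomorphism-invariance arguments are likewise the paper's. One small imprecision: \eqref{representation-2} is not a formal consequence of \eqref{representation-1} and skew-symmetry --- it requires a second application of the Fundamental Identity in $\hat{\g}$ with $u$ inserted in one of the first two (``acting'') slots --- but your general method plainly covers this case, so nothing essential is missing.
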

\begin{proof}
For all $x_1, x_2, x_3, x_4\in\g,u\in V,$ by \eqref{eq:jacobi1}, \eqref{representation-1},  \eqref{representation-2}   and the abelian condition \eqref{eq:ac}, we have
\begin{eqnarray*}
&&\rho(x_1,x_2)\rho(x_3,x_4)u-\rho(x_3,x_4)\rho(x_1,x_2)u-\rho([x_1,x_2,x_3]_{\g},x_4)u-\rho(x_3,[x_1,x_2,x_4]_{\g})u\\
&=&[\sigma(x_1),\sigma(x_2),[\sigma(x_3),\sigma(x_4),u]_{\hat{\g}}]_{\hat{\g}}-[\sigma(x_3),\sigma(x_4),[\sigma(x_1),\sigma(x_2),u]_{\hat{\g}}]_{\hat{\g}}\\
&&-[\sigma[x_1,x_2,x_3]_{\g},\sigma(x_4),u]_{\hat{\g}}-[\sigma(x_3),\sigma[x_1,x_2,x_4]_{\g},u]_{\hat{\g}}\\
&=&[\sigma(x_1),\sigma(x_2),[\sigma(x_3),\sigma(x_4),u]_{\hat{\g}}]_{\hat{\g}}-[\sigma(x_3),\sigma(x_4),[\sigma(x_1),\sigma(x_2),u]_{\hat{\g}}]_{\hat{\g}}\\
&&-[[\sigma(x_1),\sigma(x_2),\sigma(x_3)]_{\hat{\g}}+\underbrace{\sigma[x_1,x_2,x_3]_{\g}-[\sigma(x_1),\sigma(x_2),\sigma(x_3)]_{\hat{\g}}}_{\in V},\sigma(x_4),u]_{\hat{\g}}\\
&&-[\sigma(x_3),[\sigma(x_1),\sigma(x_2),\sigma(x_4)]_{\hat{\g}}+\underbrace{\sigma[x_1,x_2,x_4]_{\g}-[\sigma(x_1),\sigma(x_2),\sigma(x_4)]_{\hat{\g}}}_{\in V},u]_{\hat{\g}}\\
&=&0.
\end{eqnarray*}
Similarly, we have
\begin{eqnarray*}
\rho(x_{1},[x_{2},x_{3},x_{4}]_\g)u-\rho(x_{3},x_{4})\rho(x_{1},x_{2})u-\rho(x_{2},x_{4})\rho(x_{1},x_{3})u
+\rho(x_{2},x_{3})\rho(x_{1},x_{4})u=0.
\end{eqnarray*}
Thus, $\rho$ is a representation of the $3$-Lie algebra of $(\g,[\cdot,\cdot,\cdot]_\g).$
By similar calculations, we can deduce that $\mu$ is a representation of the $3$-Lie algebra of $(\g,\{\cdot,\cdot,\cdot\}_\g)$ and \eqref{eq:COM-Representation-1}-\eqref{eq:COM-Representation-2} hold.  Therefore, $(V;\rho,\mu)$ is a representation of the compatible $3$-Lie algebra $(\g,[\cdot,\cdot,\cdot]_\g,\{\cdot,\cdot,\cdot\}_\g).$

Let $\sigma':\g\rightarrow\hat{\g}$ be another section, and $(V;\rho',\mu')$ be the corresponding representation of the compatible $3$-Lie algebra $(\g,[\cdot,\cdot,\cdot]_{\g},\{\cdot,\cdot,\cdot\}_{\g}).$
Since $\p(\sigma(x)-\sigma'(x))=x-x=0,$ it follows that $\sigma(x)-\sigma'(x)\in \Ker(\p)\cong V.$
Also by the abelian condition \eqref{eq:ac}, we have
\begin{eqnarray*}
{}(\rho'(x_1,x_2)-\rho(x_1,x_2))u&=&[\sigma'(x_1),\sigma'(x_2),u]_{\hat{\g}}-[\sigma(x_1),\sigma(x_2),u]_{\hat{\g}}=0,\\
{}(\mu'(x_1,x_2)-\mu(x_1,x_2))u&=&\{\sigma'(x_1),\sigma'(x_2),u\}_{\hat{\g}}-\{\sigma(x_1),\sigma(x_2),u\}_{\hat{\g}}=0,
\end{eqnarray*}
which implies that the representation $(V;\rho,\mu)$ of the compatible $3$-Lie algebra $\g$ does not depend on the choice of linear sections.

Suppose that $\hat{\g}_1$ and $\hat{\g}_2$ are isomorphic abelian extensions, and $\theta:\hat{\g}_1\longrightarrow \hat{\g}_2$ is the
compatible $3$-Lie algebra homomorphism satisfying $\theta\circ \id_1=\id_2, \p_1=\p_2\circ \theta.$ Choose linear sections $\sigma$ and $\sigma'$ of $\p_1$ and $\p_2$ respectively, we get $\p_2\theta\sigma(x)=\p_1\sigma(x)=x=\p_2\sigma'(x),$ then $\theta\sigma(x)-\sigma'(x)\in \Ker(\p_2)\cong V.$
Thus,   by the abelian condition \eqref{eq:ac}, we have
\begin{eqnarray*}
[\sigma'(x_1),\sigma'(x_2),u]_{\hat{\g}_2}&=&[\theta\sigma(x_1),\theta\sigma(x_2),u]_{\hat{\g}_2}=\theta[\sigma(x_1),\sigma(x_2),u]_{\hat{\g}_1}=[\sigma(x_1),\sigma(x_2),u]_{\hat{\g}_1};\\
\{\sigma'(x_1),\sigma'(x_2),u\}_{\hat{\g}_2}&=&\{\theta\sigma(x_1),\theta\sigma(x_2),u\}_{\hat{\g}_2}=\theta\{\sigma(x_1),\sigma(x_2),u\}_{\hat{\g}_1}=\{\sigma(x_1),\sigma(x_2),u\}_{\hat{\g}_1}.
\end{eqnarray*}
Therefore, isomorphic abelian extensions give the same $\rho$ and $\mu$.
The proof is finished.
\end{proof}

Let $\sigma:\g\rightarrow \hat{\g}$ be a linear section of the abelian extension. We define $\omega_1,\omega_2\in \Hom(\wedge^3\g, V)$ by
\begin{eqnarray}
  \label{eq:str1}\omega_1(x,y,z)&=&[\sigma(x),\sigma(y),\sigma(z)]_{\hat{\g}}-\sigma[x,y,z]_{\g},\\
  \label{eq:str2}\omega_2(x,y,z)&=&\{\sigma(x),\sigma(y),\sigma(z)\}_{\hat{\g}}-\sigma\{x,y,z\}_{\g},
\quad \forall x,y,z\in\g.
\end{eqnarray}

\begin{pro}\label{cohomological-1}
With the above notations,  $(\omega_1,\omega_2)\in\huaC_{\cC}^{2}(\g,V)$ is a $2$-cocycle of the compatible $3$-Lie algebra $(\g,[\cdot,\cdot,\cdot]_\g,\{\cdot,\cdot,\cdot\}_\g)$ with coefficients in $V$,  where the representation $\rho$ and $\mu$ are given by \eqref{eq:res3} and \eqref{eq:res4}. Moreover, its cohomological
class does not depend on the choice of sections.

\end{pro}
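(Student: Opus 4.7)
My plan is to verify each component of $\delta^2(\omega_1,\omega_2)=0$ separately and then check the independence of the cohomology class from $\sigma$.

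First, I would check that $\omega_1,\omega_2$ really take values in $V$: since $\p$ is a compatible $3$-Lie algebra homomorphism and $\p\circ\sigma=\Id_\g$, applying $\p$ to \eqref{eq:str1}--\eqref{eq:str2} gives $0$, so the images lie in $\Ker\p\cong V$. Next, for the pure cocycle conditions $\dM^2_{\pi_1+\rho}\omega_1=0$ and $\dM^2_{\pi_2+\mu}\omega_2=0$, I would start from the Fundamental Identity \eqref{eq:jacobi1} for $[\cdot,\cdot,\cdot]_{\hat{\g}}$ evaluated on $\sigma(x_1),\dots,\sigma(x_5)$, substitute $[\sigma(x),\sigma(y),\sigma(z)]_{\hat{\g}} = \sigma[x,y,z]_\g + \omega_1(x,y,z)$ wherever a triple bracket of $\sigma$'s appears, and use the abelian condition \eqref{eq:ac} to kill any term with two or more factors from $V$. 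What remains, after identifying $[\sigma(x),\sigma(y),u]_{\hat{\g}}=\rho(x,y)u$ and using skew-symmetry, will be exactly the $3$-Lie cocycle formula $\dM^2_{\pi_1+\rho}\omega_1=0$. The same procedure applied to $\{\cdot,\cdot,\cdot\}_{\hat{\g}}$ gives $\dM^2_{\pi_2+\mu}\omega_2=0$.

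The mixed component $\dM^2_{\pi_2+\mu}\omega_1 + \dM^2_{\pi_1+\rho}\omega_2=0$ is the main obstacle; here I would start from the compatibility relation \eqref{eq:cl} for $[\cdot,\cdot,\cdot]_{\hat{\g}}$ and $\{\cdot,\cdot,\cdot\}_{\hat{\g}}$ applied to $\sigma(x_1),\dots,\sigma(x_5)$, then expand each nested bracket of $\sigma$'s via \eqref{eq:str1}--\eqref{eq:str2} into a sum of $\sigma[\cdot,\cdot,\cdot]_\g+\omega_1(\cdot,\cdot,\cdot)$ (respectively with $\omega_2$). Terms involving $\sigma$ only reproduce the compatibility identity in $\g$ and cancel; terms with two $\omega$'s or two $V$-factors vanish by \eqref{eq:ac}. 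The surviving linear-in-$\omega$ terms, after converting $[\sigma(\cdot),\sigma(\cdot),v]_{\hat{\g}}=\rho(\cdot,\cdot)v$ and $\{\sigma(\cdot),\sigma(\cdot),v\}_{\hat{\g}}=\mu(\cdot,\cdot)v$, assemble precisely into $\dM^2_{\pi_2+\mu}\omega_1(x_1,\dots,x_5)+\dM^2_{\pi_1+\rho}\omega_2(x_1,\dots,x_5)$, proving the required identity. Together with the two pure conditions, this gives $\delta^2(\omega_1,\omega_2)=0$.

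For independence of the cohomology class, let $\sigma'$ be another linear section and set $\phi:=\sigma'-\sigma\in\Hom(\g,V)$, which lands in $V$ because $\p\circ(\sigma'-\sigma)=0$. Let $\omega_1',\omega_2'$ be the cochains associated to $\sigma'$. Expanding
\[
[\sigma'(x),\sigma'(y),\sigma'(z)]_{\hat{\g}} = [\sigma(x)+\phi(x),\sigma(y)+\phi(y),\sigma(z)+\phi(z)]_{\hat{\g}}
\]
and using \eqref{eq:ac} to discard all terms with at least two $V$-entries, I get
\[
\omega_1'(x,y,z)-\omega_1(x,y,z)=\rho(y,z)\phi(x)+\rho(z,x)\phi(y)+\rho(x,y)\phi(z)-\phi[x,y,z]_\g,
\]
which is precisely $(\dM^1_{\pi_1+\rho}\phi)(x,y,z)$. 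The analogous computation for the second bracket gives $\omega_2'-\omega_2=\dM^1_{\pi_2+\mu}\phi$. Hence $(\omega_1',\omega_2')-(\omega_1,\omega_2)=\delta^1\phi$, so the two cocycles define the same class in $\huaH^2(\g;V)$, completing the proof.
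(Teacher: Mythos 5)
Your proposal is correct and follows essentially the same route as the paper: the pure components of the cocycle condition come from the Fundamental Identity of $[\cdot,\cdot,\cdot]_{\hat{\g}}$ (resp.\ $\{\cdot,\cdot,\cdot\}_{\hat{\g}}$) on sections, the mixed component from the compatibility relation \eqref{eq:cl} in $\hat{\g}$, and the class-independence from writing $\sigma'=\sigma+\tau$ and recognizing the difference as $\delta^1\tau$. The only difference is that you spell out explicitly (the origin of the mixed condition and that $\omega_i$ lands in $\Ker\,\p\cong V$) what the paper compresses into a ``similarly.''
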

\begin{proof}
Let $(\mathfrak{g},[\cdot,\cdot,\cdot]_{\g},\{\cdot,\cdot,\cdot\}_{\g})$ be a compatible $3$-Lie algebra with $\pi_1(x_1,x_2,x_3)=[x_1,x_2,x_3]_{\g}$, $\pi_2(x_1,x_2,x_3)=\{x_1,x_2,x_3\}_{\g}.$
Since $(\g,[\cdot,\cdot,\cdot]_{\hat{\g}},\{\cdot,\cdot,\cdot\}_{\hat{\g}})$ is a compatible $3$-Lie algebra,
we have
\begin{eqnarray*}
&&[\sigma(x_1),\sigma(x_2),[\sigma(x_3),\sigma(x_4),\sigma(x_5)]_{\hat{\g}_1}]_{\hat{\g}_1}\\
&=&[[\sigma(x_1),\sigma(x_2),\sigma(x_3)]_{\hat{\g}_1},\sigma(x_4),\sigma(x_5)]_{\hat{\g}_1}+[\sigma(x_3),[\sigma(x_1),\sigma(x_2),\sigma(x_4)]_{\hat{\g}_1},\sigma(x_5)]_{\hat{\g}_1}\\
&&+[\sigma(x_3),\sigma(x_4),[\sigma(x_1),\sigma(x_2),\sigma(x_5)]_{\hat{\g}_1}]_{\hat{\g}_1}.
\end{eqnarray*}
By \eqref{eq:res3}-\eqref{eq:str2}, we get
\begin{eqnarray*}
 &&\omega_1(x_1,x_2,[x_3,x_4,x_5]_{\g})-\omega_1([x_1,x_2,x_3]_{\g},x_4,x_5)-\omega_1(x_3,[x_1,x_2,x_4]_{\g},x_5)\\
\nonumber&&-\omega_1(x_3,x_4,[x_1,x_2,x_5]_{\g})+\rho(x_1,x_2)\omega_1(x_3,x_4,x_5)-\rho(x_3,x_4)\omega_1(x_1,x_2,x_5)\\
\nonumber&&-\rho(x_4,x_5)\omega_1(x_1,x_2,x_3)-\rho(x_5,x_3)\omega_1(x_1,x_2,x_4)=0,
\end{eqnarray*}
which implies that $\dM^2_{\pi_1+\rho}\omega_1=0.$ Similarly, we can deduce that
$$\dM^2_{\pi_2+\mu}\omega_1+\dM^2_{\pi_1+\rho}\omega_2=0\quad \mbox{and}\quad\dM^2_{\pi_2+\mu}\omega_2=0.$$
Then we have $\delta^2(\omega_1,\omega_2)=0,$ i.e.  $(\omega_1,\omega_2)\in\huaC_{\cC}^{2}(\g,V)$ is a $2$-cocycle of the compatible $3$-Lie algebra $(\g,[\cdot,\cdot,\cdot]_\g,\{\cdot,\cdot,\cdot\}_\g)$ with coefficients in $(V;\rho,\mu).$

Let $\sigma':\g\rightarrow\hat{\g}$ be another section of the abelian extension and $(\omega_1',\omega_2')$ be the associated $2$-cocycle.
Assume that $\sigma'=\sigma+\tau$ for $\tau\in \Hom(\g,V)$. Then we have
\begin{eqnarray*}
  (\omega_1'-\omega_1)(x,y,z)
  &=&[\sigma'(x),\sigma'(y),\sigma'(z)]_{\hat{\g}}-\sigma'
  [x,y,z]_{\g}-[\sigma(x),\sigma(y),\sigma(z)]_{\hat{\g}}+\sigma[x,y,z]_{\g}\\
  &=&[\sigma(x),\sigma(y),\tau(z)]_{\hat{\g}}+[\tau(x),\sigma(y),\sigma(z)]_{\hat{\g}}+[\sigma(x),\tau(y),\sigma(z)]_{\hat{\g}}-\tau([x,y,z]_{\g})\\
  &=&\rho(x,y)\tau(z)+\rho(y,z)\tau(x)+\rho(z,x)\tau(y)-\tau([x,y,z]_{\g})\\
  &=&\dM^1_{\pi_1+\rho}\tau(x,y,z),\\
  (\omega_2'-\omega_2)(x,y,z)&=&\{\sigma'(x),\sigma'(y),\sigma'(z)\}_{\hat{\g}}-\sigma'\{x,y,z\}_{\g}-
  \{\sigma(x),\sigma(y),\sigma(z)\}_{\hat{\g}}+\sigma\{x,y,z\}_{\g}\\
&=&\{\sigma(x),\sigma(y),\tau(z)\}_{\hat{\g}}+\{\tau(x),\sigma(y),\sigma(z)\}_{\hat{\g}}+\{\sigma(x),\tau(y),\sigma(z)\}_{\hat{\g}}-\tau(\{x,y,z\}_{\g})\\
  &=&\mu(x,y)\tau(z)+\mu(y,z)\tau(x)+\mu(z,x)\tau(y)-\tau([x,y,z]_{\g})\\
&=&\dM^1_{\pi_2+\mu}\tau(x,y,z),
\end{eqnarray*}
which implies that $(\omega_1',\omega_2')-(\omega_1,\omega_2)=\delta^1\tau$. Thus $(\omega_1',\omega_2')$ and $(\omega_1,\omega_2)$ are in the same cohomology class.
\end{proof}

By choosing a linear section $\sigma:\g\rightarrow \hat{\g}$,      we can transfer the compatible $3$-Lie algebra structure on $\hat{\g}$ to that on
$\g\oplus V$, for which we denote by $[\cdot,\cdot,\cdot]_{\rho,\omega_1}$ and $\{\cdot,\cdot,\cdot\}_{\mu,\omega_2}$, where $\rho,\mu,\omega_1,\omega_2$ are given by \eqref{eq:res3}-\eqref{eq:str2} respectively. More precisely, $[\cdot,\cdot,\cdot]_{\rho,\omega_1}$ and $\{\cdot,\cdot,\cdot\}_{\mu,\omega_2}$ are given by
\begin{eqnarray}
\label{eq:w-bracket1}
[x+u,y+v,z+w]_{\rho,\omega_1}&=&[x,y,z]_{\g}+\rho(x,y)w+\rho(y,z)u+\rho(z,x)v+\omega_1(x,y,z),\\
\label{eq:w-bracket2}\{x+u,y+v,z+w\}_{\mu,\omega_2}&=&\{x,y,z\}_{\g}+\mu(x,y)w+\mu(y,z)u+\mu(z,x)v+\omega_2(x,y,z),
\end{eqnarray}
for $x,y,z\in \g, u,v,w\in V.$

In the sequel, we only consider abelian extensions of the form $$\g\oplus_{\rho,\mu,\omega_1,\omega_2} V:=(\g\oplus V,[\cdot,\cdot,\cdot]_{\rho,\omega_1},\{\cdot,\cdot,\cdot\}_{\mu,\omega_2}).$$

Now we are ready to classify abelian extensions of compatible 3-Lie algebras using the second cohomology group.

\begin{thm}
Two abelian extensions of $3$-Lie algebras $\g\oplus_{\rho,\mu,\omega_1,\omega_2} V$ and $ \g\oplus_{\rho,\mu,\omega_1',\omega_2'} V$ are isomorphic if and only if~~$2$-cocycles $(\omega_1,\omega_2)$ and $(\omega_1',\omega_2')$  are in the same cohomology class.
\end{thm}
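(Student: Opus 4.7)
The plan is to prove both directions by a careful analysis of any compatible $3$-Lie algebra isomorphism $\theta:\g\oplus_{\rho,\mu,\omega_1,\omega_2}V\to\g\oplus_{\rho,\mu,\omega_1',\omega_2'}V$ that fits in the commutative diagram \eqref{diagram1}. Since $\theta\circ\id_1=\id_2$ and $\p_2\circ\theta=\p_1$, the map $\theta$ must restrict to the identity on $V$ and preserve the projection to $\g$. Therefore $\theta$ has the form
\begin{equation*}
\theta(x+u)=x+\tau(x)+u,\qquad\forall~x\in\g,\;u\in V,
\end{equation*}
for a uniquely determined linear map $\tau:\g\to V$. Conversely, any such $\tau$ yields a linear bijection $\theta$ of this form satisfying the diagram.

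For the forward direction, I would translate the condition that $\theta$ is a homomorphism with respect to $[\cdot,\cdot,\cdot]_{\rho,\omega_1}\to[\cdot,\cdot,\cdot]_{\rho,\omega_1'}$. Expanding both sides using \eqref{eq:w-bracket1}, the $\g$-component gives a tautology, while the $V$-component yields exactly
\begin{equation*}
(\omega_1-\omega_1')(x,y,z)=\rho(x,y)\tau(z)+\rho(y,z)\tau(x)+\rho(z,x)\tau(y)-\tau([x,y,z]_\g)=\dM^1_{\pi_1+\rho}\tau(x,y,z).
\end{equation*}
The identical computation with $\{\cdot,\cdot,\cdot\}_{\mu,\omega_2}$ and $\{\cdot,\cdot,\cdot\}_{\mu,\omega_2'}$ produces $\omega_2-\omega_2'=\dM^1_{\pi_2+\mu}\tau$. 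By the definition of $\delta^1$ given in Section \ref{sec:cohomology-gc}, this means precisely $(\omega_1-\omega_1',\omega_2-\omega_2')=\delta^1\tau$, so $(\omega_1,\omega_2)$ and $(\omega_1',\omega_2')$ represent the same class in $\huaH^2(\g;V)$.

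For the backward direction, assume $(\omega_1-\omega_1',\omega_2-\omega_2')=\delta^1\tau$ for some $\tau\in\Hom(\g,V)$, and define $\theta(x+u)=x+\tau(x)+u$. Clearly $\theta$ is a linear bijection making \eqref{diagram1} commute. It remains to verify that $\theta$ is compatible with both brackets, which is the reverse of the computation above: the equality $(\omega_1-\omega_1')(x,y,z)=\dM^1_{\pi_1+\rho}\tau(x,y,z)$ is exactly what is needed to match $\theta([x+u,y+v,z+w]_{\rho,\omega_1})$ with $[\theta(x+u),\theta(y+v),\theta(z+w)]_{\rho,\omega_1'}$, and similarly for the second bracket.

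There is no real obstacle here: the entire proof reduces to expanding the bracket equalities \eqref{eq:w-bracket1}--\eqref{eq:w-bracket2} under the ansatz $\theta(x+u)=x+\tau(x)+u$ and matching the $V$-components. The only point requiring some care is to confirm that the two compatibility conditions come out completely decoupled, which works because $\rho$ only appears in the first bracket and $\mu$ only in the second, so the cocycle equations for $\omega_1-\omega_1'$ and $\omega_2-\omega_2'$ separate cleanly into the two components of $\delta^1\tau$.
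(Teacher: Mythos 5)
Your proposal is correct and follows essentially the same route as the paper: reduce $\theta$ to the form $\theta(x+u)=x+\tau(x)+u$ via the commutative diagram, expand both brackets, and read off $(\omega_1-\omega_1',\omega_2-\omega_2')=\delta^1\tau$ from the $V$-components, with the converse obtained by running the computation backwards. Your write-up is in fact slightly more complete than the paper's, since you justify the form of $\theta$ from the diagram conditions and spell out the converse, which the paper leaves to the reader.
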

\begin{proof}
Let $\g\oplus_{\rho,\mu,\omega_1,\omega_2} V$ and $ \g\oplus_{\rho,\mu,\omega_1',\omega_2'} V$ be isomorphic abelian extensions,
 and $\theta:\g\oplus_{\rho,\mu,\omega_1,\omega_2}V\rightarrow \g\oplus_{\rho,\mu,\omega_1',\omega_2'} V$ be the corresponding isomorphism.
 Then there exist $\tau:\g\rightarrow V$ such that
$$\theta(x+u)=x+\tau(x)+u,\quad \forall x\in\g,u\in V.$$
For all $x,y,z\in\g,$ we have
\begin{eqnarray*}
\theta[x,y,z]_{\rho,\omega_1}
=\theta([x,y,z]_{\g}+\omega_1(x,y,z))
=[x,y,z]_{\g}+\tau([x,y,z]_{\g})+\omega_1(x,y,z),
\end{eqnarray*}
and
\begin{eqnarray*}
[\theta(x),\theta(y),\theta(z)]_{\rho,\omega_1'}
&=&[x+\tau(x),y+\tau(y),z+\tau(z)]_{\rho,\omega_1'}\\
&=&[x,y,z]_{\g}+\omega_1'(x,y,z)+\rho(x,y)\tau(z)+\rho(y,z)\tau(x)+\rho(z,x)\tau(y).
\end{eqnarray*}
Therefore, by $ \theta[x,y,z]_{\rho,\omega_1}=[\theta(x),\theta(y),\theta(z)]_{\rho,\omega_1'}$, we get
\begin{eqnarray*}
(\omega_1-\omega_1')(x,y,z)=\rho(x,y)\tau(z)+\rho(y,z)\tau(x)+\rho(z,x)\tau(y)-\tau([x,y,z]_{\g})=\dM^1_{\pi_1+\rho}\tau(x,y,z),
\end{eqnarray*}

Similarly, we have
\begin{eqnarray*}
(\omega_2-\omega_2')(x,y,z)=\mu(x,y)\tau(z)+\mu(y,z)\tau(x)+\mu(z,x)\tau(y)-\tau(\{x,y,z\}_{\g})=\dM^1_{\pi_2+\mu}\tau(x,y,z),
\end{eqnarray*}
which means that $(\omega_1-\omega_1',\omega_2-\omega_2')=\delta^1\tau$
Therefore, $(\omega_1,\omega_2)$ and $(\omega_1',\omega_2')$ are in the same cohomology class.

The converse part can be proved similarly, and we omit details.
\end{proof}

 \end{document}